\renewcommand{\MR}[1]{\href{http://www.ams.org/mathscinet-getitem?mr=#1}{MR#1}}
\numberwithin{equation}{section}
\newcommand{\Span}{\mathrm{Span}}
\newcommand{\Nor}{\mathrm{Nor}}
\newcommand{\C}{\mathbb{C}}
\newcommand{\cA}{\mathcal{A}}
\newcommand{\cT}{\mathcal{T}}
\newcommand{\cO}{\mathcal{O}}
\newcommand{\rV}{\mathrm{v}}
\newcommand{\cR}{\mathcal{R}}
\newcommand{\rS}{\mathrm{s}}
\newcommand{\Ms}{\mathrm{MS}}
\newcommand{\Mv}{\mathrm{MV}}
\newcommand{\asX}{\langle X_n \rangle}
\newcommand{\asoX}{\langle X \rangle}
\newcommand{\asY}{\langle Y_N \rangle}
\newcommand{\asZ}{\langle Z \rangle}
\newcommand{\asXY}{\langle X;Y \rangle}
\newcommand{\p}{\mathbb{P}}
\newcommand{\N}{\mathbb{N}}
\def\gkdim{\operatorname {GK dim}}
\def\gldim{\operatorname {gl\,dim}}
\def\deg{\operatorname {deg}}
\theoremstyle{plain}
\newtheorem{thm}{Theorem}[section]
\newtheorem{pro}[thm]{Proposition}
\newtheorem{lem}[thm]{Lemma}
\newtheorem{cor}[thm]{Corollary}
\theoremstyle{definition}
\newtheorem{notaz}[thm]{Notation}
\newtheorem{ex}[thm]{Example}
\newtheorem{facts}[thm]{Facts}
\newtheorem{convention}[thm]{Convention}
\newtheorem{connotation}[thm]{Notation}
\newtheorem{dfn}[thm]{Definition}
\newtheorem{rmk}[thm]{Remark}
\newcommand{\im}[0]{\operatorname{im}}
\newcommand{\q}{\mathbf{q}}
\newcommand{\g}{\mathbf{g}}
\newcommand{\LM}{\mathbf{LM}}
\newcommand{\mas}{multiplicatively anti-symmetric\ }
\title[Veronese and Segre morphisms]{Veronese and Segre morphisms between non-commutative projective spaces}
\keywords{Quadratic algebras, Noncommutative projective spaces, Veronese subalgebras, Veronese maps, Segre maps, Gröbner--Shirshov bases}
\subjclass{Primary 16S37, 16S38, 16S15, 16S10, 81R60}
\author{Francesca Arici}
\address{Mathematical Institute, Leiden University, P.O. Box 9512, 2300 RA
Leiden, the Netherlands}
\email{f.arici@math.leidenuniv.nl}
\author{Francesco Galuppi}
\address{Max Planck Institute for Mathematics in the Sciences, Inselstraße 22, 04013 Leipzig, Germany}
\email{galuppi@mis.mpg.de}
\author{Tatiana Gateva-Ivanova}
\address{American University in
Bulgaria, 2700 Blagoevgrad, Bulgaria} \email{tatyana@aubg.edu}
\thanks{FA was partially funded by the Max Planck Institute for Mathematics in the Sciences, (MPI MiS), Leipzig, and the Netherlands Organisation of Scientific Research (NWO) under the VENI grant 016.192.237.
TGI was partially supported by the Max Planck Institute for Mathematics, (MPIM), Bonn, by MPI MiS, Leipzig,
and by Grant KP-06 N 32/1 of 07.12.2019 of the Bulgarian National Science Fund.
}
\begin{document}
\date{\today}
\begin{abstract}
We study Veronese and Segre morphisms between non-commutative projective spaces. We compute finite reduced Gr\"obner bases for their kernels, and we compare them with their analogues in the commutative case.\end{abstract}
\maketitle

\section{Introduction}\label{sec:intro}
In this work, we describe Veronese and Segre morphisms for a class of non-commutative quadratic algebras which have permeated the literature under different names. They appeared as \emph{quantum affine spaces} in \cite[Section 1 and
Section 4]{Ma87}, and more recently as \emph{non-commutative projective spaces} in the work \cite{AuKaOr08} on mirror symmetry, as well as in the study of deformations of toric
varieties \cite{CiLaSz11,CiLaSz13}.

Motivated by the interpretation of morphisms between non-commutative algebras as "maps between non-commutative spaces", we consider here non-commutative analogues of the Veronese and Segre embeddings, two fundamental maps that play pivotal roles not only in
classical algebraic geometry, but also in applications to other fields of mathematics.

The $d$-Veronese map is the non-degenerate embedding of the projective space $\p^n$ via the very ample line bundle $\cO(d)$. Its image, called the Veronese variety, has a capital
importance in algebraic geometry. Just to mention an example, every projective variety is isomorphic to the intersection of a Veronese variety and a linear space (see
\cite[Exercise 2.9]{Harris}). The Segre map is the embedding of $\p^m\times\p^n$ via the very ample line bundle $\cO(1,1)$. It is used in projective geometry to endow the Cartesian product of two projective spaces with the structure of a projective variety. In quantum mechanics and quantum information theory, it is a natural mapping for describing non-entangled states (see \cite[Section 4.3]{BeZy}).  Both are studied for the theory of tensor decomposition \cite[Section 4.3]{Lan}, as the image of the Segre morphism is the locus of rank 1 tensors,
 while the image of the Veronese morphism plays
a similar role for symmetric tensors. Moreover, these constructions are central
in the field of algebraic statistics: the variety of moments of a Gaussian random variable is a Veronese variety
(see \cite[Section 6]{AFS}), while independence models are encoded by Segre varieties (see \cite{GaStiSturm05}).

 The natural problem of finding
non-commutative counterparts of those fundamental constructions has been addressed from different perspectives, for instance in \cite{V92} and
\cite{Sm03}.

In this work, we study the properties of these maps and of the corresponding algebras from the point of view of the theory of  Gr\"obner bases. In classical
algebraic geometry, a variety $V$ is completely determined by its defining ideal. When $V$ is the image of a variety morphism $f$,
the ideal of $V$ is the kernel of the algebra morphism corresponding to $f$. Computing a Gr\"obner basis for the defining ideal can provide
valuable information about the
properties of $V$. With this motivation in mind, we are interested in computing Gr\"obner bases for the kernels of the non-commutative Veronese and Segre morphisms.

The paper is structured as follows. In Section~\ref{sec:pre}  we recall some basics of
the theory of Gr\"obner bases for ideals in the free associative algebra.
Our Lemma \ref{lem:diamondquadratic} gives a criterion for quadratic Gr\"obner bases, which is crucial for the proof of our main results, Theorems \ref{thm:Veronese_ker} and
\ref{thm:Segre_ker}.
In Section~\ref{sec:nc_projective_spaces}
we present the quadratic algebras $\cA=\cA^n_\q$,
called \emph{quantum spaces}, or
\emph{non-commutative projective spaces},
and we recall some of their basic properties.
 In Section
\ref{sec:dVeronese} we analyse their $d$-Veronese subalgebras. The main result of the section is
 Theorem \ref{thm:d-Veronese_relations}, which gives a presentation of the $d$-Veronese subalgebra
 in terms of generators and quadratic relations.
 In Section~\ref{sec:Veronese}
 we introduce and study non-commutative analogues of the Veronese maps for non-commutative projective spaces.
 We present a modification of
the theory of Gr\"obner
bases for ideals in a quantum space 
and find explicitly
a Gr\"obner basis for the kernel of the Veronese map in Theorem  \ref{thm:Veronese_ker}.
Using a similar approach and methods, in Section~\ref{Sec:Segremap} we introduce and study non-commutative analogues of Segre maps and Segre products.
Theorem \ref{thm:Segre_ker} describes the reduced Gr\"{o}bner basis for the kernel of the Segre map.
Finally, in Section \ref{sec:examples} we present various examples that illustrate our results.

\subsection*{Acknowledgements} FA and FG would like to thank the Max Planck Institute for Mathematics in Bonn for their hospitality during a visit to TGI in May 2019. We would also like to thank Bernd Sturmfels for encouraging this line of research and Giovanni Landi for discussions on toric non-commutative manifolds.


\section{Preliminaries}
\label{sec:pre}
 We start with notation, conventions, and facts which will be used throughout the paper,
and recall some basics on Gr\"obner bases for ideals in the free associative algebra. Lemma \ref{lem:diamondquadratic} gives a criterion
for quadratic Gr\"obner bases which is particularly useful in our settings.

\subsection{Basic notations and conventions}

Throughout the paper $X_n= \{x_0, \dots, x_n\}$ denotes a non-empty set of indeterminates. To simplify notation, we shall often write $X$ instead of $X_n$. We denote by $\C\langle x_0, \dots, x_n\rangle$ the complex free associative algebra with unit generated by $X_n$, while $\C [X_n]$
denotes the commutative polynomial ring in the variables $x_0,\dots,x_n$.
 $\asX$ is the free monoid generated by $X_n$, where the unit is the empty word, denoted by $1$.

We fix the degree-lexicographic order $<$ on $\asX$, where we set $x_0 < x_1< \dots <x_n$. As usual, $\N$ denotes the set of all positive integers,
and $\N_0$ is the set of all non-negative integers.
Given a non-empty set $F\subset \C \asX$, we write
    $(F)$ for the two-sided ideal of $\C\asX$ generated by $F$.

In more general settings, we shall also consider associative algebras over a field $\textbf{k}$.
Suppose $A= \bigoplus_{m\in\N_0}  A_m$ is a graded $\textbf{k}$-algebra
such that $A_0 =\textbf{k}$, and such that $A$ is finitely generated by elements of positive degree. Recall that its Hilbert function is $h_A(m)=\dim A_m$
and its Hilbert series is the formal series $H_A(t) =\sum_{m\in\N_0}h_{A}(m)t^m$.
In particular, the algebra $\C [X_n]$ of commutative polynomials satisfies
\begin{equation}
\label{eq:hilbert}
h_{\C [X_n]}(d)= \binom{n+d}{n} \quad\mbox{and}\quad  H_{\C [X_n]}= \frac{1}{(1 -t)^{n+1}}.
\end{equation}

We shall use two well-known gradings on the free associative algebra $\C\asX$: the \emph{natural grading by length} and the $\N_0^{n+1}$\emph{-grading}.

Let $X^m$ be the set of all words of length $m$ in $\asoX$. Then
\[\asoX = \bigsqcup_{m\in\N_0}  X^{m},\
X^0 = \{1\},\ \mbox{and} \ \   X^{k}X^{m} \subseteq X^{k+m},\]
so the free monoid $\asoX$ is naturally \emph{graded by length}.

Similarly, the free associative algebra $\C\asoX$ is also graded by length:
\[\C\asoX
 = \bigoplus_{m\in\N_0} \C\asoX_m,\quad \mbox{ where}\ \  \C\asoX_m=\C X^{m}. \]

A polynomial $f\in  \C\asoX$ is \emph{homogeneous of degree $m$} if $f \in \C X^{m}$.
We denote by
\[\cT^n =\cT(X_n) :=\left\lbrace x_0^{\alpha_0}\cdots x_n^{\alpha_n}\in\asX \ \vert \ \alpha_i\in\N_0, i\in\{0,\dots,n\}\right\rbrace\]
the set of ordered monomials (terms) in $\asX$   and by
\[
\cT^n_d = \cT_d(X_n):=\left\lbrace x_0^{\alpha_0}\cdots x_n^{\alpha_n}\in\cT^n \mid\ \sum_{i=0}^n \alpha_i = d \right\rbrace
\]
the set of ordered monomials of length $d$.
It is well known that the cardinality
$|\cT_d(X_n)|$ is given by the Hilbert function (Hilbert polynomial) $h_{\C [X_n]}(d)$ of the polynomial ring in the variables $X_n$:
\begin{equation}
        \label{eq:order-deg-d-monomials}
|\cT_d(X_n)|= \binom{n+d}{n}=h_{\C [X_n]}(d).
\end{equation}

\begin{dfn}
\label{def:multidegreedef}
 A monomial $w \in \asoX $ has
  \emph{multi-degree} $\alpha =(\alpha_0, \dotsc, \alpha_n) \in \N_0^{n+1}$, if $w$,
considered as a commutative term,
can be written as $w = x_0^{\alpha _0}x_1^{\alpha _1}\dotsm x_n^{\alpha _n}$. In this case we write $\deg (w) =\alpha$.
Clearly, $w$ has length $|w|= \alpha _0  +\dotsb + \alpha _n.$
In particular, the unit $1\in \asoX$ has multi-degree $\textbf{0} =(0, \dotsc, 0)$, and $\deg(x_0) = (1,0,\dotsc, 0),   
\dotsc, \deg(x_n)= (0,0,\dotsc, 1)$. For each $\alpha= (\alpha _0, \alpha _1,\dotsc, \alpha _n) \in \N_0^{n+1}$
we define 
\begin{equation}
        \label{eq:multigrading2}
\begin{array}{c}
T_{\alpha}:= x_0^{\alpha _0}x_1^{\alpha _1}\dotsm x_n^{\alpha _n} \in \cT(X_n) \quad \mbox{and}\quad
X_{\alpha}: = \{w \in  \asoX \mid \deg(w)= \alpha \}.
\end{array}
\end{equation}
The free monoid $\asX$ is naturally $\N_0^{n+1}$-graded:
\[\asX = \bigsqcup_{\alpha\in \N_0^{n+1}}  X_{\alpha}, \; \mbox{where}\;
X_{\textbf{0}} = \{1\}, \;\mbox{and }\;  X_{\alpha}X_{\beta}\subseteq X_{\alpha+\beta}.\]

In a similar way, the free associative algebra $\C\langle X_n\rangle$ is also canonically $\N_0^{n+1}$ -graded:
\[\C \asX = \bigoplus_{\alpha \in \N_0^{n+1}} \C\langle X_n\rangle_{\alpha},  \text{ where }  \C\langle X_n\rangle_{\alpha}= \C  X_{\alpha}.\]
\end{dfn}

It follows straightforwardly from (\ref{eq:multigrading2}) that
$X_{\alpha}\cap \cT(X_n)  = \lbrace T_{\alpha} \rbrace$,
for every $\alpha \in \N_0^{n+1}$.
Moreover, every $u \in X_{\alpha} \setminus \{T_{\alpha}\}$ satisfies $u > T_{\alpha}$, i.e., $T_{\alpha}$ is the \emph{minimal element} of $X_{\alpha}$
with respect to the ordering $<$.

\subsection{Gr\"obner bases for ideals in the free associative algebra}
\label{sec:grobner}
In this subsection $\textbf{k}$ is an arbitrary field and $X= X_n=\{x_0,\dotsc,x_n\}$.
Suppose $f \in \textbf{ k}\asoX$ is a nonzero polynomial. Its leading
monomial with respect to $<$ will be denoted by $\LM(f)$.
One has $\LM(f)= u$ if
$f = cu + \sum_{1 \leq i\leq m} c_i u_i$, where
$ c,c_i \in \textbf{k}$, $c \neq 0 $ and $u > u_i\in \asoX$, for every $i\in\{1,\dots,m\}$.

Given a set $F \subseteq \textbf{k} \asoX$ of
non-commutative polynomials, $\LM(F)$ denotes the set
 \[\LM(F) = \{\LM(f) \mid f \in F\}.\]
A monomial $u\in \asoX$ is \emph{normal modulo $F$} if it does not contain any of the monomials $\LM(f)$ as a subword.
 The set of all normal monomials modulo $F$ is denoted by $N(F)$.

Let  $I$ be a two sided graded ideal in $K \asoX$ and let $I_m = I\cap \textbf{k}X^m$.  We shall consider
graded algebras with a minimal presentation. Without loss of generality, we may assume that
$I$ \emph{is generated by homogeneous polynomials of degree $\geq 2$}
and $I = \bigoplus_{m\ge 2}I_m$. Then the quotient
algebra $A = \textbf{k} \asoX/ I$ is finitely generated and inherits its
grading $A=\bigoplus_{m\in\N_0}A_m$ from $ \textbf{k} \asX$. We shall work with
the so-called \emph{normal} $\textbf{k}$-\emph{basis of} $A$.

We say that a monomial $u \in \asX$ is  \emph{normal modulo $I$} if it is normal modulo $\LM(I)$. We set
$N(I):=N(\LM(I))$.
In particular, the free
monoid $\asoX$ splits as a disjoint union
\begin{equation}
\label{eq:X1eq2a}
\asoX=  N(I)\sqcup \LM(I).
\end{equation}
The free associative algebra $\textbf{k} \asoX$ splits as a direct sum of $\textbf{k}$-vector
  subspaces
  $\textbf{k} \asoX \simeq  \Span_{\textbf{k}} N(I)\oplus I$,
and there is an isomorphism of vector spaces
\begin{equation}
A \simeq \Span_{\textbf{k}} N(I).
\end{equation}

We define
\[N(I)_{m}=\{u\in N(I)\mid u\mbox{ has length } m\}.\]
 Then
$A_m \simeq \Span_{\textbf{k}} N(I)_{m}$ for every $m\in\N_0$.

\begin{dfn}
\label{eq: s.f.p.def} Let $I\subset\textbf{k}\asX$ be a two-sided ideal.
\begin{enumerate}
\item A subset
$G \subseteq I$
of monic polynomials is a \emph{Gr\"{o}bner
basis} of $I$ (with respect to the ordering $<$) if
\begin{enumerate}
\item $G$ generates $I$ as a
two-sided ideal, and
\item for every $f \in I$ there exists $g \in G$ such that $\LM(g)$ is a subword of $\LM(f)$, that is
$\LM(f) = a\LM(g)b$,  for some $a, b \in \asoX$.
\end{enumerate}
\item
A Gr\"{o}bner basis $G$ is \emph{minimal} if  the set $G\setminus\{f\}$ is not a Gr\"{o}bner basis of $I$, whenever $f \in G$.

\item
A minimal Gr\"{o}bner basis $G$ of
$I$ is \emph{reduced} if each  $f \in G$  is a linear combination of normal monomials modulo $G\setminus\{f\}$. In this case we say that $f$ is \emph{reduced modulo}
$G\setminus\{f\}$.

\item
If $I$ has a finite Gr\"{o}bner basis $G$, then the algebra $A = \textbf{k}\asoX / (G)$ is
called a \emph{standard finitely presented algebra}, or shortly an \emph{s.f.p. algebra}.
\end{enumerate}
\end{dfn}
It is well-known that every ideal $I$ of $\textbf{k} \asoX$ has a unique reduced
Gr\"{o}bner basis $G_0= G_0(I)$ with respect to $<$. However, $G_0$ may be infinite.
 For more details, we refer the reader to \cite{Latyshev,Mo94, EPS}.

\begin{dfn}\label{def: reductions}
    Let $h_1,\dots,h_s\in \textbf{k} \asoX$  ($h_i = 0$ is also possible).
 For every $i\in\{1,\dots, s\}$, let $w_i \in \asoX$ be a monomial of degree at least 2, such that $w_i>\LM(h_i)$, whenever $h_i \neq 0$, and let
$g_i=w_i-h_i$.
Each $g_i$ is a monic polynomial with $\LM(g_i) = w_i$.
    Let $G = \{g_1\dots, g_s\}\subset \textbf{k}\asoX$ and let $I = (G)$ be the two-sided ideal of $\textbf{k}\asoX$ generated by $G$.
For $u,v\in\asoX$ and for $i\in\{1,\dots,s\}$, we consider the $\textbf{k}$-linear operators $r_{uiv}:\textbf{k}\asX\to \textbf{k}\asX$ called \textit{reductions}, defined on the basis elements $c\in \asX$ by
\[
r_{uiv}(c)=   \begin{cases}
           uh_iv                                 & \text{if } c = uw_iv\\
           c                                     & \text{otherwise}.
                                 \end{cases}
\]
Then the following conditions hold:
\begin{enumerate}
    \item $c - r_{uiv}(\omega) \in I$.
    \item $\LM(r_{uiv}(c)) \leq c$.
    \item More precisely, $\LM(r_{uiv}(c)) < c$ if and only if $c = uw_iv$.
\end{enumerate}
More generally, for $f \in \textbf{k}\asoX$ and for any finite sequence of reductions $r= r_{u_1i_1v_1}\circ \dots \circ r_{u_ti_tv_t}$ one has
\[f\equiv r(f)  (\text{mod}   I)   \text{ and }   \LM(f) \geq \LM(r(f)).\]
A polynomial $f \in \textbf{k}\asX$ is in \emph{normal form (mod $G$)} if none of its monomials contains as a subword any of the $w_i$'s. In particular, the $0$ element is in normal form.
\end{dfn}
The degree-lexicographic ordering $<$ on $\asX$ satisfies the decreasing chain condition, and therefore for every $f \in \textbf{k}\asoX$  one can find a normal form of $f$
by means of a finite sequence of reductions defined via $G$. In general, $f$ may have more than one normal forms (mod $G$).
 It follows from Bergman's Diamond Lemma (see \cite[Theorem 1.2]{Bergman})  that $G$ is a Gr\"{o}bner basis of $I$  if and only if  every $f \in \textbf{k}\asoX$
has a unique normal form (mod $G$), which will be denoted by $ \Nor(f)$. In this case $f \in I$ if and only if $f$ can be reduced to $0$ via a finite sequence of reductions.

\begin{dfn}
\label{dfn:compositions_def} Let $G = \{g_i= w_i -h_i\mid i\in\{1,\dots, s\}\}\subset \textbf{k}\asX$ as in Definition \ref{def: reductions} and let $I=(G)$. Let $u=w_i$ and $v=w_j$ for some $i,j\in\{1,\dots, s\}$ and let $a,b,t \in \asoX \setminus \{1\}$.
\begin{enumerate}
\item
Suppose that $u=ab$, $v= bt$ and let
 $\omega = abt=ut=av$.
 The difference
\[(u,v)_{\omega}= g_it-ag_j= ah_j-h_it\]
 is called \emph{a composition of overlap}.
 Note that $(u,v)_{\omega}\in I$ and $\LM(g_it) =\omega= \LM(ag_j)$,
 so
\[
\LM((u,v)_{\omega}) = \LM(sh_j-h_it)< \omega.\]
The composition of overlap $(u,v)_{\omega}$ is \emph{solvable} if it can be reduced to $0$ by means of a finite sequence of reductions defined via $G$.

\item Suppose that $\omega= w_j =aw_ib$. The \emph{composition of inclusion} corresponding to the pair $(u,\omega)$ is
\[(u,\omega)_{\omega}:= (ag_ib)- g_j =h_j - ah_ib. \]
One has $(u, \omega)_{\omega} \in I$ and $\LM(u, \omega)_{\omega} = \LM(h_j - ah_ib)< \omega$.
The composition of inclusion $(u, \omega)_{\omega}$ is \emph{solvable} if it can be reduced to $0$ by means of a finite sequence of reductions defined via $G$.
\end{enumerate}
\end{dfn}

The lemma below is a modification of the Diamond Lemma and follows easily from Bergman's result \cite[Theorem 1.2]{Bergman}.

\begin{lem}
    \label{lem:diamonlemma}
Let $G = \{w_i -h_i\mid i\in\{1,\dots,s\}\}\subset \textbf{k}\asX$ as in Definition \ref{def: reductions}. Let $I = (G)$ and let $A = \textbf{k}\asX/I.$ Then the following
conditions are equivalent.
\begin{enumerate}
\item
The set
$G$  is a Gr\"{o}bner basis of $I$.
\item All compositions of overlap  and all compositions of inclusion are solvable.
\item Every element $f\in \textbf{k}\asX$ has a unique normal form modulo $G$, denoted by $\Nor_G (f)$.
\item
There is an equality $N(G) = N(I)$, so there is an isomorphism of vector spaces
\[\textbf{k}\asX \simeq I \oplus \textbf{k}N(G).\]
\item The image of $N(G)$ in $A$ is a $\textbf{k}$-basis of $A$.
In this case
$A$ can be identified with the $\textbf{k}$-vector space $\textbf{k}N(G)$, made a $\textbf{k}$-algebra by the multiplication
$a\bullet b: = \Nor(ab).$\end{enumerate}
Suppose furthermore that $G$ consists of homogeneous polynomials. Then $A$ is graded by length and each of the above conditions is equivalent to
\begin{enumerate}
\item[(6)] $\dim A_m = \dim (\textbf{k} N(G)_m) = |N(G)_m|$ for every $m \in\N_0$.
\label{lem:diamonlemma6}
\end{enumerate}
\end{lem}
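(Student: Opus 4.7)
The plan is to reduce the lemma to a direct application of Bergman's Diamond Lemma \cite[Theorem 1.2]{Bergman}: the compositions of Definition~\ref{dfn:compositions_def} translate verbatim into Bergman's overlap and inclusion ambiguities, and ``solvable'' matches his ``resolvable''. Two preliminary observations make the translation safe. First, since the degree-lexicographic order is a well-order on $\asoX$, every reduction sequence terminates, so at least one normal form modulo $G$ exists for each $f \in \textbf{k}\asoX$, and $f - \Nor(f) \in I$ by construction. Second, because $G \subseteq I$ one has $\LM(G) \subseteq \LM(I)$, giving the automatic inclusion $N(I) \subseteq N(G)$.

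The core equivalence is $(1)\Leftrightarrow(4)$. By Definition~\ref{eq: s.f.p.def}, $G$ is a Gröbner basis iff every $u \in \LM(I)$ contains some $\LM(g)$ as a subword, i.e.\ iff $\LM(I) \cap N(G) = \varnothing$. Combined with $N(I) \subseteq N(G)$ and the partition $\asoX = N(I) \sqcup \LM(I)$ from \eqref{eq:X1eq2a}, this is equivalent to $N(G) = N(I)$; coupled with the standard decomposition $\textbf{k}\asoX \simeq I \oplus \Span_{\textbf{k}} N(I)$ recalled earlier in the section, it yields the isomorphism in (4), and the converse direction runs identically. The implication $(4)\Leftrightarrow(5)$ is then immediate: the quotient map identifies $\Span_{\textbf{k}} N(G)$ with $A$ as $\textbf{k}$-vector spaces, and transporting multiplication along this identification produces exactly the product $a \bullet b = \Nor(ab)$.

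The equivalence $(2)\Leftrightarrow(3)$ is Bergman's Diamond Lemma applied to the reduction system of Definition~\ref{def: reductions}. To tie (3) to (4), observe that (3) makes $\Nor_G \colon \textbf{k}\asoX \to \Span_{\textbf{k}} N(G)$ a well-defined $\textbf{k}$-linear map with kernel $I$, giving the direct sum decomposition; conversely, given (4), the projection onto $\Span_{\textbf{k}} N(G)$ along $I$ provides a unique normal form for every $f$, which must agree with any reduction-produced normal form since both differ from $f$ by elements of $I$ lying in $\Span_{\textbf{k}} N(G)$. Finally, for homogeneous $G$ the ideal $I$ inherits the length grading, whence $N(I)_m \subseteq N(G)_m$ and $\dim A_m = |N(I)_m| \leq |N(G)_m|$ for every $m$; condition (6) is therefore equivalent to $N(G)_m = N(I)_m$ in every degree, which is the graded reformulation of (4). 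The main obstacle is essentially dictionary-building: carefully matching Bergman's ``reduction system'', ``ambiguity'' and ``resolvability'' to the objects in Definitions~\ref{def: reductions} and~\ref{dfn:compositions_def}, after which no further computation is required.
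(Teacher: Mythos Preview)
Your proposal is correct and matches the paper's approach: the paper gives no proof of this lemma at all, stating only that it ``is a modification of the Diamond Lemma and follows easily from Bergman's result \cite[Theorem 1.2]{Bergman}.'' You have simply supplied the details of that reduction, and the dictionary you build between the paper's Definitions~\ref{def: reductions}--\ref{dfn:compositions_def} and Bergman's ambiguities/resolvability is exactly what is needed.
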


\begin{cor}
\label{cor:diamonlemma}
Let $G = \{w_i -h_i\mid i\in\{1,\dots,s\}\}\subset \textbf{k}\asX$ as above and let $I=(G)$.
 Let $N(G)$ and $N(I)$
be the corresponding sets of normal monomials in $\textbf{k}\asX$. Then
$N(G) \supseteq   N(I)$, where an equality holds if and only if $G$ is a Gr\"{o}bner basis of $I$.
\end{cor}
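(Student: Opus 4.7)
The plan is to prove the inclusion $N(I)\supseteq $ --- wait, the inclusion asserted is $N(G)\supseteq N(I)$, which is equivalent to saying that every monomial not containing any $\LM(f)$ (for $f\in I$) as a subword also avoids every $w_i=\LM(g_i)$. I would handle the inclusion first and then deduce the equivalence directly from Lemma~\ref{lem:diamonlemma}; no composition or reduction arguments should be needed, since all the real content has already been packaged into that lemma.

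For the inclusion, the key observation is simply $G\subseteq I$, which gives $\LM(G)\subseteq \LM(I)$. Hence if a monomial $u\in\asX$ has none of the elements of $\LM(I)$ as a subword, then \emph{a fortiori} it has none of the $w_i\in\LM(G)$ as a subword. In symbols, $u\in N(I)\Rightarrow u\in N(G)$, giving $N(I)\subseteq N(G)$. This step is essentially a set-theoretic tautology.

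For the equivalence, the forward direction is precisely condition~(4) of Lemma~\ref{lem:diamonlemma}: if $G$ is a Gr\"obner basis then $N(G)=N(I)$. For the converse, I would argue by contraposition using the definition of a Gr\"obner basis. Suppose $G$ is not a Gr\"obner basis. Then there exists a nonzero $f\in I$ such that no $w_i=\LM(g_i)$ appears as a subword of $\LM(f)$; equivalently, $\LM(f)\in N(G)$. On the other hand, $\LM(f)\in \LM(I)$, so by the splitting \eqref{eq:X1eq2a} we have $\LM(f)\notin N(I)$. Thus $N(G)\neq N(I)$, contradicting the assumption. This completes the converse, so the two conditions are equivalent.

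There is no real obstacle here: the statement is a direct packaging of items in the Diamond Lemma together with the elementary monotonicity $\LM(G)\subseteq \LM(I)$. The only thing to be careful about is that the definition of $N(G)$ and $N(I)$ both refer to avoidance of subwords in $\asX$ (not just of the generators as polynomials), so the implication from $\LM(G)\subseteq\LM(I)$ to $N(I)\subseteq N(G)$ is to be read in the subword sense. Once that is clear, the corollary follows in a few lines.
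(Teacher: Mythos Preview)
Your proof is correct and follows exactly the route the paper intends: the corollary is stated without proof in the paper, as an immediate consequence of Lemma~\ref{lem:diamonlemma} and the monotonicity $\LM(G)\subseteq\LM(I)$. Your argument makes this explicit in the natural way, with nothing missing.
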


 It is shown in \cite[Corollary 6.3]{KRW90} that there exist ideals in the free associative algebra $\textbf{k}\langle x_0, \dots, x_n\rangle$
 for which the existence of a finite Gr\"{o}bner basis is an undecidable problem.

 In this paper, we focus on a class of quadratic standard finitely presented algebras $\cA$ known as \emph{non-commutative projective spaces} or \emph{quantum spaces}.
Each such algebra $\cA$ is \emph{strictly ordered} in the sense of \cite[Definition 1.9]{GI91},
so there is a well-defined notion of Gr\"{o}bner basis of a two-sided
 ideal in $\cA$ (cf. \cite[Definition 1.2]{GI91}).
Moreover, every two-sided ideal in $\cA$ has a finite reduced Gr\"{o}bner basis.

\subsection{Quadratic algebras and quadratic Gr\"{o}bner  bases}
\label{subs:quadratic}
As usual, let $X= X_n=\{x_0,\dots,x_n\}$. Let $M$ be a non-empty proper subset of $\{0,\dots,n\}^2$. For every $(j,i)\in M$, let $h_{ji}\in\textbf{k}\asoX$ be either 0 or a homogeneous
 polynomial of degree 2 with
 $\LM (h_{ji})< x_jx_i$. Let
\begin{equation}
\label{defeq:R}
\cR = \{ f_{ji}= x_jx_i - h_{ji}\mid  (j,i) \in M\}  \subset \textbf{k}\asoX.
\end{equation}
Define $I=(\cR)$ and consider the quadratic algebra $A = \textbf{k}\asX/ I$.
As in Subsection \ref{sec:grobner}, let
$N(I)_m = N(I)\cap (X_n)^{m}$ and $N(\cR)_m = N(\cR)\cap (X_n)^{m}$ be the corresponding subsets of normal words of length $m$. By construction, $\cR$ is a $\textbf{k}$-basis for $I_2$, so
\[\dim I_2 = |\cR|= |M|\quad \mbox{and}\quad  N(I)_2 = N(\cR)_2 = X_n^2\setminus \LM(\cR).\]
As vector spaces,
\[
\textbf{k}\asoX = I \oplus \textbf{k}N(I)\ \mbox{and}\ A \cong \textbf{k} N(I).
\]
Moreover, for the canonical grading by length one has
 \[
 (\textbf{k}\asoX)_m = (I)_m \oplus \textbf{k}N(I)_m\ \mbox{and}\ A_m \cong \textbf{k} N(I)_m,
 \]
 for every $m\in\N$.

The following Lemma is crucial for the proofs of several results in the paper.

\begin{lem}
\label{lem:diamondquadratic}
 Let $\cR$ be defined as in \eqref{defeq:R}. The following conditions are equivalent.
\begin{enumerate}
\item
The set
$\cR$  is a (quadratic) Gr\"{o}bner basis of the ideal $I=(\cR)$.
\item
$\dim A_3 = |N(\cR)_3|$.
\item
All ambiguities of overlap determined by $\LM(\cR) = \{x_jx_i\mid (j,i) \in M \}$ are $\cR$-solvable.
\end{enumerate}
In this case $A$ is a \emph{PBW} algebra in the sense of
\cite[Section 5]{priddy}.
\end{lem}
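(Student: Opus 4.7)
The plan is to derive all three equivalences from Bergman's Diamond Lemma as recalled in Lemma \ref{lem:diamonlemma}, exploiting the fact that every element of $\cR$ has leading monomial of length exactly $2$ and that the indexing $(j,i) \in M$ is injective on leading terms. First I would observe that $\cR$ admits no nontrivial compositions of inclusion: if $\LM(f_{j'i'}) = a \LM(f_{ji}) b$ with both sides of length $2$, then necessarily $a = b = 1$ and $(j,i) = (j',i')$. Every composition of overlap is instead produced by a pair $x_j x_i, x_i x_k \in \LM(\cR)$ and equals the degree-$3$ element $x_j h_{ik} - h_{ji} x_k \in I_3$. Hence condition (3) exhausts all ambiguities and, by Lemma \ref{lem:diamonlemma}(2), is equivalent to (1).

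For the implication (1) $\Rightarrow$ (2) I would invoke Corollary \ref{cor:diamonlemma}, which gives $N(\cR) = N(I)$ whenever $\cR$ is a Gr\"obner basis, combined with Lemma \ref{lem:diamonlemma}(6) applied in degree $3$ to conclude $\dim A_3 = |N(I)_3| = |N(\cR)_3|$. The substantive converse (2) $\Rightarrow$ (3) is the heart of the argument and I would prove it by contrapositive. The preliminary step is to show that the natural $\textbf{k}$-linear map $\textbf{k} N(\cR)_3 \to A_3$ is always surjective: any word in $X_n^3 \setminus N(\cR)_3$ contains some $x_j x_i \in \LM(\cR)$ as a subword, and the replacement $x_j x_i \mapsto h_{ji}$ yields, modulo $I_3$, a $\textbf{k}$-linear combination of words with strictly smaller leading monomial. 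Since the degree-lexicographic order $<$ is well-founded, iterating terminates in a representative in $\textbf{k} N(\cR)_3$. Therefore $\dim A_3 \leq |N(\cR)_3|$, and condition (2) is equivalent to the vanishing $I_3 \cap \textbf{k} N(\cR)_3 = \{0\}$.

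Now, supposing that some overlap composition $x_j h_{ik} - h_{ji} x_k \in I_3$ fails to be $\cR$-solvable, any terminating reduction sequence applied to it produces a nonzero element of $\textbf{k} N(\cR)_3 \cap I_3$, contradicting (2); this closes the cycle of implications. The final PBW assertion will follow at once because a quadratic Gr\"obner basis provides $A$ with a monomial basis of exactly the form required by Priddy's definition in \cite[Section 5]{priddy}. The main subtlety to navigate is that, before $\cR$ is known to be a Gr\"obner basis, different reduction sequences may yield different representatives in $\textbf{k} N(\cR)_3$; for the contrapositive, however, it suffices to note that if a composition is not $\cR$-solvable then \emph{every} terminating reduction sequence applied to it must end at a nonzero element of $\textbf{k} N(\cR)_3$, since reaching $0$ would witness solvability.
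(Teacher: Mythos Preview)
Your proof is correct and follows essentially the same route as the paper's: both note there are no inclusion ambiguities, derive $(1)\Leftrightarrow(3)$ and $(1)\Rightarrow(2)$ from the Diamond Lemma, and for $(2)\Rightarrow(3)$ argue by contrapositive that an unsolvable overlap reduces to a nonzero element of $\textbf{k}N(\cR)_3\cap I_3$, forcing $\dim A_3<|N(\cR)_3|$. The only cosmetic difference is that the paper phrases the final contradiction via the strict inclusion $N(I)_3\subsetneq N(\cR)_3$ (using that $\LM(F)\in N(\cR)_3\setminus N(I)_3$), whereas you phrase it via the kernel of the surjection $\textbf{k}N(\cR)_3\to A_3$; these are equivalent formulations of the same idea.
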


\begin{proof}First note that there are no compositions of inclusions.
By Corollary \ref{cor:diamonlemma},
\[
N(I)_m \subseteq N(\cR)_m\mbox{ and }\dim A_m = |N(I)_m| \leq |N(\cR)_m |\]
for every $ m \geq 2$.  The implications  $(1)  \Longleftrightarrow  (3)$ and $(1)  \Rightarrow  (2)$
  follow from Lemma \ref{lem:diamonlemma}.

 $(2) \Rightarrow  (3)$  A composition of overlap is either $0$, or  it produces only homogeneous polynomials of degree three.
Suppose $\omega = x_kx_jx_i,$ where $(k, j), (j,i)\in M$, so $f_{kj}= x_kx_j - h_{kj} \in \cR$ and  $f_{ji}= x_jx_i - h_{ji} \in \cR.$
Then the corresponding composition of overlap is
\[
(x_kx_j, x_jx_i)_{\omega}= (f_{kj})x_i - x_k(f_{ji} )= -h_{kj}x_i+ x_k h_{ji} \in I.
\]
By Definition \ref{dfn:compositions_def}, a composition is solvable if and only if it can be reduced to $0$. Assume by contradiction that the composition $(x_kx_j, x_jx_i)_{\omega}$ is not solvable.
Then $(x_kx_j, x_jx_i)_{\omega}\neq 0$ and we can reduce it by means of a finite sequence of reductions
to a (not necessarily unique) normal form
$$F:= \Nor((x_kx_j, x_jx_i)_{\omega}) = cu + \sum_{s=1}^{t} c_su_s \in
\textbf{k}N(\cR),$$
 where $u > u_s$ and  $c \neq 0.$ In particular, $x_kx_jx_i >\LM(F) = u \in N(\cR)$.
  However, the polynomial $F$ is in the ideal $I$, hence $\LM(F) \in \LM(I_3)$ and $\LM (F)$   is not in $N(I)_3$.
  Therefore
   \[N(I)_3 \subsetneqq N(\cR)_3.\] Note that we have an isomorphism of vector spaces \[A_3 \cong\textbf{k} N(I)_3, \]
  hence $\dim A_3 = |N(I)_3 | < |N(\cR)_3|$, a contradiction.
\end{proof}

\begin{rmk}
\label{rmk:important}
 Lemma \ref{lem:diamondquadratic} is very useful for the case when we want to show that an algebra $A$ with explicitly given quadratic defining relations
$\cR \subset \textbf{k}\asX$ is \emph{PBW} (that is $\cR$ is a
Gr\"{o}bner basis of the ideal $I=(\cR)$) and we have precise
information about the dimension $\dim A_3 = d_3$. In this case,
instead of following the standard procedure (algorithm) of
checking whether all compositions are solvable,  we suggest a new
simpler procedure:
\begin{enumerate}
    \item find the set $N(\cR)_3$ and its order $|N(\cR)_3|$, and
    \item compare the order $|N(\cR)_3|$
    with $\dim A_3$.
\end{enumerate}
One has $|N(\cR)_3| \geq \dim A_3$ and an equality holds if and only if  $\cR$ is a Gr\"{o}bner basis of the ideal $I=(\cR)$.
 This method is particularly useful when we work in general settings--general $n$ and general quadratic relations
$\cR$.
It implies a similar procedure for ideals in the quantum space $\cA^N_\g$.

We use this result in Section \ref{sec:Veronese}, see the proof of Theorem \ref{thm:preVeronese_ker}.
In Subsection \ref{subsec:grobner_Quantspace} we give some basics on Gr\"{o}bner bases for ideals in a quantum space $\cA^N_\g$.
Lemma \ref{lem:quant_diamondquadratic}
is an important analogue of Lemma \ref{lem:diamondquadratic}  designed for quadratic Gr\"{o}bner bases of ideals in a quantum space.
\end{rmk}

\section{Quantum spaces}
\label{sec:nc_projective_spaces}

In this section we introduce a class of quadratic algebras which
are central for the paper. We shall refer to them as \emph{quantum
spaces}. They form a special case of the non-commutative
deformation of projective spaces defined by Auroux, Katzarkov, and
Orlov in the context of mirror symmetry \cite{AuKaOr08}. These
algebras are a particular case of the skew-polynomial rings with
binomial relations studied in \cite{Ga94,Ga96}.  We
point out that these objects appear with different
names in the literature: they are sometimes referred to as
\emph{non-commutative projective spaces} and \emph{quantum affine
spaces}.  We shall now recall their definition and main
properties.
\subsection{Basic definitions and results}
\label{subsec:basics_Quantspace}
\begin{dfn}
 A square matrix $\mathrm{\q}= \|q_{ij}\|$ over the complex numbers is \emph{multiplicatively anti-symmetric} if $q_{ij} \in \C^{\times}$, $q_{ji}=q_{ij}^{-1}$ and
$q_{ii}=1$ for all $i,j$.  We shall sometimes refer
to $\mathrm{\q}$ as \emph{a deformation matrix}.
\end{dfn}

\begin{dfn}
\label{dfn:quantum_space}
Let $\mathrm{\q}$ be an $(n+1)\times (n+1)$
\mas
matrix.
We denote by 
$\cA^n_\q$ the complex quadratic algebra
with $n+1$ generators $x_0, \dots, x_n$ subject to the  $\binom{n+1}{2}$
quadratic binomial relations
\begin{equation}
\label{eq:GB_ProjSp}
 \cR=\cR_\q := \{x_jx_i - q_{ji} x_ix_j\mid 0\le i<j\le n\}.
 \end{equation}
In other words
$\cA^n_\q = \C\asX / (\cR)$.
We refer to $\cA^n_\q$ as the \emph{quantum space defined
by the \mas matrix $\q$}.
\end{dfn}

Clearly, the algebra $\cA^n_\q$ is commutative if and only if all entries of $\q$
are 1. In this case $\cA^n_\q$ is isomorphic the algebra of commutative
polynomials $\C[x_0, \dotsc, x_n]$.
Although $\cA^n_{\q}$ is non-commutative whenever $\q$ has at least one entry different from 1, it preserves
 all `good properties' of the commutative polynomial ring $\C[x_0, \dotsc, x_n]$, see Facts \ref{cor:skewpol}.

\begin{ex}
For $n=2$ and
\[ \q= \begin{pmatrix}
1 & q^{-2} & 1\\
q^2 & 1 & 1\\
1& 1& 1
\end{pmatrix}\]
one obtains the non-commutative variety $\mathbb{P}^2_{q, \hslash=0}$ defined in \cite[Section 3.7]{KaKuOr01}. The quantum space $\cA^2_{\q}$ is an Artin--Schelter regular algebra of global
dimension
three, see \cite{AS}.
\end{ex}

\begin{rmk}
\label{rmk:Grbasis} It is easy to prove that the set $\cR$ defined in \eqref{eq:GB_ProjSp} is a reduced
Gr\"obner basis for the ideal $I = (\cR)$ and this fact is well known, see for example \cite[Proposition 5.5]{KRW90}.
 Therefore
 \[N(I) = N(\cR)= \cT(X_n). \]
 In other words the set $\cT(X_n)$ of ordered monomials is the normal basis of the $\C$-vector space $\cA^n_{\q}$.
The free monoid $\asX$ splits as a disjoint union
\begin{equation}
\label{eq:X1eq2}
\asX=  \cT(X_n) \sqcup \LM(I),
\end{equation}
and $\C \asX \simeq  \Span_{\C} \cT(X_n)\oplus I$.
\end{rmk}

\begin{rmk}
\label{rmk:Normalform}
\begin{enumerate}
\item
Every element $f\in \C\asX\setminus I$ has unique normal form $\Nor(f)=\Nor_{\cR}(f)= \Nor_{I} (f)$,
which
satisfies
\[
\Nor(f) = \sum_{i=1}^s c_i T_i \in \C \cT(X_n), \] where
$c_i \in  \C^{\times}$,  $T_1 <T_2 <\ldots < T_s \leq \LM(f)$, and
the equality $f = \Nor (f)$ holds in the algebra $\cA^n_{\q}$.
Moreover, $\Nor (f)= 0$ if and only if $f\in I$.
\item The normal form $\Nor(f)$ can be found effectively using a finite sequence of reductions defined via $\cR$.
   \item There is an equality $\Nor_{\cR} (x_jx_i)= q_{ji}x_ix_j$, for every $0\le i<j\le n$.
\end{enumerate}
 When the ideal $I$, or its generating set $\cR$ is understood from the context, we shall
denote the normal form of $f$ by  $\Nor (f)$.
    \end{rmk}

 More generally, recall that a quadratic  algebra is an associative graded algebra
 $A=\bigoplus_{i\ge 0}A_i$ over a ground field
 $\textbf{k}$  determined by a vector space of generators $V = A_1$ and a
subspace of homogeneous quadratic relations $R= R(A) \subset V
\otimes V.$ We assume that $A$ is finitely generated, so $\dim A_1 <
\infty$. Thus $ A=T(V)/( R)$ inherits its grading from the tensor
algebra $T(V)$. The Koszul dual algebra of $A$, denoted by $A^{!}$
is the quadratic algebra $T(V^{*})/( R^{\bot})$, see
\cite{Ma87, Ma88}. The algebra  $A^{!}$ is also referred to as \emph{the quadratic dual algebra to a quadratic algebra} $A$, see \cite{PoPo}, p.6.

Note that every quantum space $\cA= \cA^n_\q$ is \emph{a skew-polynomial ring with
binomial relations} in the sense of \cite{Ga94,Ga96}, and \emph{a quantum binomial algebra} in the sense of \cite{Ga12}.
Thus the next corollary follows straightforwardly from \cite[Theorem A]{GI04}, see also \cite{Ga12}, Lemma 5.3, and Theorem 1.1.
\begin{cor}
    \label{cor:koszuldual}
    Let $\cA= \cA^n_\q$ be a \emph{quantum space defined
by the \mas matrix $\q$}.
Then
\begin{enumerate}
\item
The Koszul dual $\cA^{!}$ has a presentation
$\cA^{!}= \C\langle \xi_0, \xi_1, \cdots, \xi_n \rangle/ (\cR^{\bot})$, where $\cR^{\bot}$ consists of
$\binom{n+1}{2}$
quadratic binomial relations and $n+1$ monomials
\begin{equation}
\label{eq:GB_ProjSp}
 \cR^{\bot}= \{\xi_j\xi_i - q_{ji}^{-1} \xi_i\xi_j\mid 0\le i<j\le n\} \cup  \{\xi_j^2\mid 0\le j\le n\}.
 \end{equation}
\item
The set $\cR^{\bot}$ is a Gr\"{o}bner basis of the ideal $(\cR^{\bot})$ in $\C\langle \xi_0, \xi_1, \cdots, \xi_n \rangle$,
so $\cA^{!}$ is a \emph{PBW} algebra with \emph{PBW}
generators $\xi_0, \xi_1, \cdots, \xi_n $.
\item
$\cA^{!}$ is a quantum Grassmann algebra of dimension $n+1$.
\end{enumerate}
\end{cor}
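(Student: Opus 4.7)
The plan is to prove the three parts in sequence, working directly from the definition of the Koszul dual and applying the PBW criterion of Lemma \ref{lem:diamondquadratic}. The main computational content sits in the identification of $R^\bot$; once this is in hand, claims (2) and (3) follow by a short calculation.

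For (1), I would compute $R^\bot \subset V^*\otimes V^*$ directly, using the natural pairing $\langle \xi_k\otimes \xi_l, x_p\otimes x_q\rangle = \delta_{kp}\delta_{lq}$. A dimension count already suggests what to expect: $\dim V\otimes V = (n+1)^2$ and $\dim R = \binom{n+1}{2}$, so $\dim R^\bot = \binom{n+1}{2} + (n+1)$. Concretely, for each $i < j$ the generator $x_j x_i - q_{ji} x_i x_j$ of $R$ imposes on the coefficients $a_{pq}$ of $\sum a_{pq}\xi_p\otimes \xi_q$ the single linear condition $a_{ji} = q_{ji}a_{ij}$, while the $n+1$ "diagonal" coefficients $a_{ii}$ remain free. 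This produces exactly $\binom{n+1}{2}$ two-term relations $\xi_j\xi_i - q_{ji}^{-1}\xi_i\xi_j$ (up to the Manin sign convention) together with $n+1$ monomial relations $\xi_i^2$, matching the stated presentation.

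For (2), I would apply Lemma \ref{lem:diamondquadratic} with $\cR^\bot$ in place of $\cR$. With respect to the degree-lexicographic order induced by $\xi_0 < \xi_1 < \dots < \xi_n$, the leading monomials are $\LM(\cR^\bot) = \{\xi_j\xi_i : j\geq i\}$, so $N(\cR^\bot)_m$ consists precisely of the strictly increasing products $\xi_{i_1}\xi_{i_2}\cdots \xi_{i_m}$ with $i_1 < i_2 < \cdots < i_m$, giving $|N(\cR^\bot)_m| = \binom{n+1}{m}$. I would then verify that every overlap composition is $\cR^\bot$-solvable: for an overlap $\xi_k\xi_j\xi_i$ with $k > j > i$, both reduction paths yield $q_{kj}^{-1}q_{ki}^{-1}q_{ji}^{-1}\xi_i\xi_j\xi_k$ (the product of the three scalars being independent of order); for overlaps of the form $\xi_k\xi_j\xi_j$, $\xi_k\xi_k\xi_i$, or $\xi_k^3$, both reduction paths yield $0$ thanks to the nilpotence $\xi_i^2 = 0$. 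Hence $\cR^\bot$ is a Gr\"obner basis by Lemma \ref{lem:diamondquadratic}, and $\cA^!$ is a PBW algebra with PBW generators $\xi_0,\dots,\xi_n$.

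For (3), the PBW property identifies a $\C$-basis of $\cA^!$ with $N(\cR^\bot)$, the set of strictly increasing monomials in the $\xi_i$'s, giving Hilbert series $H_{\cA^!}(t) = (1+t)^{n+1}$ and total dimension $2^{n+1}$; this is precisely the combinatorial structure of a quantum exterior (Grassmann) algebra on $n+1$ generators, with multiplication governed by the $q$-skew-commutation and the nilpotence relations. The only real obstacle I anticipate is fixing the sign convention in the Manin pairing so that the orthogonal computation in (1) reproduces exactly the relations as stated (the polynomial-ring case $\q\equiv 1$ must land on the classical exterior algebra); once this is pinned down, everything reduces to exploiting the multiplicative antisymmetry $q_{ji} = q_{ij}^{-1}$ and the commutativity of multiplication of scalars.
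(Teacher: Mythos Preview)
Your proposal is correct and considerably more explicit than what the paper actually does. The paper offers no self-contained argument for this corollary: it simply observes that every quantum space $\cA^n_\q$ is a skew-polynomial ring with binomial relations and a quantum binomial algebra, and then cites \cite[Theorem~A]{GI04} and \cite[Lemma~5.3, Theorem~1.1]{Ga12} for the conclusion. By contrast, you compute $R^{\bot}$ directly from the pairing, then verify the Gr\"obner basis property by resolving all overlap ambiguities (equivalently, via condition~(3) of Lemma~\ref{lem:diamondquadratic}), and read off the Hilbert series $(1+t)^{n+1}$ from the strictly-increasing normal words. This has the merit of being entirely self-contained and of showcasing exactly the PBW criterion the paper develops; the paper's approach is of course shorter but pushes the content into external references. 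Your caveat about the sign convention is well placed: with the naive pairing $\langle \xi_k\otimes\xi_l,\,x_p\otimes x_q\rangle=\delta_{kp}\delta_{lq}$ one obtains $\xi_j\xi_i + q_{ji}^{-1}\xi_i\xi_j$ rather than $\xi_j\xi_i - q_{ji}^{-1}\xi_i\xi_j$, and indeed at $\q\equiv 1$ the stated relations yield the commutative algebra $\C[\xi_0,\dots,\xi_n]/(\xi_0^2,\dots,\xi_n^2)$ rather than the classical exterior algebra, so the precise form depends on the convention for ``quantum Grassmann algebra'' adopted in the cited references. None of this affects parts~(2) and~(3): the Gr\"obner and PBW arguments go through verbatim for either sign, since the leading monomials and the normal-word combinatorics are identical.
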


The following result can be extracted from \cite{Ga96,GIVB98},  and \cite[Theorem 1.1]{Ga12}.
We use the well-known equality $\binom{n+d}{n} = \binom{n+d}{d}$.

\begin{facts}
\label{cor:skewpol}
Let  $\cA =\cA^n_\q$ be a quantum space.
\begin{enumerate}
\item $\cA$ is canonically graded by length, it is generated in degree one, and $\cA_0=\C$.
\item $\cA$ is a \emph{PBW}-algebra in the sense of Priddy \cite[Section 5]{priddy}, with a \emph{PBW} basis $\cT(X_n)$.
For every $d\in\N$ there is an isomorphism of vector spaces $\cA_d \simeq \Span_{\C} \cT(X_n)_d$, so
\begin{equation}
\label{A-graded}
\dim \cA_d =\vert \cT(X_n)_d \vert = \binom{n+d}{d}.
\end{equation}
\item $\cA$ is Koszul.
\item $\cA$ is a left and a right Noetherian domain.
\item $\cA$ is an Artin--Schelter regular algebra, that is
\begin{enumerate}
\item $\cA$ has polynomial growth of degree $n+1$ (equivalently,  $\gkdim \cA=n+1$);
\item $\cA$ has finite global dimension $\gldim \cA=n+1$;
\item $\cA$ is Gorenstein.
\end{enumerate}
\item The Hilbert series of $\cA$ is
$H_{\cA}(t)= 1/(1-t)^{n+1}$.
\end{enumerate}
\end{facts}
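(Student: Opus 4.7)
My plan is to treat the six items as essentially corollaries of Remark \ref{rmk:Grbasis} together with three standard principles: PBW-implies-Koszul, the iterated-Ore-extension description of skew-polynomial rings, and the fact that the Koszul dual of an Artin--Schelter regular Koszul algebra is a finite-dimensional Frobenius algebra. Each item then becomes a short verification, with most of the content concentrated in items (4) and (5).

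First I would dispatch items (1), (2) and (6) in one stroke. The defining relations in $\cR$ are homogeneous of degree $2$, so $I=(\cR)$ is a homogeneous ideal and $\cA$ inherits from $\C\asX$ the canonical grading by length, with $\cA_0=\C$ and $\cA_1=\C X_n$; in particular $\cA$ is generated in degree one. Remark \ref{rmk:Grbasis} records that $\cR$ is a reduced Gröbner basis of $I$, so by Lemma \ref{lem:diamonlemma} the ordered monomials $\cT(X_n)$ form a $\C$-basis for $\cA$ and constitute a \emph{PBW}-basis in the sense of Priddy. Restricting to degree $d$ and using \eqref{eq:order-deg-d-monomials} gives $\dim\cA_d=\binom{n+d}{d}$, which immediately yields the Hilbert series $H_{\cA}(t)=\sum_{d\ge 0}\binom{n+d}{d}t^d=(1-t)^{-(n+1)}$ by the standard negative binomial identity. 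This handles (1), (2) and (6) simultaneously; item (3) is then automatic, since by Priddy's theorem every PBW algebra is Koszul.

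For (4) I would use the fact that $\cA^n_\q$ admits a presentation as an iterated Ore extension of $\C$. Setting $\cA^{(0)}=\C$ and $\cA^{(k+1)}=\cA^{(k)}[x_k;\sigma_k]$, where $\sigma_k$ is the $\C$-algebra automorphism of $\cA^{(k)}$ determined by $\sigma_k(x_i)=q_{ki}x_i$ for $i<k$, one verifies inductively that $\cA^{(n+1)}\cong\cA^n_\q$ as graded $\C$-algebras; the defining relations $x_kx_i=q_{ki}x_ix_k$ are precisely the Ore commutation relations, and the PBW basis $\cT(X_n)$ from item (2) guarantees that no extra relations appear. Since $\C$ is a Noetherian domain and each step is an Ore extension by an automorphism, both the Noetherian property and the domain property propagate to $\cA^{(n+1)}$, giving (4).

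Finally, for item (5) I would work through the Koszul dual. Corollary \ref{cor:koszuldual} identifies $\cA^{!}$ as a quantum Grassmann algebra in $n+1$ variables; in particular $\cA^{!}$ is finite-dimensional, with a one-dimensional top graded component concentrated in degree $n+1$, and is Frobenius. Because $\cA$ is Koszul, the Koszul resolution of the trivial module $\C$ is minimal and its $k$-th term is determined by $(\cA^{!}_k)^{*}\otimes\cA$; this forces $\gldim\cA=n+1$ and, together with the one-dimensionality of the top degree of $\cA^{!}$, the Gorenstein condition. Polynomial growth of degree $n+1$ (and hence $\gkdim\cA=n+1$) is read off directly from the Hilbert series in (6). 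The subtle point here—and the step I expect to be the main obstacle if one wanted a fully self-contained proof—is the careful translation from "$\cA^{!}$ Frobenius" to "$\cA$ Gorenstein" via Koszul duality; since the result is already well-established for skew-polynomial rings with binomial relations in the cited works \cite{Ga94,Ga96,GIVB98} and \cite[Theorem 1.1]{Ga12}, I would simply invoke them here rather than reprove the translation.
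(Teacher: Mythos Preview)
Your proposal is correct. The paper itself does not give a proof of this statement: it is labelled as \emph{Facts} and is simply prefaced by ``The following result can be extracted from \cite{Ga96,GIVB98}, and \cite[Theorem 1.1]{Ga12}.'' So there is no proof in the paper to compare against; your sketch is a correct and reasonably standard reconstruction of the arguments behind those citations.

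A couple of minor comments on presentation. Your derivation of (1), (2), (6) from Remark~\ref{rmk:Grbasis} and Lemma~\ref{lem:diamonlemma} matches exactly what the paper has already set up, so this part is entirely in line with the paper's framework. Your iterated Ore extension argument for (4) is the standard one and is fine; the paper's cited sources \cite{Ga94,Ga96} treat a more general class of binomial skew-polynomial rings, but for the specific $\cA^n_\q$ your argument is the cleanest route. For (5), you are right to flag the Frobenius-to-Gorenstein translation as the one nontrivial step and to defer to the literature; this is consistent with the paper's own attitude, which treats all of (5) as a known fact rather than something to be reproved.
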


\begin{rmk}
    \label{rmk:strictly_ordered}
The algebra  $\cA= \cA^n_\q$ is a \emph{quantum projective space} in the sense of \cite[Definition 2.1]{ShTi01}
and it is \emph{solvable} in the sense of Kandri-Rodi and Weispfenning
 \cite[Section 1]{KRW90}.
\end{rmk}

Suppose a monomial $u \in \langle X_n\rangle$ has multi-degree $\deg (u) = \alpha =(\alpha_0, \alpha_1,\dotsc, \alpha_n)$ and let  $T_\alpha= x_0^{\alpha
_0}x_1^{\alpha _1}\dotsm x_n^{\alpha _n}$, as in Definition \ref{def:multidegreedef}.
Since all relations in $\cR$ are binomials which preserve the multigrading, there exists a unique $\zeta_u \in \C^{\times}$ such that
\begin{enumerate}
    \item $\zeta_u$ is a monomial in the entries of $\q$,
    \item $\Nor_\cR (u) = \zeta_u T_{\alpha}$,
    \item $u \equiv \zeta_u T_{\alpha}  \mbox{ modulo } I$, i.e., the equality $ u = \zeta_u T_{\alpha}$ holds in $\cA^n_{\q}$.
\end{enumerate}

\begin{convention}
\label{rmk:conventionpreliminary}
    Following \cite{Bergman} (see also our Lemma   \ref{lem:diamonlemma}),
 we consider the space $\C \cT^n$
endowed with multiplication defined by
 \[f \bullet g := \Nor_\cR(fg),\]
 for every $f,g \in \C \cT^n$.
Then $(\C \cT^n, \bullet )$ has a well-defined structure of a graded algebra, and there is an isomorphism of graded algebras
\begin{equation}
\label{eq:Aas_vectspace}
\cA^n_{\q}\cong (\C \cT^n, \bullet ).
\end{equation}
By convention we shall identify the algebra $\cA^n_{\q}$ with $(\C \cT^n,\bullet )$.
\end{convention}

\subsection{Some basics of Gr\"obner bases theory for ideals in quantum spaces}
\label{subsec:grobner_Quantspace}
In Sections \ref{sec:Veronese} and \ref{Sec:Segremap} we shall introduce analogues of the Veronese map $v_{n,d}$
and of the Segre map $s_{n,m}$ for quantum spaces. A
natural problem in this context is to describe the reduced
Gr\"{o}bner bases of  $\ker (v_{n,d})$ and $\ker
(s_{n,m})$. Each of the kernels is an ideal of an
appropriate quantum space $\cA^N_\g$,
so we need a Gr\"{o}bner bases theory which is admissible for quantum spaces.
Proposition \ref{pro:strictlyordered} shows that each
quantum space $\cA^N_\g$ is a \emph{strictly ordered} algebra in the sense of
\cite[Definition 1.9]{GI91},
and the Gr\"{o}bner bases theory for
ideals in strictly ordered algebras presented by the third author in \cite{GI91} and
\cite{GI91A} seems natural and convenient for our quantum spaces.
Here we follow the approach of these works. Note that the
results of \cite{GI91} and \cite{GI91A} are independent from and
agree with \cite{KRW90} and \cite{Mo94}.

In the sequel we often work simultaneously with
two distinct quantum spaces whose sets of generators $X_n = \{x_0,
\dots, x_n\}$ and $Y_N = \{y_0, \dots, y_N\}$ are disjoint and
have different cardinalities,  $N> n$. To avoid ambiguity we
denote by $\prec$ the degree-lexicographic ordering on $\asY$ and
by $\prec_0$  the restriction $\prec_{|\cT(Y_N)}$ of $\prec$ on
the set of ordered monomials $\cT(Y_N) \subset \asY$.

Given an
arbitrary   \mas $(N+1)\times (N+1)$ matrix $\g =\|g_{ij}\|$, let $\cA^N_\g= \C\asY / (\cR_{\g})$ be the associated quantum space, where
\[\cR _{\g} := \{y_j y_i - g_{ji} y_i y_j     \mid 0\le i<j\le N \}.\]
Following Convention \ref{rmk:conventionpreliminary}, we identify the two algebras
\[\cA^N_\g \cong (\C \cT(Y_N), \bullet).\]



 Let $\mathfrak{J}_\g = (\cR_\g)$.
 We shall write $\Nor (f)$ for the normal form 
 of $f\in \C \asY$,
 keeping the ideal $\mathfrak{J}_\g $
fixed.
The operation $\bullet$ on $\C \cT(Y_N)$ induces
also an operation $\star$ on the set $\cT(Y_N)$  defined by
\[u\star v := \LM(\Nor (uv)) = \LM(u\bullet v),\]
for every $u, v \in \cT(Y_N)$.
It is not difficult to see that $(\cT(Y_N),\star)$ is a monoid.

Let $u, v \in \cT(Y_N)$, and let $\alpha=\deg u+\deg v$. We know that
$u\bullet v = \zeta(u,v) T(u,v)$, where $\zeta = \zeta(u,v) \in \C^{\times}$ and $T(u,v)\in \cT(Y_n)$, with $\deg T(u,v) = \alpha$.
Similarly, $v\bullet u = \eta(v,u) T(v,u),$ where $\eta(v,u) \in \C^{\times}$ and $\deg T(v,u) = \alpha = \deg T(u,v)$.
The unique ordered monomial in $\asY$ with multi-degree $\alpha$ is $T_{\alpha}$, therefore
 \[u\star v = v\star u = T_{\alpha}. \]
 It follows that there is an isomorphism of monoids $(\cT(Y_n), \star) \cong  [y_0, \dots, y_N]$, the free
 abelian
   monoid generated by $Y_N$.
 This agrees with  \cite[Theorem I and Theorem II]{GI91}.

Note that identifying $\cA^N_\g$ with  $\C \cT(Y_N)$ we also have
the degree-lexicographic well-ordering $\prec_0$ on
 the free abelian monoid $(\cT(Y_N), \star)$. For every $f \in \C \cT(Y_N)$, its leading monomial with respect to $\prec_0$ is denoted by $\LM (f)_{\prec_0}$.
  In fact $\LM_{\prec_0}(f) =\LM_{\prec}(f)$ and we shall simply write  $\LM (f)$.

The proposition below follows straightforwardly from \cite{GI91}.

 \begin{pro}
 \label{pro:strictlyordered}
\begin{enumerate}
\item
  The quantum space $\cA^N_\g = (\C \cT(Y_N), \bullet)$  is a  \emph{strictly ordered algebra} in the sense of \cite[Definition 1.9]{GI91},
 that is,  each of the following two equivalent conditions is
 satisfied:
\begin{description}
\item[SO1] Let $a,b,c \in \cT (Y_N)$. If $a \prec_0 b$, then $a\star c \prec_0  b\star c$ and $ c\star a \prec_0  c\star b$;
\item[SO2] $\LM (f\bullet h) = \LM (\LM(f)\bullet \LM(h))$, for all $ f,h \in
\cA^N_\g$.
\end{description}
\item
Every two-sided (respectively,  one-sided)  ideal $\mathfrak{K}$ of $\cA^N_\g$ has a finite reduced Gr\"{o}bner basis with respect to the ordering
$\prec_0$  on $(\cT(Y_N), \star)$, see Definition \ref{dfn:Grbasis_quantum_space}.
\end{enumerate}
 \end{pro}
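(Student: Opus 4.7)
The plan is to verify that $\cA^N_\g$ fits the framework of \cite{GI91,GI91A}; once strict orderedness is established, part (2) will follow from the general Gröbner basis theorem of those papers. The key structural observation, already recorded in the discussion preceding the statement, is that $u \star v = T_{\deg u + \deg v}$ for all $u,v \in \cT(Y_N)$, making $(\cT(Y_N), \star)$ isomorphic to the free abelian monoid $[y_0, \dotsc, y_N]$. In particular, $\prec_0$ corresponds under this identification to the standard degree-lexicographic ordering on commutative monomials.

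For part (1), I would establish SO1 directly at the level of multi-degrees. Given $a,b,c \in \cT(Y_N)$ with multi-degrees $\alpha,\beta,\gamma$, and assuming $T_\alpha \prec_0 T_\beta$, the required inequality $T_{\alpha+\gamma} \prec_0 T_{\beta+\gamma}$ is immediate when $|\alpha|\neq|\beta|$; when $|\alpha|=|\beta|$, a short inspection of the word-level ordering shows that $T_\alpha \prec_0 T_\beta$ holds exactly when at the first index $k$ with $\alpha_k \neq \beta_k$ one has $\alpha_k > \beta_k$, a condition manifestly preserved by componentwise addition of $\gamma$. Symmetry gives the left-multiplication inequality in SO1. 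The equivalent condition SO2 then either follows from the general equivalence stated in \cite[Definition 1.9]{GI91}, or can be obtained directly using the $\N_0^{N+1}$-multigrading of $\cA^N_\g$: writing $f = \sum c_\alpha T_\alpha$ and $h = \sum d_\beta T_\beta$ in multi-graded form, one has $f \bullet h = \sum c_\alpha d_\beta \zeta(\alpha,\beta) T_{\alpha+\beta}$ with every $\zeta(\alpha,\beta) \in \C^\times$; SO1 forces the unique largest term on the right to come from the single pair $(\deg \LM(f), \deg \LM(h))$, so no cancellation can occur in the leading coefficient and $\LM(f\bullet h) = \LM(\LM(f) \bullet \LM(h))$.

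For part (2), with strict orderedness in hand I will directly invoke the main theorems of \cite{GI91,GI91A}, which guarantee that every one- or two-sided ideal in a strictly ordered algebra over a free abelian monoid admits a finite reduced Gröbner basis. The principal obstacle in any fully self-contained proof lies here: one must simultaneously show that the Gröbner reduction algorithm terminates and that the associated ideal of leading monomials is finitely generated. The first is handled by the well-ordering property of $\prec_0$ combined with SO1, while the second reduces to the Noetherianity of $\C[y_0,\dotsc,y_N]$ via the monoid isomorphism $(\cT(Y_N),\star) \cong [y_0,\dotsc,y_N]$ above. Both ingredients are packaged cleanly in \cite{GI91}, which is why the proposition only requires verifying the hypotheses rather than re-deriving the whole machinery from scratch.
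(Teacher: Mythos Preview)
Your proposal is correct and matches the paper's approach: the paper simply states that the proposition ``follows straightforwardly from \cite{GI91}'' without giving any further argument, so your explicit verification of SO1 via multi-degrees and your appeal to \cite{GI91,GI91A} for part (2) actually supply more detail than the paper itself. The only thing to note is that the paper treats both parts as immediate consequences of the cited reference rather than re-deriving SO1 by hand, so your direct check is a (correct) elaboration rather than a departure.
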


The properties \textbf{SO1} and \textbf{SO2} allow to define  Gr\"{o}bner bases for ideals of a quantum space
$\cA^N_\g$ in a
natural way, and to use a standard Gr\"{o}bner bases
theory, analogous to the theory of non-commutative Gr\"{o}bner
bases for ideals of the free associative algebra (Diamond Lemma)
proposed by Bergman.

\begin{dfn}
\label{dfn:normal_terms}
 Let $P \subset \cA^N_\g$ be an arbitrary subset, and let $\LM(P)= \{\LM(f) \mid f \in P\}$.
 A monomial $T\in \cT (Y_N)$ \emph{is normal modulo} $P$  if it does not contain as a subword  any $u \in \LM(P)$.
 We denote
 \begin{equation}
\label{eq:N(P)}
N_{\prec_0}(P)= \{T \in \cT(Y_N) \mid T\text{ is normal mod P} \}.
\end{equation}
    \end{dfn}

\begin{dfn}
\label{dfn:Grbasis_quantum_space} Suppose $\mathfrak{K}$ is an
ideal of $\cA^N_\g = \C\cT(Y_N)$.  A set $F \subset \mathfrak{K}$
is \emph{a Gr\"{o}bner basis} of $\mathfrak{K}$ if for
any $h \in \mathfrak{K}$ there exists an $f \in F$, and monomials
$a,b \in \cT(Y_N)$ such that
   $\LM(h) = a\star\LM(f)\star b$.
   Due to the commutativity of the operation $\star$ this is equivalent to $\LM(h) = u\star\LM(f)$, for some $u \in \cT$.
    \end{dfn}
  An interested reader can find various equivalent definitions of a Gr\"{o}bner basis in \cite{Mo94,KRW90}, and numerous papers which appeared later.
  Given
  an ideal $\mathfrak{K}$
  generated by a finite set $F$ one can verify algorithmically whether $F$ is a Gr\"obner basis for the ideal
  $\mathfrak{K}$,
    see for example \cite{Mo94}.

   \begin{lem}
    \label{lem:quadr_Grbasis}
Let $\mathfrak{K} = (F)$ be an ideal of $\cA^N_\g$ generated by
the set $F \subset \C \cT(Y_N)$. Then
$F$ is a Gr\"{o}bner basis of $\mathfrak{K}$ if and only if $N(F) = N_{\prec_0}(F) = N_{\prec_0}(\mathfrak{K})$.
In this case the vector space $\cA^N_\g$ splits as a direct sum
\[\cA^N_\g = \C\cT(Y_N) = \mathfrak{K} \oplus \C N_{\prec_0}(F)\]
and the set $N_{\prec_0}(F) \subset \cT(Y_N)$ 
 projects to a $\C$-basis of the quotient algebra
$\cA^N_\g / \mathfrak{K}.$
Moreover, if $F$ consists of
homogeneous polynomials, then
\begin{equation}
\label{eq:direct_sums}
(\cA^N_\g)_j = (\C\cT(Y_N))_j = (\mathfrak{K})_j \oplus (\C N_{\prec_0}(F))_j,
\end{equation}
for every $j \geq 2$.
\end{lem}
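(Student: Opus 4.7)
My plan is to model the proof on the classical Bergman diamond lemma argument, transposed from the free algebra to the strictly ordered quantum space $\cA^N_\g$. The point is that by Proposition \ref{pro:strictlyordered}, the operation $\star$ on $\cT(Y_N)$ makes $\prec_0$ behave with respect to multiplication exactly as the degree-lexicographic ordering behaves on $\asY$, so the standard formalism of leading monomials and reductions applies verbatim.

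First, I would establish the easy inclusion $N_{\prec_0}(\mathfrak{K}) \subseteq N_{\prec_0}(F)$: since $F \subseteq \mathfrak{K}$, we have $\LM(F) \subseteq \LM(\mathfrak{K})$, so a monomial in $\cT(Y_N)$ avoiding all subwords in $\LM(\mathfrak{K})$ \emph{a fortiori} avoids those in $\LM(F)$. Then I would translate the Gröbner basis condition of Definition \ref{dfn:Grbasis_quantum_space} into the equivalent statement that every $T \in \LM(\mathfrak{K})$ can be written as $T = u \star \LM(f)$ for some $f \in F$, $u \in \cT(Y_N)$. The latter is precisely the complementary statement $N_{\prec_0}(F) \subseteq N_{\prec_0}(\mathfrak{K})$. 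Combining the two inclusions gives the equivalence of $(F \text{ is a Gr\"obner basis})$ with $N_{\prec_0}(F) = N_{\prec_0}(\mathfrak{K})$.

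Next, for the direct sum decomposition, I would first show that $\cA^N_\g = \mathfrak{K} + \C N_{\prec_0}(F)$ holds for any generating set $F$, by iterated reduction: given $h \in \cA^N_\g = \C\cT(Y_N)$, if $\LM(h) = u \star \LM(f)$ for some $f \in F$ and $u \in \cT(Y_N)$, then by property \textbf{SO2} the element $h - c\, u \bullet f$ (for a suitable scalar $c$) has a strictly smaller leading monomial, and $h - (h - c\, u\bullet f) \in \mathfrak{K}$. Since $\prec_0$ is a well-ordering on $\cT(Y_N)$, this reduction terminates with a representative in $\C N_{\prec_0}(F)$. For the intersection $\mathfrak{K} \cap \C N_{\prec_0}(F) = 0$, assuming $F$ is a Gröbner basis, any nonzero $g \in \mathfrak{K} \cap \C N_{\prec_0}(F)$ would satisfy $\LM(g) \in N_{\prec_0}(F) = N_{\prec_0}(\mathfrak{K})$, but also $\LM(g) \in \LM(\mathfrak{K})$, a contradiction. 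The claim that $N_{\prec_0}(F)$ projects to a $\C$-basis of $\cA^N_\g/\mathfrak{K}$ is then immediate.

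Finally, for the graded statement, homogeneity of $F$ makes $\mathfrak{K}$ a graded ideal, and $N_{\prec_0}(F) = \bigsqcup_{j \geq 0} N_{\prec_0}(F)_j$ splits by length. Since the reduction step $h \mapsto h - c\, u \bullet f$ preserves the length grading when $f$ is homogeneous (and since the decomposition $\cA^N_\g = \mathfrak{K} \oplus \C N_{\prec_0}(F)$ is compatible with the grading), intersecting with each graded piece yields \eqref{eq:direct_sums}. The only potential subtlety is the termination of reductions, which is guaranteed by the well-ordering property of $\prec_0$, so I do not anticipate a serious obstacle; the argument is essentially a copy of Lemma \ref{lem:diamonlemma} with $\asoX$ replaced by $(\cT(Y_N), \star)$.
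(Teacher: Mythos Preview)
Your argument is correct. The paper actually states this lemma without proof, treating it as a standard consequence of the Gr\"obner basis theory for strictly ordered algebras developed in \cite{GI91, GI91A} (and the parallel with Lemma~\ref{lem:diamonlemma} in the free-algebra setting). Your write-up supplies exactly that standard argument: the inclusion $N_{\prec_0}(\mathfrak{K})\subseteq N_{\prec_0}(F)$ is immediate; the reverse inclusion is a reformulation of Definition~\ref{dfn:Grbasis_quantum_space} once one observes (using that $\mathfrak{K}$ is an ideal and property \textbf{SO2}) that the complement of $N_{\prec_0}(\mathfrak{K})$ in $\cT(Y_N)$ coincides with $\LM(\mathfrak{K})$; and the direct-sum and graded statements follow from the well-ordering of $\prec_0$ via reduction. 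One small expository point: in your reduction step you only describe the case $\LM(h)\notin N_{\prec_0}(F)$; to make the induction explicit you should also mention that when $\LM(h)\in N_{\prec_0}(F)$ one peels off the leading term and recurses on $h-c\,\LM(h)$. This is surely what you intend, but it is worth saying.
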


The following is an analogue of Lemma \ref{lem:diamondquadratic} for ideals of $\cA^N_\g$ generated by quadratic polynomials.
\begin{lem}
\label{lem:quant_diamondquadratic} Let $\mathfrak{K} = (F)$ be an
ideal of $\cA^N_\g$ generated by a set of quadratic polynomials $F
\subset (\C \cT(Y_N))_2$ and let $B = \cA^N_\g / \mathfrak{K}$.
We consider the canonical grading of $B$ induced by the grading of
$\cA^N_\g$. Then $F$  is a Gr\"{o}bner basis of
$\mathfrak{K}$ if and only if
\begin{equation}
\label{eq:dim_B_3}
 \dim B_3 = |(N_{\prec_0}(F))_3|.
\end{equation}
\end{lem}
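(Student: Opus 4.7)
The plan is to transport the argument of Lemma \ref{lem:diamondquadratic} from the free associative algebra $\C\asoX$ to the strictly ordered quantum space $\cA^N_\g$, using Proposition \ref{pro:strictlyordered} and Lemma \ref{lem:quadr_Grbasis} to replace the Diamond Lemma and the direct-sum decomposition that were used there.

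The easy direction $(\Rightarrow)$ is a direct appeal to Lemma \ref{lem:quadr_Grbasis}: if $F$ is a Gr\"obner basis of $\mathfrak{K}$, then the homogeneity of $F$ yields the degree-$3$ piece of the decomposition $(\cA^N_\g)_3 = \mathfrak{K}_3 \oplus (\C N_{\prec_0}(F))_3$, from which
\[\dim B_3 = \dim (\cA^N_\g)_3 - \dim \mathfrak{K}_3 = |N_{\prec_0}(F)_3|\]
is immediate.

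For the nontrivial direction $(\Leftarrow)$, I would first note that, independently of whether $F$ is a Gr\"obner basis, the containment $\LM(F)\subseteq \LM(\mathfrak{K})$ gives $N_{\prec_0}(\mathfrak{K})\subseteq N_{\prec_0}(F)$, while the general decomposition $\cA^N_\g=\mathfrak{K}\oplus \C N_{\prec_0}(\mathfrak{K})$ (valid for every ideal in a strictly ordered algebra) yields $\dim B_m = |N_{\prec_0}(\mathfrak{K})_m|\leq |N_{\prec_0}(F)_m|$ for every $m$. Hence the hypothesis $\dim B_3=|N_{\prec_0}(F)_3|$ forces the equality $N_{\prec_0}(\mathfrak{K})_3 = N_{\prec_0}(F)_3$. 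Assume now for contradiction that $F$ is not a Gr\"obner basis. The Diamond Lemma for strictly ordered algebras then produces an unsolvable composition of overlap between two elements $f_1, f_2 \in F$; because every $f_i$ is quadratic with $\LM(f_i)\in \cT(Y_N)_2$, such a composition yields a nonzero element $h\in \mathfrak{K}_3$ whose normal form modulo $F$ is nonzero, so that $\LM(h)\in N_{\prec_0}(F)_3$. But $\LM(h)\in \LM(\mathfrak{K})$ also means $\LM(h)\notin N_{\prec_0}(\mathfrak{K})_3$, contradicting the equality $N_{\prec_0}(\mathfrak{K})_3 = N_{\prec_0}(F)_3$.

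The main point that needs care is the claim that obstructions arising from a set of quadratic relations in the commutative-like monoid $(\cT(Y_N),\star)$ all live in degree $3$; this is the analogue, in the strictly ordered setting, of the familiar observation used in Lemma \ref{lem:diamondquadratic} that two length-two leading words can only produce length-three composition polynomials, and it is exactly what allows degree-$3$ dimension-counting to certify the Gr\"obner basis property in full. Once this is invoked from \cite{GI91,GI91A}, the argument closes as above.
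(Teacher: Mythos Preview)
The paper does not actually prove Lemma~\ref{lem:quant_diamondquadratic}; it merely states the lemma as ``an important analogue of Lemma~\ref{lem:diamondquadratic} designed for quadratic Gr\"obner bases of ideals in a quantum space'' and refers the reader to the Gr\"obner basis theory for strictly ordered algebras in \cite{GI91,GI91A}. Your proposal carries out exactly the transport that the paper leaves implicit: you repeat the contradiction argument of Lemma~\ref{lem:diamondquadratic} with $N_{\prec_0}(\mathfrak{K})$ and $N_{\prec_0}(F)$ in place of $N(I)$ and $N(\cR)$, using Lemma~\ref{lem:quadr_Grbasis} for the direct-sum decomposition. This is the intended line, and it is correct.

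One comment on the point you flag at the end. In the free associative algebra the claim ``all obstructions from quadratic relations live in degree~$3$'' is automatic, because the only ambiguities are overlaps of two length-$2$ words. In $(\cT(Y_N),\star)$, however, two quadratic leading terms can have least common multiple of degree~$4$ (when they are coprime), so the corresponding $S$-polynomial is not of degree~$3$. What saves you is the analogue of Buchberger's second criterion: in a strictly ordered algebra whose monoid of normal monomials is commutative, $S$-polynomials of pairs with coprime leading terms reduce to zero automatically. This is indeed contained in the framework of \cite{GI91,GI91A} (and of \cite{KRW90} for solvable polynomial rings), so your citation is appropriate; but since this is the only genuinely new ingredient relative to Lemma~\ref{lem:diamondquadratic}, it would strengthen the write-up to name it explicitly rather than leave it as ``needs care''.
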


\section{The $d$-Veronese subalgebra of $\cA^n_{\q}$, its generators and relations}
\label{sec:dVeronese}
In this section we study the $d$-Veronese subalgebra $\cA^{(d)}$ of the  quantum space
$\cA=\cA^n_{\q}$. This is an
algebraic construction which mirrors the Veronese embedding.  First we recall some basic definitions and facts about Veronese subalgebras of general graded algebras.
 Our main reference is  \cite[Section 3.2]{PoPo}.
 The main result of the section is Theorem \ref{thm:d-Veronese_relations} which
 presents the $d$-Veronese subalgebra $\cA^{(d)}$ in terms of generators and explicit quadratic relations.

\begin{dfn}
Let $A=\bigoplus_{k\in\N_0}A_{k}$ be a graded algebra. For $d\in\N$, the \emph{$d$-Veronese subalgebra} of $A$ is the graded algebra
    \[A^{(d)}=\bigoplus_{k\in\N_0} A_{kd}.\]
\end{dfn}

\begin{rmk}
    \label{rmk:Ver_properties}
\begin{enumerate}
\item By definition the algebra $A^{(d)}$ is a subalgebra of $A$. However, the embedding is not a graded algebra morphism.
The Hilbert function of $A^{(d)}$ satisfies
    \[h_{A^{(d)}}(t)=\dim(A^{(d)})_t=\dim(A_{td})=h_A(td).\]
\item Let $\cA =\cA^n_\q$ be the quadratic algebra with relations
$\cR$ introduced in Definition \ref{dfn:quantum_space}. It follows
from \cite[Proposition 2.2]{PoPo}, and Facts  \ref{cor:skewpol}
that its $d$-Veronese subalgebra $\cA^{(d)}$ is one-generated, quadratic and
Koszul. Moreover, $\cA^{(d)}$ is
left and right Noetherian.
\end{enumerate}
\end{rmk}

We fix a \mas matrix $\q$ and set $\cA =\cA^n_{\q}$. By Convention
\ref{rmk:conventionpreliminary}, $\cA$ is identified with the
algebra $(\C \cT^n, \bullet)$ and
\[\cA =\bigoplus_{k\in\N_0}  \cA_k \cong \bigoplus_{k\in\N_0}  \C(\cT^n)_k.\]
Hence its d-Veronese subalgebra satisfies
\[\cA^{(d)} = \bigoplus_{k\in\N_0}  \cA_{kd} \cong \bigoplus_{k\in\N_0}  \C(\cT^n)_{kd}.  \]
The ordered monomials $w \in (\cT^n)_d$ of length $d$ are degree one generators of $\cA^{(d)}$, hence
\[
\dim \cA_d = |(\cT^n)_d |=\binom{n+d}{d}.
\]
We set $N=\binom{n+d}{d}-1$ and we order the elements of $(\cT^n)_d$  lexicographically, so
        \begin{equation}
        \label{eq:deg-d-monomials}
(\cT^n)_d = \{ w_0 =x_0^d < w_1= (x_0)^{d-1}x_1 < \dots < w_N= x_n^d \}.
        \end{equation}
The $d$-Veronese $\cA^{(d)}$ is a quadratic algebra
(one)-generated by $w_0, w_1, \dots , w_N.$ We shall find a
minimal set of its quadratic relations, each of which is a linear
combination of products  $w_iw_j$ for some $i,j\in\{0,\dots,N\}$. The
following notation will be used throughout the paper.

\begin{notaz}
\label{notaz: C(n,d)} 
 Let $N=\binom{n+d}{d}-1$.  For every integer $j, \; 1 \leq j \leq N$, we denote by $\alpha^{j}$ the  multi-degree $\deg
(w_j)$, thus
 \[\alpha^j=(\alpha _{j_0},  \dots, \alpha _{j_n}) \mbox{ whenever }  w_j = x_0^{\alpha_{j_0}} \dots x_n^{\alpha_{j_n}}.\]
We define
\begin{equation}
m(j) = \min \{s \in \{0, \dots, n\}\mid \alpha_{j_s} \geq 1
\}\mbox{ and }  M(j) = \max \{s \in \{0, \dots, n\}\mid
\alpha_{j_s} \geq 1 \}.
\end{equation}
In other words, if $ w_j = x_{j_1}^{\alpha_{j_1}}x_{j_2}^{\alpha_{j_2}}\dots x_{j_d}^{\alpha_{j_d}}$ for some $0 \leq j_1 \leq j_2 \leq \dots \leq j_d$ and ${\alpha_{j_1}},
\dots,
{\alpha_{j_d}} \geq 1$, then $m(j)= j_1$ and $M(j) = j_d$. For example, if  $w_j = x_2x_4^3x_7^2$, then $m(j) = 2$ and $M(j) = 7$.
We further define
\begin{align*}
\mathrm{P}(n,d)    &= \{ (i, j)\mid 0 \leq i \leq j \leq N \};  \\
\mathrm{C}(n,2, d) &= \lbrace (i, j)\in\mathrm{P}(n,d)\mid M(i) \leq m(j)\rbrace=\lbrace (i, j)\in \mathrm{P}(n,d)\mid w_iw_j\in(\cT^n)_{2d}\rbrace;\\
\mathrm{C}(n,3,d) & = \lbrace (i, j, k) \, \vert \, 0 \leq i \leq j \leq k \leq N, \, (i,j), (j, k) \in \mathrm{C}(n,2, d)   \rbrace; \\
\Mv(n,d) & = \lbrace (i, j)\in\mathrm{P}(n,d)\mid M(i) > m(j)\rbrace=\lbrace (i, j)\in\mathrm{P}(n,d)\mid w_iw_j\notin(\cT^n)_{2d}\rbrace.
\end{align*}
\end{notaz}

\begin{lem}
    \label{lem:orders}
Let $(\cT^n)_{p} = (\cT(X_n))_{p}$ be the set of all ordered monomials $w \in \asX$ of length  $|w|= p$.
  \begin{enumerate}
  \item
  The maps
  \begin{align*}
  \Phi: \mathrm{C}(n,2, d) &\rightarrow (\cT^n)_{2d}&\mbox{ and }&&\Psi: \mathrm{C}(n,3, d) &\rightarrow (\cT^n)_{3d}\\ (i,j)& \mapsto w_iw_j &&&(i,j,k)& \mapsto w_iw_jw_k
  \end{align*}
  are bijective. Therefore
  \begin{equation}
  \label{eq:card_Cnd}
 |\mathrm{C}(n,2, d) | = |(\cT^n)_{2d}| =\binom{n+2d}{n}\quad \mbox{ and }\quad
 |\mathrm{C}(n,3, d) | = |(\cT^n)_{3d}| =\binom{n+3d}{n}.
\end{equation}
  \item
  The set $\mathrm{P}(n, d)$ is a disjoint union
$\mathrm{P}(n, d) = \mathrm{C}(n,2, d) \sqcup \Mv(n,d).$ Moreover
\begin{equation}
  \label{eq:card_Mv}
|\mathrm{P}(n, d)| = \binom{N+2}{2}\quad \mbox{ and }\quad
 |\Mv(n,d)|= \binom{N+2}{2} - \binom{n+2d}{n}.
\end{equation}
  \end{enumerate}
\end{lem}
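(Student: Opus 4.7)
The plan is to construct explicit two-sided inverses for $\Phi$ and $\Psi$; all the cardinality claims will then follow immediately from \eqref{eq:order-deg-d-monomials} together with a standard count of weakly increasing pairs.

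For well-definedness of $\Phi$, given $(i,j)\in\mathrm{C}(n,2,d)$ I would write $w_i = x_{i_1}\cdots x_{i_d}$ and $w_j = x_{j_1}\cdots x_{j_d}$ with $i_1\le\dots\le i_d = M(i)$ and $m(j) = j_1\le\dots\le j_d$, so that the hypothesis $M(i)\le m(j)$ forces the concatenation $w_iw_j$ to be an ordered monomial, hence a member of $(\cT^n)_{2d}$. This incidentally justifies the second description of $\mathrm{C}(n,2,d)$ recorded in Notation \ref{notaz: C(n,d)}. Injectivity of $\Phi$ is automatic, since equal words of length $2d$ must agree on their length-$d$ prefixes and suffixes. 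For surjectivity, given $u = x_{k_1}\cdots x_{k_{2d}}\in(\cT^n)_{2d}$ with $k_1\le\dots\le k_{2d}$, I would split $u$ into two blocks of length $d$: the first block is some $w_i$, the second some $w_j$, and one has $M(i) = k_d\le k_{d+1} = m(j)$. The inequality $i\le j$ needed for $(i,j)\in\mathrm{P}(n,d)$ follows from $k_s\le k_{s+d}$ for every $s\in\{1,\dots,d\}$, which forces $w_i$ not to exceed $w_j$ in the lexicographic order fixed in \eqref{eq:deg-d-monomials}. The bijectivity of $\Psi$ is handled identically, splitting an ordered monomial of length $3d$ into three consecutive blocks of length $d$ and reading off the triple $(i,j,k)$.

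Once the bijections are in place, the equalities in \eqref{eq:card_Cnd} are simply \eqref{eq:order-deg-d-monomials} applied with $p = 2d$ and $p = 3d$. For part (2), the decomposition $\mathrm{P}(n,d) = \mathrm{C}(n,2,d)\sqcup\Mv(n,d)$ holds by the very definition of $\Mv(n,d)$ as the complement of $\mathrm{C}(n,2,d)$ in $\mathrm{P}(n,d)$, the count $|\mathrm{P}(n,d)| = \binom{N+2}{2}$ is the standard number of weakly increasing pairs in an $(N+1)$-element set, and $|\Mv(n,d)|$ then follows by subtraction. The whole statement is essentially combinatorial bookkeeping, and I do not anticipate any genuine obstacle; the only step that merits a moment of care is checking that a uniform split of an ordered monomial produces pieces whose lexicographic indices are nondecreasing, which is precisely what the verification above accomplishes.
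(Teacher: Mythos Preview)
Your proposal is correct and follows essentially the same approach as the paper: both arguments verify well-definedness of $\Phi$ and $\Psi$ from the condition $M(i)\le m(j)$, construct the inverse by splitting an ordered monomial of length $2d$ (respectively $3d$) into consecutive blocks of length $d$, and obtain the cardinality formulas in part~(2) by subtraction. Your version is slightly more explicit in separating injectivity from surjectivity and in checking the inequality $i\le j$ for the recovered pair, but the underlying argument is the same.
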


\begin{proof} \begin{enumerate}
\item
Given $w_i, w_j \in (\cT^n)_{d}$, their product $w = w_iw_j$ belongs to $(\cT^n)_{2d}$  if and only if
$(i,j)\in  \mathrm{C}(n,2, d)$, hence $\Phi$ is well-defined.
Observe that every $w \in (\cT^n)_{2d}$ can be written uniquely as
\begin{equation}
\label{eq:w_Tn_2d} w = x_{i_1}\dots
x_{i_d}x_{j_{1}}\dots x_{j_{d}}, \mbox{ where }   0 \leq i_1
\leq \dots \leq i_d \leq j_1  \leq \dots \leq
j_d.
\end{equation}
It follows that $w$ has a unique presentation $w=w_iw_j$, 
 where
\begin{align*}
 w_i&= x_{i_1}\dots x_{i_d} \in (\cT^n)_{d}\mbox{, }
 w_j =x_{j_{1}}\dots x_{j_{d}}\in (\cT^n)_{d},\\
  M(i) &=i_d\leq  m(j)= j_1\mbox{ and }
 (i,j)\in  \mathrm{C}(n,2, d).
\end{align*}
This implies that $\Phi$ is a
bijection
.

Consider now the map $\Psi$. Given $w_i,
w_j, w_k \in (\cT^n)_{d}$, their product $\omega = w_iw_jw_k$
(considered as an element in $\asX$) belongs to $(\cT^n)_{3d}$ if and only if $(i,j,k)\in
\mathrm{C}(n,3, d)$, hence  $\Psi$ is well-defined. The proof that $\Psi$ is bijective is similar to the case of $\Phi$.
\item It is clear that
\[
   |\mathrm{P}(n, d)| =\binom{N+1}{2}+ N+1 = 
    \binom{N+2}{2}.
   \]
By definition $\mathrm{P}(n,d)=\mathrm{C}(n,2, d) \sqcup
\Mv(n,d)$  is a disjoint union of sets, hence
\[
|\Mv(n,d)| = |\mathrm{P}(n, d)| -|\mathrm{C}(n,2,d)| = \binom{N+2}{2} -\binom{n+2d}{n}.\qedhere
\]\end{enumerate}
 \end{proof}


The following result describes the $d$-Veronese subalgebra $(\cA^n_\q)^{(d)}$ of the quantum space $\cA^n_\q$ in terms of generators and quadratic relations.

\begin{thm}
    \label{thm:d-Veronese_relations} Let $\q$ be an $(n+1)\times(n+1)$ \mas matrix and let $\cA =\cA^n_\q$.
The d-Veronese subalgebra $\cA^{(d)} \subseteq \cA$
is a quadratic algebra with $\binom{n+d}{d}$ generators, namely the elements of $(\cT^n)_{d}$,
subject to $(N+1)^2 -\binom{n+2d}{n}$ independent quadratic  relations
 which split into two disjoint sets $\cR_1$ and $\cR_2$ given below.
\begin{enumerate}
 \item
 \label{thm:d-Veronese_relations1}
 The set $\cR_1$ contains exactly $\binom{N+1}{2}$  relations
 \begin{equation}
  \label{eq:fji}
\cR_1 = \{ f_{ji} = w_jw_i -\varphi_{ji}
w_{i^{\prime}}w_{j^{\prime}}\mid 0 \leq i < j \leq N, \;
(i^{\prime}, j^{\prime})\in \mathrm{C}(n,2,d),\;
      \varphi_{ji}
\in \C^{\times}\},
\end{equation}
where for each pair $j >i$ the product $w_jw_i$ occurs exactly once in $\cR_1$, and
there is unique pair $(i^{\prime}, j^{\prime})\in \mathrm{C}(n,2,d)$ such that
 $\Nor(w_jw_i) = \varphi_{ji}  w_{i^{\prime}}w_{j^{\prime}}=  \varphi_{ji}T_{\beta}$,
 with $\beta =\deg(w_jw_i) = \deg(w_{i^{\prime}} w_{j^{\prime}})$.
 One has
 \[\LM(f_{ji}) = w_jw_i > w_{i^{\prime}}w_{j^{\prime}}= T_{\beta}\in (\cT^n)_{2d}.\]
Moreover, for every pair $(i,j) \in \mathrm{C}(n,2,d)$ such that $i < j$, the product $w_iw_j = T_{\beta} \in (\cT^n)_{2d}$ occurs in a relation $w_jw_i -
\varphi_{ji}w_iw_j \in \cR_1$.
Each coefficient $\varphi_{ji}$ is a non-zero complex number, uniquely determined by $\q$.

\item
\label{thm:d-Veronese_relations2}
The set $\cR_2$
consists of exactly   $\binom{N+2}{2} - \binom{n+2d}{n}$ relations
\begin{equation}
  \label{eq:fij}
\cR_2 = \{ f_{ij} = w_i w_j - \varphi_{ij}
w_{i^{\prime}}w_{j^{\prime}}\mid (i,j) \in   \Mv (n,d), \;
(i^{\prime}, j^{\prime})\in \mathrm{C}(n,2,d), \;
\varphi_{ij}  \in \C^{\times}\},
\end{equation}
where for each pair  $(i,j) \in   \Mv (n,d)$ the word  $w_i w_j$
occurs exactly once in $\cR_2$, and determines uniquely a pair
$(i^{\prime},j^{\prime})\in \mathrm{C}(n,2,d)$ with
$i^{\prime}<j^{\prime}$, and a nonzero complex number
$\varphi_{ij}$ such that $\Nor(w_iw_j) = \varphi_{ij}
w_{i^{\prime}}w_{j^{\prime}} = \varphi_{ij}T_{\beta}$, with
$\beta=\deg(w_iw_j) = \deg(w_{i^{\prime}} w_{j^{\prime}})$.
 In particular,
 \[\LM(f_{ij}) = w_iw_j >  w_{i^{\prime}}w_{j^{\prime}}= T_{\beta}\in (\cT^n)_{2d}.\]

\item
\label{thm:d-Veronese_relations3}
The relations $\cR_1 \cup \cR_2$ imply a set $\cR_1^{\prime}$ of  $\binom{N+1}{2}$ additional relations:
 \begin{equation}
  \label{eq:gij}
\cR_1^{\prime} = \{w_j\cdot w_i -
g_{ji}  w_i\cdot w_j\mid g_{ji} \in \C^{\times},      0 \leq i < j \leq N\},
\end{equation}
where for each $i < j$ the coefficient  $g_{ji} = \frac{\varphi_{ji}}{\varphi_{ij}}$ is uniquely determined by the matrix $\q$.
We set $\varphi_{ij} = 1$ whenever $(i,j) \in \mathrm{C}(n,2,d)$.
\item
\label{thm:d-Veronese_relations4}
Conversely, the relations $\cR^{\prime}= \cR_1^{\prime} \cup\cR_2$ imply the relations $\cR_1$. Moreover, $\cR^{\prime}$ is also a complete set of independent relations for
the $d$-Veronese algebra $\cA^{(d)}$.
\end{enumerate}
    \end{thm}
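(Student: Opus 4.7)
The plan is to build the presentation directly from the identification $\cA \cong (\C \cT^n, \bullet)$ of Convention \ref{rmk:conventionpreliminary}. Under this identification $\cA^{(d)} = \bigoplus_k \C(\cT^n)_{kd}$ with the induced product, and its degree-one component has basis $\{w_0,\dots,w_N\} = (\cT^n)_d$. By Remark \ref{rmk:Ver_properties}(2) the algebra $\cA^{(d)}$ is one-generated and quadratic, and by Facts \ref{cor:skewpol}(6) the dimension of $(\cA^{(d)})_2$ equals $\dim \cA_{2d} = \binom{n+2d}{n}$. Hence the space of quadratic defining relations has dimension exactly $(N+1)^2 - \binom{n+2d}{n}$, and the target is to match this number with $|\cR_1|+|\cR_2|$.

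The key step for parts (1) and (2) is to compute $\Nor(w_i w_j)$ for every pair $(i,j)\in\{0,\dots,N\}^2$. Since all defining relations of $\cA$ are multi-degree-preserving binomials, Remark \ref{rmk:Normalform} gives $\Nor(w_i w_j) = \zeta_{ij}\, T_\beta$ with $\zeta_{ij}\in\C^\times$ uniquely determined by $\q$, where $\beta = \deg(w_i)+\deg(w_j)$. Lemma \ref{lem:orders}(1) then supplies a unique factorisation $T_\beta = w_{i'} w_{j'}$ with $(i',j')\in\mathrm{C}(n,2,d)$. Setting $\varphi_{ji}:=\zeta_{ji}$ for $i<j$ produces the relations $f_{ji}$ of $\cR_1$, and setting $\varphi_{ij}:=\zeta_{ij}$ for $(i,j)\in\Mv(n,d)$ produces the relations $f_{ij}$ of $\cR_2$. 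In both cases the leading monomial is the non-ordered side (the word that differs from $T_\beta$), because $T_\beta$ is the minimum of its multi-degree class under $<$.

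Parts (1) and (2) follow once one counts: $|\cR_1|+|\cR_2| = \binom{N+1}{2} + \binom{N+2}{2} - \binom{n+2d}{n} = (N+1)^2 - \binom{n+2d}{n}$, using Lemma \ref{lem:orders}(2). The leading monomials of $\cR_1 \cup \cR_2$ are pairwise distinct (those from $\cR_1$ are $w_j w_i$ with $j>i$, those from $\cR_2$ are $w_i w_j$ with $i\le j$ and $(i,j)\in\Mv(n,d)$), so the relations are linearly independent, and the cardinality matches the dimension of the relation space, forcing them to be a complete independent set. For part (3) I would split on whether $(i,j)\in\mathrm{C}(n,2,d)$ or $(i,j)\in\Mv(n,d)$: in the first case $w_i w_j = T_\beta$ is already ordered, so $\cR_1$ directly yields $g_{ji}=\varphi_{ji}$ (consistent with $\varphi_{ij}=1$); in the second case eliminating the common factor $w_{i'} w_{j'}$ between the matching relations of $\cR_1$ and $\cR_2$ gives $g_{ji} = \varphi_{ji}/\varphi_{ij}$. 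For part (4) the reverse substitution (replacing $w_i w_j$ in $\cR_1'$ using $\cR_2$) recovers $\cR_1$, and the same cardinality/distinct-leading-monomial argument shows $\cR'=\cR_1'\cup\cR_2$ is likewise a complete independent set.

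The main obstacle, and indeed the only delicate point, is the combinatorial alignment between $|\mathrm{C}(n,2,d)|$, $|\Mv(n,d)|$, and the dimension $\binom{n+2d}{n}$ of $(\cA^{(d)})_2$ forced by the Hilbert series. This is exactly what Lemma \ref{lem:orders} provides; once it is invoked, the rest of the proof is pure bookkeeping on leading monomials and scalar coefficients.
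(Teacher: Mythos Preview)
Your proposal is correct and follows essentially the same route as the paper: compute $\Nor(w_iw_j)=\varphi\,T_\beta$ via Remark~\ref{rmk:Normalform}, factor $T_\beta$ uniquely through Lemma~\ref{lem:orders}(1), count with Lemma~\ref{lem:orders}(2), and derive $\cR_1'$ by eliminating $w_{i'}w_{j'}$ between the two families. The only point you glide over, which the paper treats explicitly, is the verification that the pair $(i',j')$ satisfies the \emph{strict} inequality $i'<j'$ (equivalently, that $T_\beta$ is never a perfect square $w_k^2$ when $i\neq j$ or $(i,j)\in\Mv(n,d)$); this is needed for the statement of part~(2) and is dispatched by observing that $w_k^2\in(\cT^n)_{2d}$ forces $w_k=x_p^d$.
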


\begin{proof}
\begin{enumerate}
    \item Suppose that $0\leq i<j\leq N$. Then $w_j > w_i$, and it is not difficult to see that $M(j) > m(i)$, so $w_jw_i$ is not in normal form.
By Remark \ref{rmk:Normalform}, its normal form has the shape
$\Nor (w_jw_i) = \varphi_{ji}  T_{\beta},$ where $\beta = \deg(w_jw_i) = \alpha^i + \alpha^j$, and $\varphi_{ji} \in \C^\mathfrak{\times}$ is uniquely determined by the
entries of $\q$. By Lemma \ref{lem:orders},
$T_{\beta} = w_{i^{\prime}} w_{j^{\prime}}$ for a unique pair $(i^{\prime}, j^{\prime})\in \mathrm{C}(n,2,d)$
of ordered monomials $w_{i^{\prime}}\leq w_{j^{\prime}}$ of length $d$.
We claim that $w_{i^{\prime}}<w_{j^{\prime}}$.

Assume by contradiction that $w_{i^{\prime}}=w_{j^{\prime}} =
x_{i_1}x_{i_2} \dots x_{i_d}$, where $x_{i_1}\leq x_{i_2} \leq
\dots \leq x_{i_d}$. This implies that
$w_{i^{\prime}}w_{j^{\prime}} = w_{i^{\prime}}^2 = x_{i_1}x_{i_2}
\dots x_{i_d} x_{i_1}x_{i_2} \dots x_{i_d} \in (\cT^n)_{2d}$. But
this is possible if and only if $x_{i_k} = x_{i_1}$ for every
$k\in\{2,\dots,d\}$, that is $w_{i^{\prime}}= w_{j^{\prime}}=
(x_p)^d,$ for some $p\in\{0,\dots,n\}$, so $T_{\beta}
=(x_p)^{2d}$. In other words $\beta = (\beta_0, \dots, \beta_n)$,
where $\beta_p = 2d$ and $\beta_i =0$ for every $i \neq p$. One
has $\beta= \deg (w_jw_i) = \deg(w_j)+ \deg(w_i)= \alpha ^j
+\alpha^i,$ which together with $|w_i|=|w_j|= d$ imply $\alpha ^i
=\alpha^j$ and $w_i=w_j = (x_p)^d$, which is impossible, since by
assumption $i<j$. Hence $w_{i^{\prime}}< w_{j^{\prime}}$  and
$i^{\prime}< j^{\prime}$. We know that the equality $w_jw_i=
\Nor(w_jw_i)$ holds in $\cA$, hence it is an equality in
$\cA^{(d)}$. This implies that the equality $(w_jw_i) =
\varphi_{ji} w_{i^{\prime}} w_{j^{\prime}}$ holds in $\cA^{(d)}$,
for all $0 \leq i < j \leq N$. It follows that $\cA^{(d)}$
satisfies the relations $f_{ji}= 0$, for all $f_{ji} \in \cR_1$,
see (\ref{eq:fji}). Moreover, the relations satisfy the properties
given in part (\ref{thm:d-Veronese_relations1}). It is clear that
the order of $\cR_1$  is exactly  $\binom{N+1}{2}$.

\item
Suppose that $(i,j) \in \Mv(n,d)$. Then the following are equalities in $\cA$:
\[
w_i w_j = \Nor (w_iw_j) =\varphi_{ij}  T_{\beta},  \mbox{ where } T_{\beta} < w_iw_j,
\beta = \alpha^i + \alpha^j,
\]
and $\varphi_{ij} \in \C^{\times}$ is uniquely determined by the entries of $\q$. By Lemma
\ref{lem:orders}, $T_{\beta} = w_{i^{\prime}} w_{j^{\prime}}$ for a unique pair $(i^{\prime}, j^{\prime})\in \mathrm{C}(n,2,d)$.
We claim that $w_{i^{\prime}}<w_{j^{\prime}}$. As in part (1), assuming that $w_{i^{\prime}}= w_{j^{\prime}}$ we obtain that
$w_i=w_j = (x_p)^d$, but then $w_iw_j =(x_p^d)(x_p^d) \in (\cT^n)$, which contradicts our assumption $(i,j) \in \Mv(n,d)$.
The equality $w_i w_j = \Nor (w_iw_j)$ holds in $\cA$,  therefore it is an equality in $\cA^{(d)}$. We have shown that for every pair $(i,j) \in \Mv(n,d)$ there is unique pair  $(i^{\prime}, j^{\prime})\in \mathrm{C}(n,2,d)$
such that $i^{\prime}< j^{\prime}$ and $w_jw_i= \varphi_{ii} w_{i^{\prime}} w_{j^{\prime}}$ holds in $\cA^{(d)}$. Therefore $\cA^{(d)}$ satisfies the relations (\ref{eq:fij}) from $\cR_2$.
It is clear that all properties listed in part (\ref{thm:d-Veronese_relations2}) hold and $|\cR_2| = |\Mv(n,d)| = \binom{N+2}{2} - \binom{n+2d}{n}.$
Note that
\[\LM(\cR_1) = \{w_jw_i \mid  w_j > w_i \}\mbox{ and }\LM(\cR_2) = \{w_iw_j \mid w_i \leq w_j, (i,j) \in \Mv(n,d) \}.\]
It follows that
$\LM(\cR_1) \cap  \LM(\cR_2) = \emptyset$ and therefore $\cR_1 \cap \cR_2 = \emptyset$.
Hence the set of relations $\cR$ is a disjoint union $\cR =\cR_1 \sqcup \cR_2$ and

\begin{equation}
  \label{eq:cardinality_of_R}
  \begin{array}{ll}
|\cR| &= |\cR_1|+ |\cR_2|=\binom{N+1}{2}+\binom{N+2}{2} - \binom{n+2d}{n}\\
      &=(N+1)^2-\binom{n+2d}{n}\\
            &= \binom{n+d}{n}^2-\binom{n+2d}{n}.
\end{array}
\end{equation}
\item Assume now that $0 \leq i < j \leq N$. Two cases are possible.

(a) $(i,j) \in \mathrm{C}(n,2,d)$.
In this case $(i^{\prime}, j^{\prime}) = (i,j)$ and  $w_j w_i =
\varphi_{ji}  w_i w_j = \varphi_{ji} w_{i^{\prime}}w_{j^{\prime}}$,
so $g_{ji}= \varphi_{ji}$.

(b) $(i,j) \in \Mv(n,d)$.  Then the two relations
\[
w_j w_i =
\varphi_{ji}  w_{i^{\prime}}w_{j^{\prime}}\mbox{ and } w_i w_j =
\varphi_{ij}  w_{i^{\prime}}w_{j^{\prime}} \]
imply
\[
(\varphi_{ji})^{-1} w_j\cdot w_i = w_{i^{\prime}}w_{j^{\prime}} = (\varphi_{ij})^{-1} w_i\cdot w_j,
\]
and therefore
$w_j\cdot w_i = \frac{\varphi_{ji}}{\varphi_{ij}}w_i\cdot w_j$.
It follows that $w_jw_i = g_{ji}w_iw_j$, where the nonzero coefficient $g_{ji}= \frac{\varphi_{ji}}{\varphi_{ij}}$ is uniquely determined by $\q$.
\item This is analogous to (\ref{thm:d-Veronese_relations3}).\qedhere\end{enumerate}
 \end{proof}

Observe that Theorem \ref{thm:d-Veronese_relations} contains important numerical data about the d-Veronese $(\cA^n_\q)^{(d)}$, which will be used in the sequel,
and which we summarise below.
\begin{connotation}
\label{notaz: data}
Let $\cA^n_\q$ be the quantum space defined via a \mas $(n+1)\times(n+1)$ matrix $\q$. Let $d \geq 2$ and $N = \binom{n+d}{n}-1
$.
We associate to the $d$-Veronese $(\cA^n_\q)^{(d)}$  a  list  $D (\cA^n_\q)^{(d)}$  of invariants
uniquely determined by $\q$ and $d$.

Let $\mathfrak{F}_1= \{\varphi_{ji}\mid 0\leq i < j \leq N\}$ be the set of coefficients occurring in $\cR_1$ (see  (\ref{eq:fji})) and let $\mathfrak{F}_2= \{\varphi_{ij}\mid (i,j) \in
\Mv (n,d)\}$ be the set of coefficients occurring in $\cR_2$ (see (\ref{eq:fij})). Let $\g= \|g_{ij}\|$ be the   \mas $(N+1)\times (N+1)$ matrix whose  entries
$g_{ij}, 0 \leq i < j \leq N
$ are the coefficients occurring in $\cR_1^{\prime}$
see (\ref{eq:gij}).
 We collect this information about $(\cA^n_\q)^{(d)}$ in the following data:
 \begin{equation*}\label{eq:data2}
\begin{array}{ll}
D (\cA^n_\q)^{(d)}: \quad &  \q = \|q_{ij}\|; \\
                                   &\mathfrak{F}_1= \{\varphi_{ji} \mid 0\leq i < j \leq N\},    \text{the set of coefficients occurring in (\ref{eq:fji})}; \\
                                    & \mathfrak{F}_2= \{\varphi_{ij} \mid  i \leq j, (i,j) \in   \Mv (n,d)\},    \text{the set of coefficients occurring in (\ref{eq:fij})};\\
                                    &\g= \|g_{ij}\|, \text{ a \mas $(N+1)\times (N+1)$ matrix with}\\&\\
                                    &g_{ji}=   \begin{cases}
           1                                  & \text{for }  i=j \\
           (\varphi_{ji})/(\varphi_{ij})  & \text{for }  (i,j)\in \Mv (n,d)\mbox{ and } i<j\\
            \varphi_{ji}                      & \text{for }  (i,j) \in \mathrm{C}(n,2,d)\mbox{ and } i<j.
                         \end{cases}
\end{array}
\end{equation*}
\end{connotation}

\section{Veronese maps}
\label{sec:Veronese}
Let $n, d\in\N$ and let $N=\binom{n+d}{d}-1$. In this section, we introduce and study non-commutative analogues of the Veronese embeddings $V_{n,d}: \mathbb{P}^n \to
\mathbb{P}^N$.
The main result of the section is Theorem \ref{thm:preVeronese_ker}, which describes explicitly the reduced Gr\"obner bases for the kernel of the non-commutative
Veronese map.

We keep the notation and conventions from the previous sections, so $X_n = \{x_0, \dots, x_n\}$ and $\cT^n = \cT(X_n)\subset \asX$ is the set of ordered monomials (terms) in
the
alphabet $X_n$.
The set $(\cT^n)_d$ of all degree $d$ terms is enumerated according the degree-lexicographic order in $\asX$:
\begin{equation}
  \label{eq:Tnd}
(\cT^n)_d = \{ w_0 =x_0^d < w_1= (x_0)^{d-1}x_1 < \dots < w_N= x_n^d \}.
\end{equation}
We introduce a second set of variables $Y_N = \{y_0, \dots,
y_N\}$, and given an arbitrary \mas $(N+1)\times (N+1)$ matrix $\g
=\|g_{ij}\|$, we present the corresponding quantum space as
$\cA^N_\g= \C\asY / (\cR_{\g})$, where
\[\cR _{\g} := \{y_j y_i - g_{ij} y_i y_j     \mid 0\le i<j\le N \}.\]
\subsection{Definitions and first results}
\begin{lem}
    \label{lem:Ver_well-defined1}
Let $n, d\in\N$ and let $N=\binom{n+d}{d}-1$. Let $(\cT^n)_d$ and $\;Y_N$ be as above.
For every $(n+1)\times (n+1)$ \mas matrix $\q$, there exists a unique $(N+1)\times
(N+1)$ \mas matrix $\g =\|g_{ij}\|$ such that the assignment
    \[y_0 \mapsto w_0,  y_1 \mapsto w_1,  \dots ,  y_N  \mapsto w_N\]
extends to an algebra  homomorphism \[v_{n,d}:\cA^N_\g \rightarrow \cA^n_\q.\]
The entries of $\g$ are given explicitly in terms of the data $D((\cA^n_\q)^{(d)})$ of the d-Veronese $(\cA^n_{\q})^{(d)}$, see (\ref{eq:data2}).
The image of the map $v_{n,d}$ is the d-Veronese subalgebra $(\cA^n_{\q})^{(d)}$.
    \end{lem}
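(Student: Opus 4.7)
The plan is to derive the existence, uniqueness, and image description of $v_{n,d}$ directly from Theorem \ref{thm:d-Veronese_relations}, particularly parts (3) and (4), which exhibit the quadratic relations $\cR_1' = \{w_j w_i - g_{ji} w_i w_j \mid 0 \leq i < j \leq N\}$ holding in the $d$-Veronese subalgebra $(\cA^n_\q)^{(d)} \subset \cA^n_\q$ with explicitly determined coefficients $g_{ji} \in \C^{\times}$.

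First, I would take $\g = \|g_{ij}\|$ to be the $(N+1)\times(N+1)$ matrix from the data $D((\cA^n_\q)^{(d)})$ in Notation \ref{notaz: data}. The mas condition must be verified: $g_{ii} = 1$ holds by definition, and for $i < j$, since the equality $w_j w_i = g_{ji}\, w_i w_j$ holds in $\cA^n_\q$, we may multiply by $g_{ji}^{-1}$ to get $w_i w_j = g_{ji}^{-1}\, w_j w_i$. Applying Theorem \ref{thm:d-Veronese_relations}(3) to the ordered pair $(i,j)$ from the other side (or equivalently checking the two cases $(i,j) \in \mathrm{C}(n,2,d)$ and $(i,j) \in \Mv(n,d)$ separately using the explicit formulas $g_{ji} = \varphi_{ji}$ and $g_{ji} = \varphi_{ji}/\varphi_{ij}$) gives $g_{ij} = g_{ji}^{-1}$, so $\g$ is multiplicatively anti-symmetric.

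For existence of $v_{n,d}$, by construction the assignment $y_i \mapsto w_i$ maps each defining relation $y_j y_i - g_{ji} y_i y_j$ of $\cA^N_\g$ to the element $w_j w_i - g_{ji} w_i w_j$, which is zero in $\cA^n_\q$ by Theorem \ref{thm:d-Veronese_relations}(3). The universal property of the quadratic quotient $\cA^N_\g = \C\asY/(\cR_\g)$ therefore yields a well-defined algebra homomorphism $v_{n,d}: \cA^N_\g \to \cA^n_\q$. Since the image is the subalgebra of $\cA^n_\q$ generated by $w_0, \dots, w_N$, and since these are precisely the degree-$d$ monomials spanning $\cA_d = (\cA^{(d)})_1$, the image equals $(\cA^n_\q)^{(d)}$, which by Remark \ref{rmk:Ver_properties}(2) is one-generated in its own degree-one.

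For uniqueness of $\g$, suppose $\g' = \|g'_{ij}\|$ is another mas matrix for which $y_i \mapsto w_i$ extends to an algebra homomorphism $v': \cA^N_{\g'} \to \cA^n_\q$. Applying $v'$ to the relation $y_j y_i - g'_{ji} y_i y_j = 0$ yields $w_j w_i = g'_{ji}\, w_i w_j$ in $\cA^n_\q$. Combined with $w_j w_i = g_{ji}\, w_i w_j$, this gives $(g_{ji} - g'_{ji})\, w_i w_j = 0$, and since $\cA^n_\q$ is a domain (Facts \ref{cor:skewpol}(4)) with $w_i w_j \neq 0$, we conclude $g_{ji} = g'_{ji}$. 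I do not anticipate a substantive obstacle: the entire argument is a bookkeeping exercise on top of Theorem \ref{thm:d-Veronese_relations}, with the only mild care being the case split in verifying the mas condition, already handled in that theorem.
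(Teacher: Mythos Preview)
Your proposal is correct and follows essentially the same approach as the paper: both arguments hinge on Theorem \ref{thm:d-Veronese_relations} to identify the relations $w_j w_i = g_{ji} w_i w_j$ in $(\cA^n_\q)^{(d)}$, derive $g_{ji} = \varphi_{ji}/\varphi_{ij}$, and conclude that the image is the $d$-Veronese. The only cosmetic difference is in the uniqueness step, where the paper argues via $T_\beta$ being a basis element of $\cA^n_\q$ rather than invoking the domain property as you do; both are equally valid.
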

We call $v_{n,d}$ \textit{the $(n,d)$-Veronese map}.
    \begin{proof}
  Suppose $\q$ is an $(n+1)\times (n+1)$  \mas matrix, and let $\cA^n_\q$ be the corresponding quantum space.
 Assume that there exists an $(N+1)\times (N+1)$ \mas matrix $\g$ such that the map $v_{n,d}$ is a homomorphism of $\C$-algebras. Then
    \begin{align*}
        w_jw_i=v_{n,d}(y_jy_i)= v_{n,d}(g_{ji}
        y_iy_j)=g_{ji}w_iw_j,
        \end{align*}
        for every $0 \leq i \le j \leq N$.
By Theorem \ref{thm:d-Veronese_relations},
 \[w_j w_i = \varphi_{ji} T_{\beta}\mbox{ and }w_i w_j = \varphi_{ij} T_{\beta}, \]
 for every $0 \leq i<j \leq N$,
 where   $T_{\beta} \in (\cT^n)_{2d}$ is the unique ordered monomial of multi-degree $\beta = \deg(w_j) +\deg(w_i)$.
In the particular cases when $(i,j) \in \mathrm{C}(n,2,d)$, one has $w_i w_j = T_{\beta}$, so $\varphi_{ij}= 1$.
The nonzero coefficients $\varphi_{ji}$ and $\varphi_{ij}$ are uniquely determined by the matrix $\q$,  see (\ref{eq:data2}).
It follows that the
equalities
\begin{align*}
    \varphi_{ji} T_{\beta}= w_jw_i =g_{ji}w_iw_j = g_{ji}\varphi_{ij} T_{\beta}
        \end{align*}
hold in $\cA^n_\q$, so $(g_{ji}\varphi_{ij} -\varphi_{ji}) T_{\beta} = 0$. But $T_{\beta}$ is in the $\C$-basis of $\cA^n_\q$,
and therefore
        \begin{equation}
        \label{eq:gji}
g_{ji} =  \frac{\varphi_{ji}}{\varphi_{ij}}\in \C^{\times},
        \end{equation}
        for all $0 \leq i \le j \leq N$, which agrees with (\ref{eq:data2}). This determines a unique \mas matrix $\g$ with the required properties, and therefore the quantum
space $\cA^{N}_{\g}$
is also uniquely determined.
The image of $v_{n,d}$ is the subalgebra of $\cA^n_\q$ generated by the ordered monomials $\cT_d$, which by Theorem \ref{thm:d-Veronese_relations} is exactly the $d$-Veronese
$(\cA^n_\q)^{(d)}$.

Conversely, if $\g =\|g_{ij}\|$ is an $(N+1)\times (N+1)$ matrix
whose entries satisfy (\ref{eq:gji}) then $\g$ is a \mas matrix
which determines a quantum space $\cA^N_\g$  and
the Veronese map $v_{n,d}:\cA^N_\g \rightarrow \cA^n_\q$, $y_i
\mapsto w_i, 0\leq i \leq N,$ is well-defined.
    \end{proof}

We fix an $(n+1)\times (n+1)$ \mas matrix $\q$ defining the
quantum space $\cA^n_\q$. Let $\cA^N_\g$ be the quantum space
defined via the $(N+1)\times (N+1)$  matrix $\g$ from Lemma
\ref{lem:Ver_well-defined1}. To simplify notation, as in the
previous subsection, we shall write $\cA = \cA^n_\q$. We know that
there is a standard finite presentation $\cA^N_\g= \C\asY /
(\cR_{\g})$, where
\begin{equation}
 \label{eq:Rg}
 \cR _{\g} := \{y_j y_i - g_{ji} y_i y_j     \mid 0\le i<j\le N \}
 \end{equation}
is the reduced Gr\"{o}bner basis of the ideal $J=(\cR_{\g}) = \ker
\rho $, where $\rho$ is the canonical projection
\begin{equation}\label{eq:projection}\rho : \C\asY \to
\C\asY / (\cR_{\g}) =\cA^N_\g.\end{equation}

We can lift the Veronese map $v_{n,d}:\cA^N_\g \to \cA$ to a uniquely determined homomorphism $V: \C\asY \to \cA^{(d)}$
extending the assignment
\[y_0 \mapsto w_0,  y_1 \mapsto w_1,  \dots ,  y_N  \mapsto w_N.\]
It is clear that the map $V$ is surjective, since the restriction  $V_{|Y_N}: Y_N \rightarrow (\cT^n)_d$ is bijective, and
the set of ordered monomials $(\cT^n)_d$ generates $\cA^{(d)}$.

Let $K:= \ker V \subset \C\asY$.
We want to find the reduced Gr\"{o}bner basis $\cR_0$ of the ideal $K$ with respect to the degree-lexicographic order $\prec$ on $\asY$, where $y_0 \prec \dots
\prec
y_n$.

Heuristically, we use the explicit information on the d-Veronese subalgebra $\cA^{(d)}$ given in terms of generators and relations in Theorem \ref{thm:d-Veronese_relations},
\eqref{eq:fji}, and \eqref{eq:fij}.
 In each of these relations we replace $w_i$ with $y_i$, $0\leq i \leq N,$
preserving the remaining data (the coefficients and the sets of indices),
 and obtain a polynomial in $\C\asY$.
This yields two disjoint sets of linearly independent quadratic binomials $\Re_1$ and $\Re_2$ in $\C\asY$:
\begin{enumerate}
\item the set $\Re_1$, corresponding to the set $\cR_1$ defined in \eqref{eq:fji},
consists of $\binom{N+1}{2}$ quadratic relations:
\begin{equation}
  \label{eq:Fji}
\Re_1= \{F_{ji} = y_jy_i - \varphi_{ji}y_{i^{\prime}}y_{j^{\prime}}\mid 0 \leq i < j \leq N,  i^{\prime}<  j^{\prime},    (i^{\prime},   j^{\prime})\in \mathrm{C}(n,2,
d),
  y_jy_i \succ y_{i^{\prime}}y_{j^{\prime}},         \varphi_{ji} \in \C^{\times}\};
\end{equation}
\item the set $\Re_2$, corresponding to the set $\cR_2$ defined in \eqref{eq:fij},
has exactly   $\binom{N+2}{2} - \binom{n+2d}{n}$ relations:
\begin{equation}
  \label{eq:Fij}
  \Re_2= \{F_{ij} = y_i y_j - \varphi_{ij} y_{i^{\prime}}y_{j^{\prime}}\mid (i,j) \in   \Mv (n,d),  i^{\prime}< j^{\prime},  (i^{\prime}, j^{\prime})\in
\mathrm{C}(n,2,d),\\
        y_iy_j \succ y_{i^{\prime}}y_{j^{\prime}},   \varphi_{ij} \in \C^{\times}\}.
\end{equation}\end{enumerate}

There is one more set which is contained in $K$:  the set $\cR _{\g}$ 
of defining relations for $\cA^N_\g$. Note that $\cR _{\g}$
corresponds exactly to $\cR_1^{\prime}$ from (\ref{eq:gij}). We
set $\Re = \Re_1\cup\Re_2$ and $\Re^{\prime} = \cR
_{\g}\cup\Re_2$. It is not difficult to see that
there are equalities of ideals in $\C\asY$:
\[
(\Re)=(\Re_1, \Re_2) = (\Re^{\prime})=  ( \cR _{\g},  \Re_2)
\]
and that the set of relations
$\Re$ and $\Re^{\prime}$ are equivalent.

It is clear that the set $\Re = \Re_1\cup \Re_2$ of quadratic polynomials in $\C \asY$ and the set $\cR=
\cR_1\cup \cR_2$ of relations of the d-Veronese subalgebra $\cA^{(d)}$
from Theorem \ref{thm:d-Veronese_relations} have the same cardinality. In fact
\begin{equation}
  \label{eq:orderRe}
|\Re^{\prime}|= |\Re|= |\cR| = (N+1)^2 - \binom{n+2d}{n},
\end{equation}
as computed in (\ref{eq:cardinality_of_R}).
We shall prove that the set $\Re = \Re_1\cup \Re_2$ is the reduced Gr\"{o}bner basis of $K$, while $\Re^{\prime}$ is a minimal Gr\"{o}bner basis of $K$.

\begin{thm}
    \label{thm:preVeronese_ker}
With notation as above, let
$V: \C\asY \rightarrow \cA^{(d)}$ be the algebra homomorphism extending the assignment \[y_0 \mapsto w_0,  y_1 \mapsto w_1,  \dots ,  y_N  \mapsto w_N,\]
let $K$ be the kernel of $V$.
Let $\Re = \Re_1 \cup \Re_2$ be the set of quadratic polynomials given in
(\ref{eq:Fji}) and  (\ref{eq:Fij}), and  let
$\Re^{\prime} = \cR _{\g}\cup\Re_2$, where $\cR _{\g}$ is given in (\ref{eq:Rg}). Then
\begin{enumerate}
\item
$\Re$ is the reduced Gr\"{o}bner basis of the ideal $K$.
\item
$\Re^{\prime}$ is a minimal Gr\"{o}bner basis of the ideal $K$.
\end{enumerate}
    \end{thm}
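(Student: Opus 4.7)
My plan is to first verify $\Re \subset K$, then apply the quadratic Gr\"obner basis criterion of Lemma \ref{lem:diamondquadratic} to show that $\Re$ is a Gr\"obner basis of the ideal it generates, and finally match Hilbert functions degree-by-degree to conclude $(\Re) = K$. Part (2) will then follow from part (1) and Theorem \ref{thm:d-Veronese_relations}(3)(4). For the first step, every element of $\Re_1$ or $\Re_2$ arises from a relation of $\cR_1 \cup \cR_2$ by the substitution $w_k \mapsto y_k$; since those relations hold in $\cA^{(d)}$ by Theorem \ref{thm:d-Veronese_relations}, the map $V$ annihilates them and $(\Re) \subseteq K$.

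The key combinatorial step is to describe $N(\Re)_m$. By construction $\LM(\Re_1) = \{y_jy_i : 0\le i<j\le N\}$ and $\LM(\Re_2) = \{y_iy_j : i\le j,\ (i,j)\in\Mv(n,d)\}$, and these two sets are disjoint and exhaust precisely the length-two words $y_iy_j$ for which $(i,j)\notin\mathrm{C}(n,2,d)$. Consequently $y_{i_1}\cdots y_{i_m}$ is normal modulo $\Re$ iff $(i_k,i_{k+1})\in\mathrm{C}(n,2,d)$ for every consecutive pair, which is exactly the condition that $w_{i_1}w_{i_2}\cdots w_{i_m}$ be an ordered monomial in $(\cT^n)_{md}$. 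Extending the bijections $\Phi,\Psi$ of Lemma \ref{lem:orders}, the rule $y_{i_1}\cdots y_{i_m}\mapsto w_{i_1}\cdots w_{i_m}$ then induces a bijection $N(\Re)_m \to (\cT^n)_{md}$; in particular $|N(\Re)_3| = \binom{n+3d}{n} = \dim \cA^{(d)}_3$.

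The surjection $B := \C\asY/(\Re) \twoheadrightarrow \cA^{(d)}$ obtained from $\Re \subset K$ forces $\dim B_3 \ge \dim \cA^{(d)}_3 = |N(\Re)_3|$, while Corollary \ref{cor:diamonlemma} gives the reverse inequality, so equality holds and Lemma \ref{lem:diamondquadratic} applies: $\Re$ is a Gr\"obner basis of $(\Re)$. Hence $\dim B_m = |N(\Re)_m| = \dim\cA^{(d)}_m$ for every $m$, the surjection $B \twoheadrightarrow \cA^{(d)}$ is an isomorphism, and $(\Re) = K$. Reducedness is then immediate: each trailing term $y_{i'}y_{j'}$ has $(i',j')\in\mathrm{C}(n,2,d)$ with $i'<j'$, hence lies in $N(\Re)_2$; minimality holds because leading monomials in $\Re_1$ have first index strictly greater than the second, while those in $\Re_2$ have first index no larger than the second, making all leading monomials pairwise distinct.

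For (2), Theorem \ref{thm:d-Veronese_relations}(3)(4) gives the equality of ideals $(\Re) = (\Re')$, and since $\LM(\cR_\g) = \LM(\Re_1)$ one has $\LM(\Re') = \LM(\Re)$; thus $\Re'$ is a Gr\"obner basis of $K$ with pairwise distinct leading monomials and the same cardinality as $\Re$, so it is minimal. It fails to be reduced because for $(i,j)\in\Mv(n,d)$ with $i<j$ the trailing term $y_iy_j$ of $y_jy_i - g_{ji}y_iy_j \in \cR_\g$ coincides with the leading monomial of $F_{ij}\in\Re_2$. The principal difficulty is the combinatorial identification of $N(\Re)_m$ with $(\cT^n)_{md}$: one must check that the leading monomials of $\Re_1$ and $\Re_2$ together cover precisely those degree-two words in $\asY$ whose image under $y_k\mapsto w_k$ fails to be an ordered monomial of $\cA^n_\q$, which is exactly what makes the degree-three dimension count match and unlocks Lemma \ref{lem:diamondquadratic}.
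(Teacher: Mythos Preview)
Your proof is correct and follows essentially the same strategy as the paper: verify $\Re\subset K$, identify $N(\Re)_m$ with $(\cT^n)_{md}$ via the bijection extending Lemma~\ref{lem:orders}, and invoke Lemma~\ref{lem:diamondquadratic} through the degree-three count $|N(\Re)_3|=\binom{n+3d}{n}=\dim\cA^{(d)}_3$.

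There is one mild but noteworthy difference in the order of the argument. The paper first establishes $(\Re)=K$ by a degree-two dimension count, relying on the external fact (Remark~\ref{rmk:Ver_properties}, quoting \cite{PoPo}) that the $d$-Veronese of a quadratic algebra is again quadratic, hence $K$ is generated in degree two; only afterwards does it apply Lemma~\ref{lem:diamondquadratic}. You instead first prove that $\Re$ is a Gr\"obner basis of $(\Re)$, and then deduce $(\Re)=K$ from the resulting equality of Hilbert functions $\dim B_m=|N(\Re)_m|=\dim\cA^{(d)}_m$ in \emph{all} degrees, using only the surjection $B\twoheadrightarrow\cA^{(d)}$. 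Your route is slightly more self-contained, since it does not need to invoke quadraticity of $\cA^{(d)}$; the paper's route, on the other hand, establishes $K=(\Re)$ earlier and more directly. Both lead to the same reducedness and minimality checks, and your treatment of part~(2) via $\LM(\cR_\g)=\LM(\Re_1)$ matches the paper's Remark~\ref{rmk:LMremark1}.
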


\begin{proof}
We start with a general observation. The quantum space $\cA
=\cA^n_{\q}$ is a quadratic algebra, therefore its $d$-Veronese
$\cA^{(d)}\cong \C \asY/K$ is also quadratic, see Remark
\ref{rmk:Ver_properties}. Hence $K$ is generated by quadratic
polynomials and it is graded by length.

\begin{rmk}
    \label{rmk:LMremark1}
It is clear that the sets of leading monomials and the sets of normal monomials satisfy the following equalities in $\asY$:
\begin{equation}
  \label{eq:LMonomial}
  \begin{array}{l}
\LM(\cR _{\g}) = \LM(\Re_1) = \{y_jy_i \mid 0 \leq i < j \leq N\}\\
\LM(\Re_2) = \{y_iy_j \mid  (i,j) \in \Mv (n,d)\}\\
\LM (\Re) = \LM(\Re_1)\cup \LM(\Re_2) = \LM (\Re^{\prime})\\
N(\Re) = N(\Re^{\prime}).
\end{array}
\end{equation}
Therefore  $\Re^{\prime}$ is a minimal Gr\"{o}bner basis of the ideal $K$
 if and only if $\Re$ is a reduced Gr\"{o}bner basis of $K$.
\end{rmk}

By Theorem \ref{thm:d-Veronese_relations}, the quadratic polynomials $F_{ji}(Y_n)$ in (\ref{eq:Fji}) and $F_{ij}(Y_n)$ in (\ref{eq:Fij}) satisfy
\[V(F_{ji}(y_0, \dotsc, y_N)) = f_{ji}(w_0, \dotsc, w_N) = 0,\mbox{ for every }0 \leq i < j \leq N \]
and
\[V(F_{ij}(y_0, \dotsc, y_N)) = f_{ij}(w_0, \dotsc, w_N) = 0,\mbox{ for every }(i,j) \in \Mv (n,d).\]
Thus $\Re \subset K$ and, in a similar way, $\Re^{\prime}\subset K$.
We shall show that $\Re$ is a reduced Gr\"{o}bner basis of $K$.

As usual, $N(K) \subset \C\asY$ denotes the set of normal monomials  modulo $K$, and $N(\Re) \subset \C\asY$ denotes the set of normal words modulo $\Re$.
In general, \[N(K) \subseteq N(\Re),\] and by  Corollary~\ref{cor:diamonlemma} equality holds if and only if $\Re$ is a Gr\"{o}bner basis of $K$.
Recall from Subsection \ref{subs:quadratic} that there are isomorphisms of vector spaces
\[\C \asY = K \oplus \C N(K), \quad \mbox{and} \quad \C N(K) \cong \C\asY/ K \cong \cA^{(d)}.\]
The ideal $K$ is graded by length, i.e. $K = \bigoplus_{j \geq 0} K_j$, with $K_0 = K_1 = 0$.

For $j\ge 0$, let $N(K)_j$ be the set of normal words of length $j$, with the convention that $N(K)_0 = \{1\}$,  $N(K)_1 = Y_n$.
As vector spaces,
 \[(\C\asY)_j = K_j\oplus \C N(K)_j, \quad \mbox{and} \quad \C N(K)_j \cong \cA^{(d)}_j = \cA_{jd}, \mbox{ for every }j \geq 2. \]
In particular, $(\C\asY)_2 = K_2\oplus \C N(K)_2$,
 so
\[\dim (\C\asY)_2 = \dim K_2 + \dim (\C N(K)_2) =  \dim K_2 + \dim \cA_{2d}.\]
We know that
\[\dim \cA_{2d}= \binom{n +2d}{n} \quad \mbox{and} \quad \dim (\C\asY)_2 = |(Y_n)^2|= (N+1)^2,\]
 where $Y_n^2$ is the set of all words of length two in $\asY$.
This, together with  \eqref{eq:orderRe}, implies
\[\dim K_2 =(N+1)^2 -\binom{n +2d}{n} = |\Re|.\]
Clearly, the set $\Re$ consists of linearly independent polynomials, therefore $\dim K_2 = \dim \C \Re = |\Re|$. It follows that
$\C \Re = K_2,$ and since $K$ is generated by quadratic polynomials,
one has $K = (\Re)$.

We shall use the following remark.
\begin{rmk}
    \label{rmk:NRe}
The following are equivalent:
\begin{enumerate}
\item $y_iy_jy_k  \in N(\Re)_3$;
\item $y_iy_j  \in N(\Re)_2$ and $y_jy_k  \in N(\Re)_2$;
\item $(i,j,k) \in \mathrm{C}(n,3, d)$.
\end{enumerate}
Moreover, there are equalities
\begin{equation}
  \label{eq:orderNR3}
|N(\Re)_3|= |\mathrm{C}(n,3, d)| = \binom{n+3d}{n}.
\end{equation}
\end{rmk}
We know that $\cA^{(d)}_3 = \cA_{3d}$, so $\dim\cA^{(d)}_3 = \dim \cA_{3d} =  \binom{n+3d}{n},$
which together with  (\ref{eq:orderNR3}) imply
\[|N(\Re)_3| = \dim \cA_{3d}.\]
It follows from
Lemma \ref{lem:diamondquadratic} that the set $\Re$ is a Gr\"{o}bner basis of the ideal $K$.
The set of leading monomials $\LM(\Re)$ is an antichain of monomials, hence $\Re$ is a minimal Gr\"{o}bner basis.
For $j > i$, every $F_{ji} \in \Re$ defined in \eqref{eq:Fji} is in normal form modulo $\Re \setminus \{F_{ji}\}$.
Similarly, for $(i,j) \in
 \Mv(n,d)$,  every $F_{ij} \in \Re$  defined in \eqref{eq:Fij} is in normal form
modulo $\Re \setminus \{F_{ij}\}$. We have proven that $\Re$ is a reduced Gr\"{o}bner basis of the ideal $K$.

 It follows from Remark \ref{rmk:LMremark1} that $\Re^{\prime}$ is a minimal Gr\"{o}bner basis of $K$.
 \end{proof}

\subsection{The Veronese map $v_{n,d}$ and the reduced Gr\"{o}bner basis of its kernel}
\label{subsec:grobner_veronese_map}
\begin{thm}
    \label{thm:Veronese_ker}
Let $n, d\in\N$ and $N=\binom{n+d}{d}-1$. Let
$\cA^n_\q$ be a quantum space defined by an $(n +1)\times (n +1)$ deformation matrix $\q$ and let $\cA^N_\g$ be the quantum space
 whose \mas $(N+1)\times (N+1)$ matrix $\g$ is determined by Lemma \ref{lem:Ver_well-defined1}.
 Let
\[v_{n,d} : \cA^N_{\g} \rightarrow  \cA^n_{\q}\]
be the Veronese map extending the assignment \[y_0 \mapsto w_0, \ y_1 \mapsto w_1, \ \dotsc ,\  y_N  \mapsto w_N.\]
\begin{enumerate}
\item
The image of $v_{n,d}$ is the d-Veronese subalgebra $(\cA^n_\q)^{(d)}$ of  $\cA^n_\q$.
\item
The kernel  $\mathfrak{K} := \ker (v_{n,d})
$  of the Veronese map has a reduced Gr\"{o}bner basis
consisting of exactly $\binom{N+2}{2}- \binom{n+2d}{n}$ binomials:
\begin{equation}
\label{eq:ker_Ver}
\cR_\q^{\rV}:=\{y_i y_j - \varphi_{ij} y_{i^{\prime}}y_{j^{\prime}} \mid (i,j) \in   \Mv (n,d),   (i^{\prime}, j^{\prime})\in \mathrm{C}(n,2,d), \varphi_{ij}
\in \C^{\times}\},
\end{equation}
where $\Nor(v_{n,d}(y_i y_j)) = \varphi_{ij} v_{n,d}(y_{i^{\prime}}y_{j^{\prime}})$,
 $y_iy_j \succ
y_{i^{\prime}}y_{j^{\prime}}$,
    and  $\varphi_{ij} \in \C^{\times}$ are invariants of
        $(\cA^n_\q)^{(d)}$
            given in  Notation \ref{notaz: data}.
\end{enumerate}
\end{thm}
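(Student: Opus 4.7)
The plan is to reduce the proof to the quantum-space Diamond Lemma (Lemma \ref{lem:quant_diamondquadratic}), leveraging the fact that Theorem \ref{thm:preVeronese_ker} has already resolved the analogous question in the free associative algebra. Part (1) is immediate from Lemma \ref{lem:Ver_well-defined1}, so the real task is part (2).

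First I would identify the kernel $\mathfrak{K}$ as the ideal of $\cA^N_\g$ generated by $\cR_\q^{\rV}$. Factoring the lifted map as $V = v_{n,d}\circ \rho$, where $\rho:\C\asY \to \cA^N_\g$ is the canonical projection from (\ref{eq:projection}), one has $\mathfrak{K} = \rho(\ker V)$. By Theorem \ref{thm:preVeronese_ker}(2), $\ker V$ is generated in $\C\asY$ by the minimal Gr\"obner basis $\Re^{\prime} = \cR_\g \cup \Re_2$; since $\ker \rho = (\cR_\g)$, this yields $\mathfrak{K} = (\rho(\Re_2))$. Now every $(i,j) \in \Mv(n,d)$ satisfies $i \leq j$ and every $(i^{\prime}, j^{\prime}) \in \mathrm{C}(n,2,d)$ satisfies $i^{\prime} \leq j^{\prime}$, so each binomial in $\Re_2$ is already written in ordered-monomial form and coincides, as an element of $\C\cT(Y_N) = \cA^N_\g$, with the corresponding element of $\cR_\q^{\rV}$. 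Hence $\mathfrak{K} = (\cR_\q^{\rV})$.

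Next I would verify the Gr\"obner basis property via Lemma \ref{lem:quant_diamondquadratic}. Let $B = \cA^N_\g / \mathfrak{K}$. Part (1) and the first isomorphism theorem give a graded algebra isomorphism $B \cong (\cA^n_\q)^{(d)}$, so Facts \ref{cor:skewpol} yield
\[
\dim B_3 \;=\; \dim (\cA^n_\q)^{(d)}_3 \;=\; \dim (\cA^n_\q)_{3d} \;=\; \binom{n+3d}{n}.
\]
On the other hand, $\LM(\cR_\q^{\rV}) = \{y_iy_j \mid (i,j) \in \Mv(n,d)\}$, so an ordered monomial $y_iy_jy_k \in \cT(Y_N)_3$ (with $i \leq j \leq k$) belongs to $N_{\prec_0}(\cR_\q^{\rV})_3$ precisely when neither $(i,j)$ nor $(j,k)$ lies in $\Mv(n,d)$, equivalently when both belong to $\mathrm{C}(n,2,d)$, which is the defining condition for $(i,j,k) \in \mathrm{C}(n,3,d)$. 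Lemma \ref{lem:orders} then gives $|N_{\prec_0}(\cR_\q^{\rV})_3| = |\mathrm{C}(n,3,d)| = \binom{n+3d}{n} = \dim B_3$, so Lemma \ref{lem:quant_diamondquadratic} concludes that $\cR_\q^{\rV}$ is a quadratic Gr\"obner basis of $\mathfrak{K}$.

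Finally, I would check reducedness and record the cardinality. Each binomial $y_iy_j - \varphi_{ij}y_{i^{\prime}}y_{j^{\prime}} \in \cR_\q^{\rV}$ has leading monomial $y_iy_j$ with $(i,j) \in \Mv(n,d)$, while the subleading term satisfies $(i^{\prime}, j^{\prime}) \in \mathrm{C}(n,2,d)$, so $y_{i^{\prime}}y_{j^{\prime}}$ is normal modulo the remaining binomials. The leading monomials, all of length two, are pairwise non-divisible, so the basis is minimal and hence reduced, and the count $|\cR_\q^{\rV}| = |\Mv(n,d)| = \binom{N+2}{2} - \binom{n+2d}{n}$ is exactly Lemma \ref{lem:orders}. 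The main --- and actually rather mild --- obstacle in the whole argument is the transfer from the free-algebra setting of Theorem \ref{thm:preVeronese_ker} to the quantum-space setting, and this is mediated entirely by the quantum Diamond Lemma (Lemma \ref{lem:quant_diamondquadratic}) together with the Hilbert series formula of Facts \ref{cor:skewpol}.
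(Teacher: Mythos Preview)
Your proof is correct and follows essentially the same route as the paper: part (1) via Lemma \ref{lem:Ver_well-defined1}, then the Gr\"obner basis property via the degree-three count and Lemma \ref{lem:quant_diamondquadratic}, exactly as in the paper's first proof. The only minor deviation is in establishing $\mathfrak{K}=(\cR_\q^{\rV})$: you deduce it from the factorisation $V=v_{n,d}\circ\rho$ and Theorem \ref{thm:preVeronese_ker}, while the paper argues directly by a dimension count in degree two (using that $(\cA^n_\q)^{(d)}$, hence $\mathfrak{K}$, is quadratic); both arguments are short and equivalent, and the paper's second proof in fact also passes through Theorem \ref{thm:preVeronese_ker} together with \cite[Proposition 9.3]{Mo94}.
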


\begin{proof}
Part (1) follows from Lemma \ref{lem:Ver_well-defined1}. For part (2), we first prove that the set $\cR_\q^{\rV}$ generates $\mathfrak{K}$. The proof is similar to the argument describing the kernel $K = \ker V$ in Theorem
\ref{thm:preVeronese_ker}.

Note that $\cR_\q^{\rV}\subset \mathfrak{K}$. Indeed,
by direct computation, one shows that  $v_{n,d}(\cR_\q^{\rV}) = \cR_2,$ the set of relations of the d-Veronese $(\cA^n_\q)^{(d)}$ given in (\ref{eq:fij}),
so $\cR_\q^{\rV}\subset \mathfrak{K}$.
Moreover, it follows from (\ref{eq:ker_Ver}) that for each pair $(i,j) \in   \Mv (n,d)$ the set $\cR_\q^{\rV}$ contains exactly one element, namely
$y_i y_j - \varphi_{ij} y_{i^{\prime}}y_{j^{\prime}}$, where $\Nor  (v_{n,d}(y_i y_j)) = \varphi_{ij} v_{n,d}(y_{i^{\prime}}y_{j^{\prime}})$. Here we consider the
normal form
$\Nor (v_{n,d}(y_i y_j)) = \Nor  (w_iw_j) = \varphi_{ij} w_{i^{\prime}}w_{j^{\prime}}$, see Theorem \ref{thm:d-Veronese_relations}(\ref{thm:d-Veronese_relations2}).
Hence 
\begin{equation}
\label{eq: card_cR}
|\cR_\q^{\rV}|= |\Mv (n,d)|= \binom{N+2}{2} -\binom{n+2d}{n},
\end{equation}
where the last equality follows from Lemma \ref{lem:orders}. By Convention \ref{rmk:conventionpreliminary}, we identify $\cA^N_\g \cong (\C \cT(Y_N), \bullet)$. Our goal is to show that the two set of normal words $N(\mathfrak{K})$ and  $N(\cR_\q^{\rV})$ coincide, where
\[N (\mathfrak{K})= N_{\prec_0}(\mathfrak{K})\subset \C \cT(Y_N), \quad \text{and} \quad N (\cR_\q^{\rV})= N_{\prec_0} (\cR_\q^{\rV})\subset \C \cT(Y_N),\]
as in Definition \ref{dfn:normal_terms}.
There are obvious isomorphisms of vector spaces
\[\cA^N_\g = \C\cT(Y_N) = \mathfrak{K} \oplus \C N(\mathfrak{K}).\]
For simplicity of notation, we set $B = \cA^N_\g / \mathfrak{K}$ and consider the canonical grading of $B$ induced by the grading of $\cA^N_\g$.
Then
\[B = 
\cA^N_\g/\ker (v_{n,d}) \cong \im (v_{n,d}) = (\cA^n_\q)^{(d)},\]
so there are equalities
\begin{equation}
\label{eq:direct_sum_graded}
(\cA^N_\g)_m = (\C\cT(Y_N))_m = (\mathfrak{K})_m \oplus (\C N(\mathfrak{K}))_m\mbox{ and } B_m \cong (\cA^n_\q)^{(d)}_m =(\cA^n_\q)_{md},
\end{equation}
for every $m\ge 2$. In particular, for $m =2$ one has $B_2 \cong (\cA^n_\q)^{(d)}_2 =(\cA^n_\q)_{2d}$ and
\[\dim (\cA^N_\g)_2 = \dim (\mathfrak{K})_2 + \dim (\cA^n_\q)_{2d}\mbox{, hence } \binom{N+2}{2}= \dim (\mathfrak{K})_2 + \binom{n+2d}{2},\]
 which implies
\[\dim (\mathfrak{K})_2 = \binom{N+2}{2} - \binom{n+2d}{2} = |\cR_\q^{\rV}|.  \]
It is clear that the set $\cR_\q^{\rV}$ is linearly independent, so it is a basis of the graded component $\mathfrak{K}_2$,
and $(\mathfrak{K})_2 = \C \cR_\q^{\rV}$. But the ideal $\mathfrak{K}$ is generated by homogeneous polynomials of degree $2$,
therefore
\begin{equation}
\label{eq:K-generators}
\mathfrak{K} = \mathfrak{K}_2= (\cR^{\rV}_{\q}),
\end{equation}
so $\cR_\q^{\rV}$ generates the kernel $\mathfrak{K}$.

We are now ready to prove that $\cR_\q^{\rV}$ is a Gr\"{o}bner basis of $\mathfrak{K}$. We shall provide two proofs.

\medskip \noindent
\textsl{First proof}.
Here we use an analogue of Remark \ref{rmk:NRe} in the settings of a quantum space.
\begin{rmk}
    \label{rmk:NRe1} The following are equivalent:
\begin{enumerate}
\item $y_iy_jy_k  \in N(\cR^{\rV}_{\q})_3$;
\item $y_iy_j  \in N(\cR^{\rV}_{\q})_2$ and $y_jy_k  \in N(\cR^{\rV}_{\q})_2$;
\item $(i,j,k) \in \mathrm{C}(n,3, d)$.
\end{enumerate}
Moreover there are equalities
\begin{equation}
  \label{eq:orderNR31}
|N(\cR^{\rV}_{\q})_3|= |\mathrm{C}(n,3, d)| = \binom{n+3d}{n}.
\end{equation}
\end{rmk}
By (\ref{eq:direct_sum_graded}),  
$\dim B_3 = \dim \cA_{3d} =  \binom{n+3d}{n},$
which together with  (\ref{eq:orderNR31}) implies
\[|N(\cR^{\rV}_{\q})_3| =\dim B_3.\]
Now Lemma \ref{lem:quant_diamondquadratic} implies that $\cR^{\rV}_{\q}$  is a Gr\"{o}bner basis of the ideal
$\mathfrak{K} = \ker (v_{n,d})$.
It is clear that $\cR^{\rV}_{\q}$ is the reduced Gr\"{o}bner basis of $\mathfrak{K}$.

\medskip \noindent
\textsl{Second proof}.
 We shall use Theorem \ref{thm:preVeronese_ker} and ideas from \cite{Mo94}.
By (\ref{eq:K-generators}), we know  that the set $\cR^{\rV}_{\q}$ generates $\mathfrak{K}$.
Consider now the ideal  $\Nor^{-1} (\mathfrak{K})$ in  $\C\asY$.
It is easy to see that
\[\Nor^{-1}(\mathfrak{K}) = \mathfrak{J}_\g + (\cR^{\rV}_{\q})= (\cR_\g)  + (\cR^{\rV}_{\q}) = K,\]
where $K= \ker V$  is the kernel of the  epimorphism $V: \C\asY \rightarrow \cA^{(d)}$ from Theorem
\ref{thm:preVeronese_ker}.
Indeed, the polynomials in $\cR_\g$ and $\cR^{\rV}_{\q}$, considered as elements of the free associative algebra $\C\asY$,
satisfy
\[\cR_\g =\Re_1^{\prime} \mbox{ and }\cR^{\rV}_{\q}= \Re_2 ,\]
where $\Re_1^{\prime}$ and $\Re_2$ are the relations given
in Theorem  \ref{thm:preVeronese_ker},
see \eqref{eq:gij} and \eqref{eq:Fij}.
Hence by the same theorem, the set $\Re^{\prime} = \cR_\g \cup\cR^{\rV}_{\q}$ is a minimal Gr\"{o}bner basis of the ideal $K$.
Theorem
\ref{thm:preVeronese_ker} also implies that the disjoint union of quadratic relations  $\Re = \Re_1 \cup \Re_2$ is the reduced Gr\"{o}bner basis of $K$ in $\C\asY$.
It follows from \cite[Proposition 9.3(3)]{Mo94} that the intersection
\[G = \Re\cap \C N(\mathfrak{J}_\g)= \Re\cap \C N(\cR_\g) \]
is the reduced Gr\"{o}bner basis of the ideal $\mathfrak{K} = \ker (v_{n,d})$.
Moreover, we have $N(\mathfrak{J}_\g)= \C\cT(Y_N)$.  Then the obvious equalities
\[G= \Re\cap \C N(\mathfrak{J}_\g)  =  (\Re_1 \cup \Re_2)\cap \C\cT(Y_N) = \Re_2 = \cR_\q^{\rV}  \]
imply that $\cR_\q^{\rV}$
is the reduced Gr\"{o}bner basis of $\mathfrak{K}$.
\end{proof}
We remark that \cite[Proposition 9.3(4)]{Mo94} implies that the
set $\cR_\g \cup G = \Re_1^{\prime}\cup \Re_2$ is the reduced
 Gr\"{o}bner basis of the
ideal $K$. This fact agrees with Part (3) of our
Theorem~\ref{thm:preVeronese_ker}, proven independently.

\begin{cor}
The set of leading monomials for the Gr\"obner basis
$\cR^{\rV}_{\q}$ does not depend on the
deformation matrix $\q$ and equals
\[
\LM(\cR^{\rV}_{\q}) = \lbrace y_iy_j \, \vert \, (i,j)\in \Mv(n,d) \rbrace.
\]
\end{cor}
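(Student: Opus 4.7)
The plan is to read this directly off the defining form of the Gröbner basis $\cR^{\rV}_{\q}$ given in Theorem \ref{thm:Veronese_ker}. By construction, each element of $\cR^{\rV}_{\q}$ is a binomial of the shape $y_iy_j - \varphi_{ij}\, y_{i'}y_{j'}$ with $(i,j)\in\Mv(n,d)$ and $(i',j')\in\mathrm{C}(n,2,d)$, and the inequality $y_iy_j \succ y_{i'}y_{j'}$ is part of the hypothesis. Hence the leading monomial of that binomial, with respect to $\prec$, is $y_iy_j$, and the index $(i,j)$ ranges exactly over $\Mv(n,d)$.

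First I would observe that the map $(i,j)\mapsto y_iy_j-\varphi_{ij} y_{i'}y_{j'}$ provides, for each $(i,j)\in\Mv(n,d)$, precisely one element of $\cR^{\rV}_{\q}$, so
\[
\LM(\cR^{\rV}_{\q}) = \{\,y_iy_j \mid (i,j)\in\Mv(n,d)\,\}
\]
as a set. Second, I would point out that the set $\Mv(n,d)$ is defined purely combinatorially in Notation~\ref{notaz: C(n,d)} in terms of the multi-degree indices of the ordered monomials $w_0,\dots,w_N$ in $(\cT^n)_d$, i.e.\ in terms of $n$ and $d$ alone, without any reference to $\q$. Therefore the right-hand side is independent of the deformation matrix.

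There is essentially no obstacle here: the statement is a direct consequence of the explicit description of $\cR^{\rV}_{\q}$. The only sanity check to flag is that the coefficients $\varphi_{ij}$ depend on $\q$ (via the data $D((\cA^n_\q)^{(d)})$ in Notation \ref{notaz: data}), so the basis $\cR^{\rV}_{\q}$ itself genuinely depends on $\q$, yet the set of its leading monomials does not; the $\q$-dependence is entirely absorbed into the trailing term of each binomial.
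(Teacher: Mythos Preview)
Your argument is correct and matches the paper's approach: the corollary is stated there without proof, as an immediate consequence of the explicit description of $\cR^{\rV}_{\q}$ in Theorem~\ref{thm:Veronese_ker}. Your reading is exactly the intended one---each binomial has leading monomial $y_iy_j$ with $(i,j)\in\Mv(n,d)$ by the built-in inequality $y_iy_j\succ y_{i'}y_{j'}$, and $\Mv(n,d)$ is defined purely in terms of $n$ and $d$.
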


\section{Segre products and Segre maps}
\label{Sec:Segremap}
In this section we introduce and investigate
non-commutative analogues of the Segre embedding
$S_{n,m}: \mathbb{P}^n \times \mathbb{P}^m \to \mathbb{P}^{(n+1)(m+1)-1}.$
The main result of the section is Theorem \ref{thm:Segre_ker}, which describes explicitly the reduced Gr\"obner basis for the kernel of the non-commutative
Segre map.
We first recall the notion of Segre product of graded algebras, following
\cite[Section 3.2]{PoPo}.
\begin{dfn}
    Let \[
    R=\bigoplus_{k\in\N_0}R_k\mbox{ and } S=\bigoplus_{k\in\N_0}S_k\]
    be graded algebras. The \emph{Segre product} of $R$ and $S$ is the graded algebra
    \[R\circ S:=\bigoplus_{k\in\N_0}R_k\otimes S_k.\]
\end{dfn}

Clearly, the Segre product $R\circ S$ is a subalgebra of the tensor product algebra $R\otimes S$. Note that the embedding is not a graded algebra
morphism, as it doubles grading. The Hilbert function of $R\circ S$
    satisfies
    \[h_{R\circ S}(t)=\dim(R\circ S)_t=\dim(R_t\otimes S_t)=\dim(R_t)\cdot\dim(S_t)=h_R(t)\cdot h_S(t).\]

Given $n, m \in \N$, let
\[N := (n+1)(m+1)-1.\]
Let $\q$ and $\q^{\prime}$ be two \mas matrices of sizes $(n+1)\times (n+1)$ and $(m+1)\times (m+1)$, respectively, and
let $\cA^n_\q$ and $\cA^m_{\q^{\prime}}$ be the corresponding quantum spaces.
We shall construct a quantum space
$\cA^N_\g$
defined via
$N+1$ (double indexed) generators
\[Z_{nm}= \{z_{i\alpha}\mid i\in\{0,\dots,n\}, \alpha\in\{0,\dots,m\}\}\]
and an $(N+1)\times (N+1)$ \mas matrix $\g$ uniquely determined  by
$\q$ and $\q^{\prime}$.

\begin{convention}
\label{convention:orderingZ}
 We order the set $Z_{nm}$ using the lexicographic ordering on
the pairs of indices $(i, \alpha), \; 0 \leq i \leq n, \; 0 \leq
\alpha \leq m$, that is, $z_{i\alpha} \prec z_{j\beta}$ if and
only if either (a) $i < j$, or (b) $i = j,$ and $\alpha < \beta$.
Thus
\begin{equation}
  \label{eq:orderingZ}
Z_{nm} = \{z_{00} \prec z_{01} \prec \cdots \prec z_{0m} \prec
z_{10}\prec \cdots \prec z_{nm-1 } \prec z_{nm} \}.
\end{equation}
When no confusion arises, we write $Z$ for $Z_{nm}$. As usual, we
consider the free associative algebra $\C\asZ $ and fix the
degree-lexicographic ordering $\prec$  induced by
(\ref{eq:orderingZ}) on the free monoid $\asZ$.

In this section, we shall work simultaneously
with three disjoint sets of variables, $X= X_n$, $Y =Y_m$, and $Z
= Z_{nm}$. We shall use notation $\cT (X)= \cT^n$, $\cT(Y) =\cT^m$
and $\cT(Z)$ for the corresponding sets of ordered terms in
variables $X$, respectively $Y$, respectively $Z$. In particular,
the set $\cT(Z)$ of ordered monomials in $Z$ with respect to the
ordering (\ref{eq:orderingZ}) is
\[
\cT(Z) = \{z_{00}^{k_{00}}z_{01}^{k_{01}}\dots z_{10}^{k_{10}}\dots   z_{nm}^{k_{nm}}\mid k_{i\alpha} \in \N_0,
\; i\in\{0,\dots,n\},\; \alpha\in\{0,\dots,m\}\}.
\]
\end{convention}
As in Convention \ref{rmk:conventionpreliminary}, we identify $\cA^n_{\q}$ with $(\C\cT(X),\bullet)$ and
$\cA^m_{\q^{\prime}}$ with $(\C\cT(Y),\bullet)$.

\begin{rmk}
\label{rmk:tensorproduct} Consider the free associative algebra
  $\C\asXY= \C\langle x_0, \dotsc, x_n, y_0,\dotsc y_m\rangle$, generated by the disjoint union $X_n \sqcup Y_m$,
and the free monoid $\asXY = \langle x_0, \dotsc, x_n, y_0,\dotsc y_m\rangle$ with the canonical degree-lexicographic ordering $\prec$ extending
$x_0\prec x_1 \prec\dots x_n\prec y_0 \prec y_1 \prec \dots y_m$. Let \[\cR_{0}=\cR(\cA^n_\q \otimes \cA^m_{\q^{\prime}}) = \cR_{\q}\cup
\cR_{\q^{\prime}}\cup
\{y_{\alpha}x_i - x_iy_{\alpha}\mid i\in\{0,\dots,n\},\ \alpha\in\{0,\dots,m\}\}.\]
Then $\cR_{0}$ is the reduced Gr\"{o}bner basis of the two-sided ideal $(\cR_{0})$ of $\C\asXY$ and there is an isomorphism of algebras
\[\C\asXY/(\cR_0) \cong \cA^n_\q \otimes \cA^m_{\q^{\prime}}.\]
\end{rmk}

\begin{pro}
\label{pro:Segre_well-defined}
In notation as above, let $\cA^n_\q$, and $\cA^m_{\q^{\prime}}$ be quantum spaces and let
$N := (n+1)(m+1)-1$.
Then there exists a unique $(N+1)\times (N+1)$ \mas matrix
 $\g =\|g_{i\alpha, j\beta}\|$
such that
the assignment \[z_{i\alpha} \mapsto x_i \otimes y_{\alpha}, \quad  \mbox{for every}\  i\in\{0,\dots,n\}\mbox{ and every }\alpha\in\{0,\dots,m\},\] extends to a well-defined $\C$-algebra homomorphism
 \begin{equation}
\label{eq:map_Segre}
  s_{n,m} : \cA^N_\g \to  \cA^n_\q \otimes \cA^m_{\q^{\prime}}.
 \end{equation}
Moreover, the following conditions hold
 \begin{enumerate}
\item The quantum space $\cA^N_\g $ is presented as
\[
\cA^N_\g = \C\asZ / (\cR_{\g}),
\]
where
\begin{equation}
\label{eq:GB_Segre1}
 \cR_{\g} := \{z_{j\beta}z_{i\alpha}  - (g_{j\beta, i\alpha})z_{i\alpha}z_{j\beta}\mid z_{j\beta} \succ z_{i\alpha}, z_{j\beta}, z_{i\alpha}\in Z\}
 \end{equation}
is a reduced Gr\"{o}bner basis for the two-sided ideal $(\cR_{\g})$ in $\C\asZ$.
\item
There is an isomorphism of algebras $\cA^N_\g\cong (\C \cT(Z),  \bullet )$, where the multiplication $\bullet$ is defined as   $u \bullet v := \Nor_{\cR_{\g}}(uv)$.
\item
The image $s_{n,m} (\cA^N_{\g})$ is the Segre subalgebra $\cA^n_{\q} \circ \cA^m_{\q^{\prime}}$ of $\cA^n_{\q} \otimes \cA^m_{\q^{\prime}}$.
 \end{enumerate}
\end{pro}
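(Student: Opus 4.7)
The plan is to construct $\g$ by directly computing the skew-commutation relations that the images $x_i \otimes y_\alpha$ satisfy in the tensor product $\cA^n_\q \otimes \cA^m_{\q^{\prime}}$. Using the relations in each quantum space factor together with $y_\gamma x_k = x_k y_\gamma$ from Remark~\ref{rmk:tensorproduct}, for every pair $z_{j\beta} \succ z_{i\alpha}$ I compute the product
\[
(x_j \otimes y_\beta)(x_i \otimes y_\alpha) = x_j x_i \otimes y_\beta y_\alpha
\]
and rewrite it as a scalar multiple of $(x_i \otimes y_\alpha)(x_j \otimes y_\beta) = x_i x_j \otimes y_\alpha y_\beta$. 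Case analysis on whether $j > i$ or $j = i$ (with $\beta > \alpha$) gives
\[
g_{j\beta,\, i\alpha} = \begin{cases} q_{ji}\, q^{\prime}_{\beta\alpha} & \text{if } j > i, \\ q^{\prime}_{\beta\alpha} & \text{if } j = i \text{ and } \beta > \alpha, \end{cases}
\]
extended by $g_{i\alpha, j\beta} := (g_{j\beta, i\alpha})^{-1}$ and $g_{i\alpha, i\alpha} := 1$. The \mas property of $\g$ is then inherited from that of $\q$ and $\q^{\prime}$.

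With $\g$ defined, parts (1) and (2) of the proposition follow immediately from the general theory of quantum spaces applied to the generator set $Z_{nm}$: Remark~\ref{rmk:Grbasis} yields that $\cR_\g$ is a reduced Gr\"obner basis of $(\cR_\g)$ in $\C\asZ$, and Convention~\ref{rmk:conventionpreliminary} identifies $\cA^N_\g \cong (\C\cT(Z), \bullet)$. To construct $s_{n,m}$, I note that the free-algebra map $\tilde s : \C\asZ \to \cA^n_\q \otimes \cA^m_{\q^{\prime}}$ sending $z_{i\alpha} \mapsto x_i \otimes y_\alpha$ annihilates every element of $\cR_\g$ by the very construction of $\g$, and therefore descends to the required homomorphism $s_{n,m} : \cA^N_\g \to \cA^n_\q \otimes \cA^m_{\q^{\prime}}$.

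For uniqueness, any \mas matrix $\g^{\prime}$ admitting the extension must satisfy $(x_j\otimes y_\beta)(x_i\otimes y_\alpha) = g^{\prime}_{j\beta, i\alpha} (x_i\otimes y_\alpha)(x_j\otimes y_\beta)$ in $\cA^n_\q \otimes \cA^m_{\q^{\prime}}$; by Facts~\ref{cor:skewpol} the set $\{T \otimes T^{\prime} : T\in\cT(X), T^{\prime}\in\cT(Y)\}$ is a $\C$-basis of the tensor product, so $(x_i\otimes y_\alpha)(x_j\otimes y_\beta)$ equals a nonzero scalar times such a basis element, forcing $g^{\prime}_{j\beta, i\alpha} = g_{j\beta, i\alpha}$. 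Part (3) follows by noting that $s_{n,m}(\cA^N_\g)$ is the subalgebra of $\cA^n_\q \otimes \cA^m_{\q^{\prime}}$ generated by $\{x_i \otimes y_\alpha\}$, and its degree-$k$ component is spanned by elements of the form $x_{i_1}\cdots x_{i_k} \otimes y_{\alpha_1}\cdots y_{\alpha_k}$; since $\cT(X)_k$ and $\cT(Y)_k$ are $\C$-bases of $(\cA^n_\q)_k$ and $(\cA^m_{\q^{\prime}})_k$ respectively, these elements exhaust $(\cA^n_\q)_k \otimes (\cA^m_{\q^{\prime}})_k$, which is the degree-$k$ part of the Segre product $\cA^n_\q \circ \cA^m_{\q^{\prime}}$.

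The main obstacle is ensuring that the piecewise definition of $\g$ is coherent with the \mas constraint across all cases, particularly where the lexicographic ordering on $Z_{nm}$ interacts with the component orderings on $X_n$ and $Y_m$ (the boundary case $\beta=\alpha$ when $j>i$, where $q^{\prime}_{\beta\alpha}$ must be interpreted as $1$, deserves special attention). Once the case analysis is executed correctly, each of the remaining verifications---the Gr\"obner basis claim, well-definedness of $s_{n,m}$, uniqueness of $\g$, and identification of the image---is a routine consequence of the machinery already developed for a single quantum space.
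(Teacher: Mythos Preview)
Your proposal is correct and follows essentially the same approach as the paper: compute the commutation relations satisfied by the images $x_i\otimes y_\alpha$ to determine $\g$, then invoke the general quantum-space machinery (Remark~\ref{rmk:Grbasis}, Convention~\ref{rmk:conventionpreliminary}) for parts (1)--(2) and the obvious generating-set argument for part (3). The only cosmetic difference is that the paper avoids your case analysis by writing the single formula $g_{i\alpha,j\beta}=q_{ij}q^{\prime}_{\alpha\beta}$ (valid for all indices since $q_{ii}=q^{\prime}_{\alpha\alpha}=1$), which is precisely the Kronecker product $\g=\q\otimes\q^{\prime}$; this dissolves the ``boundary case'' you flag as an obstacle.
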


We call the homomorphism $s_{n,m} $ the \emph{$(n,m)$-Segre map}.
\begin{proof}
Assume that there exists an $(N+1)\times (N+1)$ \mas matrix $\g$ such that
$s_{n,m}$ is a homomorphism of $\C$-algebras.
 Let $Z=Z_{nm}$ as above be the set of generators of $\cA^N_{\g}$. We compute $s_{n,m}(z_{i\alpha} z_{j\beta})$ in two different ways:
\begin{align*}
 s_{n,m}(z_{i\alpha} z_{j\beta})& = s_{n,m}(z_{i\alpha})s_{n,m}(z_{j\beta}) \\
                                 &=(x_i\otimes y_{\alpha}) (x_j \otimes y_{\beta})= (x_ix_j) \otimes (y_{\alpha}y_{\beta}) \\
s_{n,m}(z_{i\alpha} z_{j\beta}) & = s_{n,m} (g_{i\alpha, j\beta} (z_{j\beta} z_{i\alpha})) = g_{i\alpha, j\beta} s_{n,m} ( z_{j\beta} z_{i\alpha})\\
                                &=  g_{i\alpha, j\beta} s_{n,m} ( z_{j\beta}) s_{n,m} (z_{i\alpha})  =g_{i\alpha, j\beta} (x_jx_i\otimes y_{\beta}y_{\alpha})\\
                                                                & = g_{i\alpha, j\beta} q_{ji} q^{\prime}_{\beta \alpha} (x_ix_j) \otimes (y_{\alpha}y_{\beta}).
 \end{align*}
Therefore,
\[ (x_ix_j) \otimes (y_{\alpha}y_{\beta}) =(g_{i\alpha, j\beta} q_{ji} q^{\prime}_{\beta \alpha}) (x_ix_j) \otimes (y_{\alpha}y_{\beta})\]
for every $i,j\in\{0,\dots,n\}$ and every $\alpha,\beta\in\{0,\dots,m\}$.
 It follows that $\g = \left\| g_{i\alpha, j \beta}\right\|$ is a \mas matrix uniquely determined by the equalities
\begin{equation}
\label{eq:matrix_Segre2}
 g_{i\alpha, j \beta} = (q_{ji} q^{\prime}_{\beta \alpha})^{-1} = q_{ij} q^{\prime}_{\alpha \beta},
 \end{equation}
We remark that the matrix $\g$ is equal to the the Kronecker product $\q \otimes \q^{\prime}$ of the matrices $\q$ and $\q^{\prime}$.

Conversely, if $\g$ is the \mas matrix defined via (\ref{eq:matrix_Segre2}), then the Segre map (\ref{eq:map_Segre}) is a well-defined
algebra
homomorphism.
Conditions (1) and (2) follow straightforwardly from the discussion in Section 3, see Remarks \ref{rmk:Grbasis} and Convention \ref{rmk:conventionpreliminary}.
The Segre subalgebra $\cA^n_{\q} \circ\cA^m_{\q^{\prime}}$ is generated by the elements $x_i \otimes y_{\alpha}$ for $i\in\{0,\dots,n\}$ and $\alpha\in\{0,\dots,m\}$. By construction
$s_{n,m}(z_{i\alpha}) = x_i \otimes y_{\alpha}$, hence the image $s_{n,m} (\cA^N_{\g})$ is the Segre subalgebra $\cA^n_{\q} \circ \cA^m_{\q^{\prime}}$, which proves (3).
\end{proof}

As usual, we identify the quantum space $\cA^N_\g$ with $(\C \cT(Z),  \bullet )$, see Convention \ref{rmk:conventionpreliminary}.
\begin{rmk}
\label{rmk:segre_Product}
Being a Segre product, the algebra $\cA^n_{\q} \circ \cA^m_{\q^{\prime}} =s_{n,m}(\cA^N_{\g})$
 inherits various properties from the two algebras $\cA^n_{\q}$ and $\cA^m_{\q^{\prime}}$. In particular, since the latter are one-generated, quadratic, and Koszul, it follows
 from \cite[Proposition 3.2.1]{PoPo} that the algebra
$\cA^n_{\q} \circ \cA^m_{\q^{\prime}}$ is also one-generated, quadratic, and Koszul.
Clearly, the set
$\{x_i\otimes y_{\alpha}\mid i\in\{0,\dots,n\}, \alpha\in\{0,\dots,m\}\}$ of cardinality $N+1 = (n+1)(m+1)$ is a generating set of $\cA^n_{\q} \circ \cA^m_{\q^{\prime}}$.
\end{rmk}

\begin{lem}
\label{lem:product_segre}
The following equalities hold in the Segre product
$\cA^n_{\q} \circ \cA^m_{\q^{\prime}}$, for all $i,j,\alpha,\beta$, such that $0 \leq i <j \leq n$ and $0 \leq \alpha < \beta \leq m$:
\begin{equation}
  \label{eq:trivial_segre}
  (x_i\circ y_{\alpha})(x_j\circ y_{\beta})  =(x_ix_j) \circ (y_{\alpha}y_{\beta}).
    \end{equation}
    \begin{equation}
  \label{eq:rel_segre1}
    \begin{array}{ll}
    (x_j\circ y_{\beta}) (x_i\circ y_{\alpha}) &=q_{ji}q^{\prime}_{\beta\alpha}(x_ix_j) \circ (y_{\alpha}y_{\beta})
                                                =q_{ji}q^{\prime}_{\beta\alpha} (x_i\circ y_{\alpha})(x_j\circ y_{\beta}). \\
    (x_j\circ y_{\alpha})(x_i\circ y_{\beta}) &=q_{ji}q^{\prime}_{\alpha\beta}(x_ix_j)\circ (y_{\beta}y_{\alpha})=q_{ji}q^{\prime}_{\alpha\beta} (x_i\circ y_{\beta}) (x_j\circ y_{\alpha}). \end{array}
            \end{equation}
            \begin{equation}
\label{eq:kernel_segre}
    (x_i\circ y_{\beta})(x_j\circ y_{\alpha}) =x_ix_j\circ y_{\beta}y_{\alpha}
                                              =q^{\prime}_{\beta\alpha}  (x_ix_j) \circ (y_{\alpha}y_{\beta}) =q^{\prime}_{\beta\alpha}(x_i\circ y_{\alpha})(x_j\circ y_{\beta})
            \end{equation}
 \begin{equation}
  \label{eq:rel_segre3}
  \begin{array}{ll}
    (x_j\circ y_{\alpha})(x_i\circ y_{\alpha}) &=q_{ji}  (x_ix_j) \circ (y_{\alpha}y_{\alpha})
                                               =q_{ji}  (x_i\circ y_{\alpha})(x_j\circ y_{\alpha})\\
    (x_i\circ y_{\beta}) (x_i\circ y_{\alpha}) &=q^{\prime}_{\beta\alpha}(x_ix_i) \circ (y_{\alpha}y_{\beta})
                                               =q^{\prime}_{\beta\alpha} (x_i\circ y_{\alpha})(x_i\circ y_{\beta})\\
       \end{array}
\end{equation}

                                                                                        \end{lem}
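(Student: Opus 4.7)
The plan is to verify each of the identities (\ref{eq:trivial_segre})--(\ref{eq:rel_segre3}) by a direct computation inside the ambient tensor-product algebra $\cA^n_{\q} \otimes \cA^m_{\q^{\prime}}$, using the fact that $\cA^n_{\q} \circ \cA^m_{\q^{\prime}}$ is a subalgebra of $\cA^n_{\q} \otimes \cA^m_{\q^{\prime}}$ (see the definition of the Segre product and Remark \ref{rmk:segre_Product}). Under this embedding every generator $x_i \circ y_{\alpha}$ is identified with $x_i \otimes y_{\alpha}$, so the Segre multiplication is the restriction of the tensor-product multiplication. In particular the $x$'s commute with the $y$'s, i.e.\ $(x_k \otimes y_{\gamma})(x_l \otimes y_{\delta}) = (x_k x_l) \otimes (y_{\gamma} y_{\delta})$, which, rewritten in the Segre notation, says $(x_k \circ y_{\gamma})(x_l \circ y_{\delta}) = (x_k x_l) \circ (y_{\gamma} y_{\delta})$ for every choice of indices.

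From this basic equality the proof reduces to applying the defining quadratic relations of $\cA^n_{\q}$ and $\cA^m_{\q^{\prime}}$ separately in each tensor factor, namely $x_j x_i = q_{ji}\, x_i x_j$ for $0 \leq i < j \leq n$ and $y_{\beta} y_{\alpha} = q^{\prime}_{\beta \alpha}\, y_{\alpha} y_{\beta}$ for $0 \leq \alpha < \beta \leq m$. Identity (\ref{eq:trivial_segre}) is the case where no commutation is needed, since $x_i x_j$ and $y_{\alpha} y_{\beta}$ are already ordered. Identity (\ref{eq:rel_segre1}) is obtained by using both commutations at once: in the first line I rewrite $x_j x_i \circ y_{\beta} y_{\alpha}$ by pulling out $q_{ji} q^{\prime}_{\beta \alpha}$, and in the second line I do the same starting from $x_j x_i \circ y_{\alpha} y_{\beta}$. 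Identity (\ref{eq:kernel_segre}) uses only the $y$-relation (the $x$-part is already ordered), and identity (\ref{eq:rel_segre3}) uses only the $x$-relation in its first line and only the $y$-relation in its second line, since one of the two factors is already in normal form.

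After each computation I convert the result back into a product of two degree-one Segre elements by the reverse of the initial identity, which yields the right-most expressions in the displayed equations. The only bookkeeping to watch is that $(x_i x_j) \circ (y_{\beta} y_{\alpha})$ and $(x_i x_j) \circ (y_{\alpha} y_{\beta})$ differ by the factor $q^{\prime}_{\beta \alpha}$, so I will consistently put all the $y$'s in increasing order before reading off the final coefficients.

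The argument contains no real obstacle; it is a straightforward bilinear calculation in $\cA^n_{\q} \otimes \cA^m_{\q^{\prime}}$ combined with the binomial quadratic relations in each factor. The only subtle point, which I will state explicitly at the beginning, is the identification $(x_i \circ y_{\alpha})(x_j \circ y_{\beta}) = (x_i x_j) \circ (y_{\alpha} y_{\beta})$, valid for \emph{all} $i, j, \alpha, \beta$; once this is isolated the lemma becomes a one-line verification per line.
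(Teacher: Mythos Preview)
Your proposal is correct and is precisely the argument the paper has in mind: the paper states this lemma without proof, treating the identities as immediate consequences of the tensor-product multiplication $(x_k\otimes y_{\gamma})(x_l\otimes y_{\delta})=(x_kx_l)\otimes(y_{\gamma}y_{\delta})$ together with the defining relations $x_jx_i=q_{ji}x_ix_j$ and $y_{\beta}y_{\alpha}=q^{\prime}_{\beta\alpha}y_{\alpha}y_{\beta}$. Your write-up simply makes this routine verification explicit.
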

\begin{rmk}
\label{rmk:rel_segre}
\begin{enumerate}
\item
The equalities given in Lemma \ref{lem:product_segre} imply the following explicit list
 of defining relations $ \cR_{\g} $ for the quantum space $\cA^N_{\g}$:
\begin{equation}
  \label{eq:rel_segre4}
    \begin{array}{lll}
    z_{j\beta}z_{i\alpha}  - q_{ji}q^{\prime}_{\beta\alpha} z_{i\alpha}z_{j\beta}&\in \cR_{\g} & \text{by (\ref{eq:rel_segre1})}\\
    z_{j\alpha}z_{i\beta}  - q_{ji}q^{\prime}_{\alpha\beta} z_{i\beta}z_{j\alpha}&\in \cR_{\g} & \text{by (\ref{eq:rel_segre1})}\\
    z_{j\alpha}z_{i\alpha} - q_{ji}z_{i\alpha}z_{j\alpha}                        &\in \cR_{\g} & \text{by (\ref{eq:rel_segre3})} \\
    z_{i\beta}z_{i\alpha}  -q^{\prime}_{\beta\alpha} z_{i\alpha}z_{i\beta}       &\in \cR_{\g} & \text{by (\ref{eq:rel_segre3})}
    \end{array},
    \end{equation}
    for every $0 \leq i <j \leq n$ and every $0 \leq \alpha <\beta \leq m$.
    \item
    The equalities (\ref{eq:kernel_segre}) imply that the following quadratic binomials in $\cA^N_{\g}$ are in the kernel  of the Segre map:
    \begin{equation}
  \label{eq:ker_segre}
    z_{i\beta}z_{j\alpha}  - q^{\prime}_{\beta\alpha} z_{i\alpha}z_{j\beta} \in \ker s(n,m),
     \end{equation}
                                                                                            for every $0 \leq i <j \leq n$ and every $0 \leq \alpha <\beta \leq m$.
                                                                                            \end{enumerate}
\end{rmk}
\begin{notaz}
We denote by $\Ms(n,m)$ the following collection of quadruples:
\begin{equation}
\label{eq:Ms(n,m)}
 \Ms(n,m)= \{ (i,j,\beta, \alpha) \mid 0 \leq i<j \leq n,  0 \leq \alpha < \beta \leq m\}.
 \end{equation}
\end{notaz}

\begin{lem}
\label{lem:cardinality_Ms}
The cardinality of $\Ms(n,m)$ is
\begin{equation}
\label{eq:cardinalityMs(n,m)}
\left|\Ms(n,m)\right| = \binom{n+1}{2}\binom{m+1}{2}.
\end{equation}
\end{lem}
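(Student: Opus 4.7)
The plan is to observe that $\Ms(n,m)$ is, by its very definition in \eqref{eq:Ms(n,m)}, a Cartesian product of two independent choices: the pair $(i,j)$ with $0\le i<j\le n$ is chosen independently of the pair $(\beta,\alpha)$ with $0\le \alpha<\beta\le m$. Formally, I would exhibit the bijection
\[
\Ms(n,m)\;\longleftrightarrow\;\bigl\{(i,j):0\le i<j\le n\bigr\}\times\bigl\{(\beta,\alpha):0\le \alpha<\beta\le m\bigr\},\qquad (i,j,\beta,\alpha)\mapsto \bigl((i,j),(\beta,\alpha)\bigr).
\]
This map is manifestly well-defined and invertible, so $|\Ms(n,m)|$ equals the product of the two factor cardinalities.

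Next I would compute each factor. The number of strictly increasing pairs $(i,j)$ with $i,j\in\{0,1,\dots,n\}$ is the number of $2$-element subsets of an $(n+1)$-element set, hence $\binom{n+1}{2}$. Identically, the number of pairs $(\beta,\alpha)$ with $0\le \alpha<\beta\le m$ is $\binom{m+1}{2}$ (the ordering $\alpha<\beta$ versus $\beta<\alpha$ is immaterial for counting, since specifying the unordered $2$-subset determines both entries). Multiplying the two counts yields \eqref{eq:cardinalityMs(n,m)}.

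There is no real obstacle here; the statement is a direct combinatorial identity once one recognises that the defining inequalities decouple into two independent constraints on disjoint pairs of indices. The only thing worth being careful about is matching the $(n+1)$ and $(m+1)$ in the binomial coefficients to the correct index ranges $\{0,\dots,n\}$ and $\{0,\dots,m\}$, which contain $n+1$ and $m+1$ elements respectively.
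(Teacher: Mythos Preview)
Your proof is correct and matches the paper's own argument: the paper fixes the pair $(i,j)$ with $0\le i<j\le n$ (of which there are $\binom{n+1}{2}$) and counts $\binom{m+1}{2}$ choices for $(\beta,\alpha)$ with $0\le\alpha<\beta\le m$, which is exactly your Cartesian-product count phrased slightly differently.
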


\begin{proof}
Clearly, $|\{(i,j) \mid 0 \leq i<j \leq n\}| = \binom{n+1}{2}$. Moreover, for each fixed pair $(i,j)$,  $0 \leq i<j \leq n,$ the number of quadruples  $\{(i,j,\beta, \alpha)\mid 0 \leq \alpha < \beta \leq m \}$ is exactly $\binom{m+1}{2}$, which finishes the proof.
\end{proof}

 We keep the notation and conventions of this section, in particular we identify the quantum space $\cA^N_\g$ with $(\C \cT(Z),  \bullet )$.
 Recall that if $P \subset
\cA^N_\g$ is an arbitrary set, then $\LM(P)=\LM_{\prec_0}(P)$
denotes the set of leading monomials
\[\LM(P)=\{\LM_{\prec_0}(f) \mid f \in P\}.\]
 A monomial $T\in \cT (Z)$ \emph{is normal modulo} $P$ if it does not contain as a subword  any $u \in \LM(P)$.
 The set of all normal mod $P$ monomials in $\cT(Z)$ is denoted by $N_{\prec_0}(P)$, so
\[N_{\prec_0}(P)= \{T \in \cT(Z) \mid   T   \text{ is normal mod P} \}.\]
A criterion for a Gr\"{o}bner basis $F$ of an ideal  $\mathfrak{K}=(F)$ in $\cA^N_\g$ follows straightforwardly
as an analogue of Lemma \ref{lem:quadr_Grbasis}, in which we
 only replace $Y_N$ with the set of generators $Z$, and keep the remaining notation and assumptions.

\begin{thm}
\label{thm:Segre_ker}
The set
    \[\cR_{\q,\q^{\prime}}^{\rS}:=\{z_{i\beta}z_{j\alpha}-\q^{\prime}_{\beta \alpha} z_{i\alpha}z_{j\beta} \mid 0 \leq i < j \leq n,   0 < \alpha < \beta \leq m\}\subset \cA^N_{\g}\]
    consisting of $\binom{n+1}{2}\binom{m+1}{2}$ quadratic binomials
    is a reduced Gr\"{o}bner basis for the kernel of the Segre map
    \[s_{n,m} : \cA^N_{\g} \to  \cA^n_{\q} \otimes \cA^m_{\q^{\prime}}.\]
    \end{thm}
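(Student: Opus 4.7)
The approach mirrors the first proof of Theorem \ref{thm:Veronese_ker}. Set $\mathfrak{K} := \ker(s_{n,m})$ and $B := \cA^N_\g / \mathfrak{K}$. By Proposition \ref{pro:Segre_well-defined}(3) and Remark \ref{rmk:segre_Product}, $B \cong \cA^n_\q \circ \cA^m_{\q^{\prime}}$ is a one-generated quadratic (Koszul) algebra, so in particular $\mathfrak{K}$ is generated by its degree-two component. The proof then has four stages: (i) check the containment $\cR_{\q,\q^{\prime}}^{\rS} \subset \mathfrak{K}$; (ii) show that $\cR_{\q,\q^{\prime}}^{\rS}$ generates $\mathfrak{K}$ by a dimension count in degree two; (iii) upgrade this to the Gr\"obner basis property via the degree-three criterion of Lemma \ref{lem:quant_diamondquadratic}; (iv) verify reducedness.

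Stage (i) is immediate from equation (\ref{eq:ker_segre}), which was derived in Lemma \ref{lem:product_segre}. For stage (ii), the Hilbert series of a Segre product is the product of those of its factors, so $\dim B_2 = \binom{n+2}{2}\binom{m+2}{2}$. Since $\dim (\cA^N_\g)_2 = \binom{N+2}{2}$ with $N+1 = (n+1)(m+1)$, a short binomial manipulation yields
\[
\dim \mathfrak{K}_2 \;=\; \binom{N+2}{2} - \binom{n+2}{2}\binom{m+2}{2} \;=\; \binom{n+1}{2}\binom{m+1}{2} \;=\; |\cR_{\q,\q^{\prime}}^{\rS}|,
\]
where the cardinality on the right was computed in Lemma \ref{lem:cardinality_Ms}. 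The leading monomials $z_{i\beta}z_{j\alpha}$ (with $i<j$, $\alpha<\beta$) are pairwise distinct, hence $\cR_{\q,\q^{\prime}}^{\rS}$ is linearly independent and so spans $\mathfrak{K}_2$; quadraticity of $\mathfrak{K}$ then forces $\mathfrak{K} = (\cR_{\q,\q^{\prime}}^{\rS})$.

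Stage (iii) is the crux of the argument. One must identify $N_{\prec_0}(\cR_{\q,\q^{\prime}}^{\rS})_3 \subset \cT(Z)$ explicitly and compare its cardinality with $\dim B_3 = \binom{n+3}{3}\binom{m+3}{3}$. A short case analysis on the lex order on pair indices shows that a monomial $z_{i_1\alpha_1}z_{i_2\alpha_2}z_{i_3\alpha_3} \in \cT(Z)$ avoids every subword in $\LM(\cR_{\q,\q^{\prime}}^{\rS})$ if and only if both index sequences $(i_\ell)$ and $(\alpha_\ell)$ are (independently) weakly increasing; the count of such pairs of weak chains factors as $\binom{n+3}{3}\binom{m+3}{3}$, matching $\dim B_3$. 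Lemma \ref{lem:quant_diamondquadratic} then delivers the Gr\"obner basis property. For stage (iv), $\LM(\cR_{\q,\q^{\prime}}^{\rS})$ is an antichain, giving minimality, and in each relation the trailing monomial $z_{i\alpha}z_{j\beta}$ has $i\le j$ and $\alpha\le\beta$, so lies outside $\LM(\cR_{\q,\q^{\prime}}^{\rS}\setminus\{\cdot\})$; hence the basis is reduced.

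The main obstacle I expect is the combinatorial part of stage (iii): one needs to see why the two simultaneous constraints --- ordered-monomial membership in $\cT(Z)$ (lex in the pair index) and normality modulo the leading terms $\{z_{i\beta}z_{j\alpha}: i<j,\ \alpha<\beta\}$ --- conspire so that the two index sequences decouple into independent weakly increasing chains. This decoupling is precisely what makes the cardinality factor cleanly as $\binom{n+3}{3}\binom{m+3}{3}$ and coincide with $\dim B_3$; without it, one could not invoke Lemma \ref{lem:quant_diamondquadratic} and the proof would collapse.
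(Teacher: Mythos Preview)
Your proposal is correct and follows essentially the same route as the paper's proof: containment via equation (\ref{eq:ker_segre}), the degree-two dimension count to get $\mathfrak{K}=(\cR_{\q,\q^{\prime}}^{\rS})$, the identification of $N(\cR_{\q,\q^{\prime}}^{\rS})_3$ as the set of monomials with both index sequences weakly increasing, and the appeal to Lemma \ref{lem:quant_diamondquadratic}. The paper dispatches reducedness as ``obvious'' while you spell out the antichain and trailing-monomial checks, but otherwise the arguments coincide.
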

\begin{proof}
It is clear that $\left|\cR_{\q,\q^{\prime}}^{\rS}\right| = \left|\Ms(n,m)\right|=\binom{n+1}{2}\binom{m+1}{2}$.
We set
\[
\begin{array}{lc}
\mathfrak{K} = \ker s_{n,m},  &N(\mathfrak{K}) = N_{\prec_0}(\mathfrak{K}),\\
\cR =\cR_{\q,\q^{\prime}}^{\rS}, & N(\cR) = N_{\prec_0}(\cR).
\end{array}
\]
 By Remark \ref{rmk:rel_segre}(2),   $\cR  \subset  \mathfrak{K}.$
We claim that $\cR$ generates $\mathfrak{K}$ as a two-sided ideal of $\cA^N_\g$.

The image $s_{n,m}(\cA^N_{\g})$ is the Segre product $\cA^n_{\q} \circ \cA^m_{\q^{\prime}}$, which is a quadratic algebra, see
Remark
\ref{rmk:segre_Product}.
Therefore the kernel $\mathfrak{K}$ is generated by polynomials of degree two.
Moreover, there is an isomorphism of vector spaces \[\C N(\mathfrak{K}) \cong \cA^n_{\q} \circ \cA^m_{\q^{\prime}}.\]
In particular,
\begin{equation}
\label{eq:dim1}
\dim (\C N(\mathfrak{K}))_2 = \dim ((\cA^n_{\q})_2) \dim ((\cA^m_{\q^{\prime}})_2) = \binom{n+2}{2}\binom{m+2}{2}.
\end{equation}
 It is clear that
$(\cA^N_\g)_2 = (\C\cT(Z))_2 = (\mathfrak{K})_2 \oplus (\C N(\mathfrak{K}))_2$,
hence
\begin{align*}
\label{eq:dimensions}
\dim (\mathfrak{K})_2 &= \dim(\cA^N_\g)_2 - \dim (\C N(\mathfrak{K}))_2= \binom{N+2}{2} - \binom{n+2}{2}\binom{m+2}{2}\nonumber\\
&= \binom{(n+1)(m+1)+1}{2} - \binom{n+2}{2}\binom{m+2}{2}= \binom{n+1}{2}\binom{m+1}{2}=  \left| \cR \right|.
\end{align*}
Now the equality $|\cR| = \dim (\mathfrak{K})_2$, together with the obvious linear independence of the elements of $\cR$, imply that
$\cR$ is a $\C$-basis of $(\mathfrak{K})_2$, so it spans the space
$(\mathfrak{K})_2$.
But we know that the kernel $\mathfrak{K}$ is generated by polynomials of degree two, hence $\mathfrak{K}= (\cR)$.

Next we shall prove that $\cR$ is a Gr\"{o}bner basis of the ideal $\mathfrak{K}$.
Let  $B = \cA^N/\mathfrak{K}$. Then
\[B 
= \cA^N/\ker(s_{n,m}) \cong s_{n,m}(\cA^N) = \cA^n_{\q} \circ \cA^m_{\q^{\prime}}.\]
Hence
\begin{equation}
\label{eq:dimB3}
\dim B_3 = \dim (\cA^n_{\q} \circ \cA^m_{\q^{\prime}})_3 =  \dim (\cA^n_{\q})_3 {\cdot} \dim(\cA^m_{\q^{\prime}})_3 = \binom{n+3}{3}\binom{m+3}{3}.
\end{equation}

We claim that $\dim B_3 = |(N(\cR))_3|$.
Indeed, by the identification $\cA^N_\g \simeq (\C \cT(Z),  \bullet )$ we have
\[(\cA^N_\g)_3 = (\C\cT(Z))_3 =
\C\{z_{i\alpha}z_{j\beta}z_{k\gamma}\mid (i,\alpha) \leq (j,\beta)\leq (k,\gamma),   0\leq i,j,k \leq n, 0\leq \alpha,\beta,\gamma\leq m \}.\]
Clearly, a monomial $z_{i\alpha}z_{j\beta}z_{k\gamma}\in  (\cT(Z))_3$ is normal modulo $\cR$ if and only if each of its subwords of length 2,
$z_{i\alpha}z_{j\beta}$ and $z_{j\beta}z_{k\gamma}$, is normal modulo $\cR$. Moreover,
\[N(\cR)_2=
\{z_{i\alpha}z_{j\beta}\mid 0 \leq i \leq j\leq n, 0\leq \alpha\leq\beta\leq m \},\]
therefore
\begin{equation}
\label{eq:the_set_N3}
N(\cR)_3 = \{z_{i\alpha}z_{j\beta}z_{k\gamma}\mid 0 \leq i\leq j \leq k \leq n,  \quad 0 \leq \alpha \leq \beta\leq \gamma\leq m \}.
\end{equation}
It follows from (\ref{eq:the_set_N3})
that
\[
\left|N(\cR)_3\right| =  \binom{n+3}{3}\binom{m+3}{3},
\]
which together with (\ref{eq:dimB3}) give the desired equality $\dim B_3 = |(N(\cR))_3|$.
Now Lemma
\ref{lem:quant_diamondquadratic}  implies that $\cR$ is a Gr\"{o}bner basis of the ideal $\mathfrak{K}$.
It is obvious that $\cR$ is a reduced Gr\"{o}bner basis of $\mathfrak{K}$.
\end{proof}
\section{Examples}
\label{sec:examples}
We present here some example that illustrate the results of our paper.

\begin{ex}[The non-commutative twisted cubic curve]\label{ex:ver11mod}
Let $n=1$ and $d=3$. Then
\[
\begin{array}{l}
X= \{x_0, x_1\},\quad  \q= \begin{pmatrix}
1 & q^{-1} \\
q & 1
\end{pmatrix},\quad
\cA^1_\q = \C\langle x_0, x_1\rangle / (x_1x_0 -qx_0x_1).
 \end{array}
 \]
 In this case $N=\binom{1+3}{3}-1 =3$ and the corresponding quantum space $\cA^3_\g$ is defined by the following data
\[
\begin{array}{l}
 Y= \{y_0, y_1, y_2, y_3\}, \quad
 \q= \begin{pmatrix}
1 &q^{-3} &q^{-6}&q^{-9} \\
q^3 &1 &q^{-3}&q^{-6} \\
q^6 &q^3 &1&q^{-3} \\
q^9 &q^6 &q^3&1 \\
\end{pmatrix}.
 \end{array}
 \]
 The kernel $\ker(v_{1,3})$ of the  Veronese map $v_{1,3}: \cA^3_\g \rightarrow \cA^1_\q$
 has a reduced Gr\"{o}bner basis $G$ given below
     \[G= \{y_1^2-q^2y_0y_2, \, y_1y_2-qy_0y_3, \, y_2^2-q^2y_1y_3\}.\]
     We have used the fact that in this case $\Mv(1,3)=\{(1,1),(1,2),(2,2)\}$.

Setting $q=1$ we obtain that the defining ideal for the \emph{commutative} Veronese is generated by the three polynomials
     $\{y_1^2-y_0y_2,y_1y_2-y_0y_3, y_2^2-y_1y_3\}$.
This is exactly the set of generators described and discussed in \cite[pp. 23, 51]{Harris}.
\end{ex}

\begin{ex}[The non-commutative rational normal curves]\label{ex:ver1d}
Generalising the previous example, we consider $n=1$ and $d$ arbitrary. In notation as above, we write \[
\cA^1_\q = \C\langle x_0, x_1\rangle / (x_1x_0 -qx_0x_1).
 \]
In this case, $N= \binom{d+1}{d}-1=d$ and the corresponding quantum space $\mathcal{A}^d_{\g}$ is determined by the data
\begin{equation}
\label{eq:data_1d}
\begin{array}{l}
 Y= \{y_0, y_1, \dots, y_d\}, \quad
 \q= \begin{pmatrix}
1 &q^{-d} &q^{-2d}& \dots & \dots & \dots & q^{-d^2} \\
q^{d} &1 &q^{-d }& & &  & q^{-d(d-1)} \\
q^{2d} &q^{d} &1& \ddots && & q^{-d(d-2)} \\
\vdots &  & \ddots & \ddots & \ddots && \vdots  \\
\vdots & & & \ddots & \ddots & \ddots & \vdots &\\
q^{d(d-1)} & & & & \ddots & 1 & q^{-d} \\
q^{d^2} &q^{d(d-1)} &q^{d(d-2)} &\dots & \dots & q^d & 1 \\
\end{pmatrix}.
 \end{array}
\end{equation}
 Observe that whenever $q$ is a $d$-th root of unity, the derived $(1,d)$-quantum space is a commutative algebra.

 The kernel $\ker(v_{1,d})$ of the  Veronese map $v_{1,d}: \cA^d_\g \rightarrow \cA^1_\q$
 has a reduced Gr\"{o}bner basis $G$ given by $\binom{d}{2}$ quadratic relations:
     \[G= \lbrace y_i y_j-h_{ij} \ \vert \  1 \leq i \leq j \leq d-1 \rbrace, \quad
h_{ij} = \begin{cases} q^{i(d-j)} y_0 y_{i+1} & i+j \leq d \\
q^{i(d-j)}y_{i+j-d} y_d & i+j > d
\end{cases}.\]

Once again, for $q=1$ we obtain that a reduced Gr\"obner basis for the defining ideal of the \emph{commutative} rational normal curve (see \cite[Example 1.16]{Harris}).
\end{ex}

\begin{ex}[The non-commutative Veronese surface]\label{ex:ver22}
Let $n=d=2$, that is,
\begin{gather*}
X = \{x_0, x_1, x_2\},\quad  \q= \begin{pmatrix}
1 & q_{10}^{-1} & q_{20}^{-1} \\
q_{10} & 1 & q_{21}^{-1} \\
q_{20} & q_{21} & 1
\end{pmatrix},\\
\cA^2_\q = \C\langle x_0, x_1, x_2\rangle / (x_1x_0 -q_{10}x_0x_1, x_2x_0 -q_{20} x_0x_2, x_2x_1-q_{21} x_1x_2).
\end{gather*}
 In this case $N=5$ and the corresponding $(2,2)$-quantum space $\cA^5_\g$ is completely determined by the data
\[
\begin{array}{l}
 Y= \{y_0, y_1, y_2, y_3, y_4, y_5\}, \quad
 \g := \begin{pmatrix}
        1 & q_{10}^{-2} & q_{20}^{-2} & q_{10}^{-4} & q_{20}^{-2}q_{10}^{-2} & q_{20}^{-4}\\
        q_{10}^{2} & 1 & q_{20}^{-1}q_{21}^{-1}q_{10} & q_{10}^{-2} & (q_{10}q_{20}q_{21})^{-1} & q_{20}^{-2} q_{21}^{-2}  \\
        q_{20}^{2} & q_{20}q_{21}q_{10}^{-1} & 1 & q_{21}^2 q_{10}^{-2} & q_{21}q_{10}^{-1}q_{20}^{-1} & q_{20}^{-2}\\
        q_{10}^{4} & q_{10}^{2} & q_{21}^{-2}q_{10}^{2} & 1 & q_{21}^{-2} & q_{21}^{-4}\\
        q_{20}^{2}q_{10}^{2} & q_{10}q_{20}q_{21} & q_{21}^{-1}q_{10}q_{20} & q_{21}^{2} & 1 & q_{21}^{-2}\\
        q_{20}^{4} & q_{20}^{2}q_{21}^{2} & q_{20}^{2} & q_{21}^{4} &  q_{21}^{2} & 1
\end{pmatrix}
 \end{array}
 \]

Observe that inside the matrix $\g$ we find as submatrices three occurrences of the matrix in \eqref{eq:data_1d} for $d=2$ and $q$ equal to one of the three commutation parameters, namely
\[\begin{pmatrix}
        1 & q_{10}^{-2} & q_{10}^{-4} \\
        q_{10}^{2} & 1 & q_{10}^{-2} \\
        q_{10}^{4} & q_{10}^{2} &  1
\end{pmatrix}, \qquad  \begin{pmatrix}
        1 & q_{20}^{-2} & q_{20}^{-4} \\
        q_{20}^{2} & 1 & q_{20}^{-2} \\
        q_{20}^{4} & q_{20}^{2} &  1
\end{pmatrix}, \quad \mbox{and} \quad \begin{pmatrix}
        1 & q_{21}^{-2} & q_{21}^{-4} \\
        q_{21}^{2} & 1 & q_{21}^{-2} \\
        q_{21}^{4} & q_{21}^{2} &  1
\end{pmatrix}.
\]

The kernel of the Veronese map $v_{2,2} : \cA^5_\g \to  \cA^2_\q$ has a reduced Gr\"obner basis consisting of six quadratic polynomials
\begin{align*}
G = (
 & y_1^2 - q_{10} y_0y_3,  \, y_1y_2-q_{10}y_0y_4, \,  y_2^2 - q_{20}y_0y_5,\\
 & y_2y_3 - q_{21}^2 y_1y_4, \, y_2y_4 -q_{21}y_1y_5, \,  y_4^2 -q_{21}y_3y_5 ) .
\end{align*}
\end{ex}

\begin{ex}[The Segre quadric]
Let $n=m=1$. Following the above conventions, we write

$\cA^1_\q = \C\langle x_0, x_1\rangle / (x_1x_0 -qx_0x_1)$ and  $\cA^1_{\q^\prime} = \C\langle y_0, y_1\rangle / (y_1y_0 -q^\prime y_0y_1)$.

In this case, $N=3$ and the quantum space $\mathcal{A}^3_{\g}$ is determined by the data
\[\begin{array}{l}
 Z= \{z_{00}, z_{01}, z_{10}, z_{11} \}, \quad
 \g= \begin{pmatrix}
1 &{q^\prime}^{-1} & q^{-1} & (q^\prime q)^{-1}\\
q^{\prime} & 1 & q^{-1} q^{\prime} & q^{-1} \\
q & q(q^\prime)^{-1} & 1 & (q^\prime)^{-1} \\
qq^{\prime} & q & q^{\prime} & 1 \\
\end{pmatrix}.
 \end{array}
 \]
 The kernel $\ker (s_{1,1})$ of the Segre map $s_{1,1} : \cA^3_{\g} \to \cA^1_{\q} \otimes \cA^1_{\q^\prime}$ has a reduced Gr\"obner basis consisting of a single quadratic polynomial
\[ G= \lbrace z_{01}z_{10} - q^\prime z_{00}z_{11} \rbrace.\]
\end{ex}

\begin{ex}[The non-commutative Segre threefold]
Let $n=2$ and $m=1$. We consider \[
\cA^2_\q = \C\langle x_0, x_1, x_2\rangle / (x_1x_0 -q_{1,0}x_0x_1, x_2x_0 -q_{2,0} x_0x_2, x_2x_1-q_{2,1} x_1x_2)\]
and  \[ \cA^1_{\q^\prime} = \C\langle y_0 y_1\rangle / (y_1y_0 -q^\prime y_0y_1).\]
Then $N =5$ and the corresponding $(2,1)$-derived quantum space is determined by the following data:
 \[\begin{array}{l}
 Z= \{z_{00}, z_{01}, z_{10}, z_{11}, z_{20}, z_{21} \}, \quad
 \g= \begin{pmatrix}
1 & (q^\prime)^{-1} & q_{10}^{-1} & (q_{10}q^\prime)^{-1} & q_{20}^{-1} & (q_{20}q^\prime)^{-1}\\
q^{\prime} & 1 & q_{10}^{-1} q^{\prime} & q_{10}^{-1} & q_{20}^{-1} q^{\prime} & q_{20}^{-1}\\
q_{10} & q_{10}(q^\prime)^{-1} & 1 & (q^\prime)^{-1} & q_{21}^{-1} & (q_{21}q^\prime)^{-1} \\
q_{10}q^{\prime} & q_{10} & q^{\prime} & 1 & q_{21}^{-1}q^{\prime} & q_{21}^{-1}\\
q_{20} & q_{20}(q^\prime)^{-1} & q_{21} & q_{21}(q^\prime)^{-1} & 1 & (q^{\prime})^{-1}\\
q_{20}q^\prime & q_{20} & q_{21} q^\prime & q_{21} & q^{\prime} & 1 \\
\end{pmatrix}.
 \end{array}
 \]
 The kernel $\ker (s_{2,1})$ of the Segre map $s_{2,1} : \cA^5_{\g} \to \cA^2_{\q} \otimes \cA^1_{\q^\prime}$ has a reduced Gr\"obner basis consisting of three quadratic polynomials
\[ G= \lbrace z_{01}z_{10} - q^\prime z_{00}z_{11}, z_{01}z_{20} - q^\prime z_{00}z_{21}, z_{11}z_{20} - q^\prime z_{10}z_{21} \rbrace.\]
\end{ex}

\end{document}